\def\vgap{\vspace*{.1in}}
\newenvironment{proof}{\par\noindent{\bf Proof\ }}{\hfill\BlackBox\\[2mm]}
\DeclareMathOperator*{\argmax}{argmax}
\DeclareMathOperator*{\argmin}{argmin}
\newcommand{\abs}[1]{\left\lvert #1 \right\rvert}
\newcommand{\norm}[1]{\left\lVert #1 \right\rVert}
\newcommand{\ceil}[1]{\lceil #1 \rceil}
\newcommand{\expec}[1]{\mathbf{E}\left[ #1 \right] }
\newcommand\numberthis{\addtocounter{equation}{1}\tag{\theequation}}  
\def\fn[#1]#2{{f_{#1}\left(x_{#2}\right)}}
\newcommand{\kb}[1]{{\color{red}\bf[KB: #1]}}
\newcommand{\ar}[1]{{\color{teal}\bf[AR: #1]}}
\newtheorem{definition}{Definition}[section]
\newtheorem{lemma}{Lemma}[section]
\newtheorem{theorem}{Theorem}[section]
\newtheorem{proposition}{Proposition}[section]
\newtheorem{assumption}{Assumption}[section]
\newtheorem{remark}{Remark}
\newcommand{\calL}{{\mathcal{L}}}
\newcommand{\calJ}{{\mathcal{J}}}
\newcommand{\calF}{{\mathcal{F}}}
\newcommand{\order}{\ensuremath{\mathcal{O}}}
\newcommand{\calX}{{\mathcal{X}}}
\newcommand{\calY}{{\mathcal{Y}}}
\newcommand{\inner}[1]{ \left\langle {#1} \right\rangle }
\newcommand{\derivx}[1]{\nabla_x f({#1})}
\newcommand{\derivy}[1]{\nabla_y f({#1})}
\newcommand{\wh}{\widehat}
\def\E{{\bf E}}
\def\half{{\textstyle{\frac{1}{2}}}}
\def\cX{{\cal X}}
\def\cY{{\cal Y}}
\def\tx{{\tilde x}}
\def\ty{{\tilde y}}
\def\hx{{\hat x}}
\def\hy{{\hat y}}
\newtheorem{cor}{Corollary}
\newcommand{\dd}{\xi}  
\newcommand{\pp}{x} 
\title{Online and Bandit Algorithms for Nonstationary Stochastic Saddle-Point Optimization}
\author[1]{Abhishek Roy\thanks{abroy@ucdavis.edu}}
\affil[1]{Department of Electrical and Computer Engineering, University of California, Davis}
\author[2]{Yifang Chen\thanks{yifang@usc.edu. Work done during an internship visit to UC Davis}}
\affil[2]{Department of Computer Science, University of Southern California}
\author[3]{Krishnakumar Balasubramanian\thanks{kbala@ucdavis.edu}}
\affil[3]{Department of Statistics, University of California, Davis}
\author[4]{Prasant Mohapatra\thanks{pmohapatra@ucdavis.edu}}
\affil[4]{Department of Computer Science, University of California, Davis}
\begin{document}
\maketitle
\begin{abstract}
Saddle-point optimization problems are an important class of optimization problems with applications to game theory, multi-agent reinforcement learning and machine learning. A majority of the rich literature available for saddle-point optimization has focused on the offline setting. In this paper, we study nonstationary versions of stochastic, smooth, strongly-convex and strongly-concave saddle-point optimization problem, in both online (or first-order) and multi-point bandit (or zeroth-order) settings. We first propose natural notions of regret for such nonstationary saddle-point optimization problems. We then analyze extragradient and Frank-Wolfe algorithms, for the unconstrained and constrained settings respectively, for the above class of nonstationary saddle-point optimization problems. We establish sub-linear regret bounds on the proposed notions of regret in both the online and bandit setting.
\end{abstract}
\newpage
\section{Introduction}  
Sequential decision making problems are usually formulated as solving standard \texttt{argmin}-type convex optimization problems in an online fashion. Specifically, consider a sequence of $d$-dimensional real-valued functions $\{f_t(x) \}_{t=1}^T$ such that $x^*_t = \argmin_{ x \in \mathcal{X}}~f_t\left(x\right):= \E_\xi [F_t (x,\xi)]$, for some closed convex set $\mathcal{X} \subset \mathbb{R}^d$. Due to the sequential or online nature of the problem, in each round $t$, the decision-maker picks a decision $x_t$ and observes the (stochastic) loss suffered, $F_t(x_t, \xi_t)$, as a consequence of picking that decision. The goal in this setting to algorithmically produce a sequence of decisions $x_t$, based on the feedback received, such that the decisions compares favorably against an appropriately defined notion of regret, which is based on a certain oracle decision rule. In the so-called static setting, the oracle decision rule compared against is $\bar{x}^*:=\argmin_{ x \in \mathcal{X}}~\sum_{t=1}^T f_t\left(x\right)$ and the regret is defined $\mathcal{R} = \sum_{t=1}^T f_t(x_t) - \sum_{t=1}^T f_t( \bar{x}^*)$. In the so-called dynamic setting, it is typical to consider the following stronger notion $\mathcal{R} = \sum_{t=1}^T f_t(x_t) - \sum_{t=1}^T f_t( x_t^*)$. Algorithms developed for the above problems are typically called as online convex optimization algorithms in the literature. More recently, extension to structured non-convex functions (for example, sub-modular or quasi-convex functions) and general non-convex functions (with appropriately defined notions of local regrets) have been considered in the literature~\cite{hazan2017efficient, gao2018online, roy2019multi}. A common theme in all the above works is that they are based on the standard \texttt{argmin}-type optimization formulations. 

In this work, we study sequential decision making problems that could naturally be modeled as solving saddle-point optimization problems in an online fashion. Consider a sequence of functions $\{ f_t\left(x,y\right)\}_{t=1}^T$ and a corresponding sequence of points $\{(x^*_t,y^*_t)\}_{t=1}^T$ defined as
\begin{align}\label{eq:nssaddle}
(x^*_t,y^*_t) = \underset{ x \in \mathcal{X}}{\argmin}~\underset{y \in \mathcal{Y}}{\argmax}~f_t\left(x,y\right):= \E_\xi [F_t (x,y,\xi)].
\end{align}
Here, each function $f_t: \mathbb{R}^{d_X + d_Y} \to \mathbb{R}$ and the sets $\calX \subset \mathbb{R}^{d_X}$, $\calY \subset \mathbb{R}^{d_Y}$ are closed and convex. For the case of $T=1$, the above problem is called as offline saddle-point optimization problem in the literature. To solve such offline saddle point optimization problems, iterative algorithms like Gradient Descent Ascent (GDA) and variants, and Frank-Wolfe algorithms have been developed; see, for example,~\cite{korpelevich1976extragradient, rockafellar1976monotone,guler1991convergence, nemirovski2004prox,nedic2009subgradient, gidel2017frank} for a partial overview of such methods. We consider the online (and nonstationary) variant of the saddle-point optimization problem, where $T >1$ and in each iteration of an algorithm, the function being optimized changes. This is a natural extension of the standard online \texttt{argmin}-type optimization problems to the saddle-point optimization setting and was recently also considered in~\cite{2018arXiv180608301R}. In each round $t$, the decision-maker then picks actions $(x_t,y_t)$ and observes potentially noisy function evaluation feedback of the form $F_t(x_t,y_t,\xi_t)$ (or in some cases, also feedbacks of the form $F_i(x_t,y_t, \xi_{t,i})$ for $1\leq i \leq t$) as a consequence of picking the actions. This setting is called as the bandit setting. In some cases, noisy gradient or higher-order derivative information regarding the functions $F_i$, $1\leq i\leq t$ maybe obtained as well. This setting is typically referred to as the online setting. The goal in either setting, is to obtain a sequence of decisions $(x_t,y_t)$, based on the feedbacks obtained, so that the decisions compare favorably against an appropriately defined notion of regret. 

An immediate challenge that arises when trying to formulate the above problem is: How to define a meaningful notion of static and dynamic regret for online saddle-point optimization problems for which efficient algorithms could be designed? In the case of offline convex-concave saddle-point optimization, i.e., when the function $f(\cdot,y)$ is convex for all $y$ and the function $f(x,\cdot)$ is concave for all $x$, the so-called Nash equilibrium solution is a standard criterion to evaluate the performance of any algorithm. A point $(\bar{x},\bar{y})$ is called as the Nash equilibrium, if for all $(x,y) \in \mathcal{X}\times \mathcal{Y}$, it satisfies the condition $f(\bar{x},y) \leq f(\bar{x},\bar{y}) \leq f(x,\bar{y})$. That is, for all $y \in \mathcal{Y}$, $\bar{x}$ minimizes $f(\cdot,y)$ and for all $x\in\mathcal{X}$, $\bar{y}$ maximized $f(x,\cdot)$. Several algorithms exists for efficiently obtaining a point $\epsilon$-close to the Nash equilibrium in the case of offline convex-concave saddle point optimization problem; see, for example~\cite{nemirovski2004prox, Rakhlin2013OptimizationLA}. In the online setting, when we are dealing with static regret, we consider regret notions based on a smoothed version of the above definition of Nash equilibrium; see Definition~\ref{def:ssp} for the exact formulation. This definition of the static regret is motivated by similar notion of smoothed regret for online nonconvex optimization in~\cite{hazan2017efficient} and is also considered in~\cite{2018arXiv180608301R} for the online saddle-point optimization problem. For the case of dynamic regret, we propose natural notions of cumulative regret between the iterates $(x_t,y_t)$ and the points $(x^*_t,y^*_t)$, either in terms of iterates or in terms of function-value at the iterates, under the assumption that function $f_t$ satisfy certain bounded variation conditions; see Definitions~\ref{def:dsppath} and~\ref{def:dsp2} for the exact formulation. Our proposal for the dynamic regret is motivated by similar notions of regret in the online convex and nonconvex optimization setting~\cite{bousquet2002tracking, hall2015online, Besbes_2015,besbes2014stochastic,yang2016tracking,keskin2016chasing, gao2018online, chen2019nonstationary, roy2019multi}. For the proposed notions of static and dynamic regret, we propose and analyze online and bandit variants of extra-gradient method when the sets are unconstrained, i.e., $\mathcal{X}:= \mathbb{R}^{d_X}$ and $\mathcal{Y}:=\mathbb{R}^{d_Y}$. Next, we propose and analyze online and bandit variants of Frank-Wolfe algorithm (designed for saddle-point problems), when the sets $\mathcal{X}\subset \mathbb{R}^{d_X}$, $\mathcal{Y}\subset\mathbb{R}^{d_Y}$ are compact and convex. 

\subsection{Motivating Examples}\label{sec:motiveg}
We now provide our main motivating example (online two-player zero-sum stochastic games) for the type of sequential decision making problems that could be formulated in the form in~\eqref{eq:nssaddle}. Before we proceed, we also highlight that there are several other examples that fall in the framework we consider. We briefly mention them without going into the details. Nonstatioanry saddle point problems also arise when we consider online versions of generative adversarial networks~\cite{grnarova2018an,ge2018fictitious}. Furthermore, by the variational formulation of $l_1$ norm, one could formulate robust versions (where robustness is enforced by considering $l_1$ loss) of online nonparametric prediction~\cite{baby2019online,rakhlin2015online,gaillard2017online} could also be cast in the nonstationary saddle-point formulation we consider. Yet another problem which could be cast in the saddle-point framework is that of maximizing Area under Receiver Operating Characterizing Curves (AUC) in the online setting~\cite{ying2016stochastic}. \\

\noindent\textbf{Online Two-player Zero-sum Stochastic Games:}
One of the main motivating applications for the nonstationary stochastic saddle-point optimization we consider is the problem of two-player zero-sum stochastic games~\cite{mertens1981stochastic, filar2012competitive}. In this setting, we consider two agents, that are characterized by the following Markov Decision Process (MDP) $M$ parametrized by the tuple $(\mathcal{S}, \mathcal{A}_1, \mathcal{A}_2, \mathcal{P}, c)$. Here, $\mathcal{S} \subset \mathbb{R}^b$ is the state space of the MDP. The sets $\mathcal{A}_1 \subset \mathbb{R}^{p_1}$ and $\mathcal{A}_2 \subset \mathbb{R}^{p_2}$ denotes the action space of agent 1 and agent 2 respectively. Furthermore, $\mathcal{P}: \mathcal{S} \times  \mathcal{S} \times \mathcal{A}_1 \times \mathcal{A}_2  \to [0,1]$ denotes be the transition probability kernel and $c(s,a^{(1)},a^{(2)}):\mathcal{S} \times \mathcal{A}_1 \times \mathcal{A}_2 \to \mathbb{R}$ denotes the cost-reward functions corresponding to the actions of agent 1 and agent 2. In the zero-sum setting, the cost-reward  function is setup so that goals of agent 1 and agent 2 are contradictory in nature. For example, the goal of agent 1 is to minimize the cost over time, while the goal of agent 2 is to maximize the cost. This is done by the two agents working with the MDP $M$, at a given time step $t$, by choosing actions $a^{(1)}_t$ and $a^{(2)}_t$ based on data $\{s_i, a^{(1)}_i, a^{(2)}_i, c(s_i,a^{(1)}_i, a^{(2)}_i)\}_{i=1}^{t-1}$ and $s_t$. Based on the actions chosen, the process moves to state $s_{t+1}$ with probability $\mathcal{P}(s_{t+1}|a^{(1)}_t, a^{(2)}_t,s_t)$. To formulate the problem precisely, we introduce the so-called policy function, $\pi_{\theta_1}(a|s) \equiv \pi_{\theta_1}(a,s): \mathcal{A}_1 \times \mathcal{S} \to [0,1]$, which denotes the probability of agent 1 taking action $a$ in state $s$; the policy function for agent 2 is defined similarly is denoted as $\pi_{\theta_2}(a|s)$. Here, $\theta_1 \in \mathbb{R}^{d_1}$ and $\theta_2 \in \mathbb{R}^{d_2}$ are the parameter vectors of the policy function $\pi_{\theta_1}$ and $\pi_{\theta_2}$ respectively. Then, the precise formulation of the problem describing the goal of the agent is given by the following offline optimization problem:
\begin{align*}
(\theta^*_1, \theta^*_2) =  \argmin_{\theta_1 \in \Theta_1}~\argmax_{\theta_2 \in \Theta_2} \left\{ J(\theta_1,\theta_2) = \E_s \left[ V_{\theta_1,\theta_2}(s) \right]  = \E_s \left[\E\left(\sum_{i=1}^t c(s_i,a^{(1)}_i,a^{(2)}_i)\bigg|s_1=s  \right) \right]\right\},
\end{align*}
where $a^{(1)}_i \sim \pi_{\theta_1}(\cdot|s_i)$, $a^{(2)}_i \sim \pi_{\theta_2}(\cdot|s_i)$ and $s_{i+1}\sim \mathcal{P}(\cdot|s_i,a^{(1)}_i, a^{(2)}_i)$, for all $1\leq i < t$ and $\E_s$ represents the expectation with respect to the (fixed) initial distribution of the states. The quantity $V_{\theta_1,\theta_2}(s)$ is called the value function and is indexed by $(\theta_1,\theta_2)$ to represent the fact that it depends on the policy function $\pi_{\theta_1}$ and $\pi_{\theta_2}$. Naturally, the above problem is an offline saddle-point problem.

In the online nonstationary version of the two-player zero-sum stochastic game~\cite{wei2017online}, there are two significant changes to the above setup, which are motivated by similar changes in single-player MDP~\cite{neu2010online, arora2012online, guan2014online, dick2014online}. First, the cost function $c$ is assumed to change with time and is hence indexed by $c_t$. Next, the interaction protocol of the agent is changed so that in time $t$, receives $s_t$ and selects action $a^{(1)}_t,a^{(2)}_t$ based on which it receives the cost $c_t(s_t, a^{(1)}_t,a^{(2)}_t)$. The probability kernel $\mathcal{P}$ is typically assumed to be known in Online MDP problems~\cite{neu2010online, dick2014online}. The goal in online nonstationary MDP is to come up with a sequence of policies $\pi_{\theta^*_{1,t}}, \pi_{\theta^*_{2,t}}$ to minimize an appropriately defined notion of static or dynamic (nonstationary) regret. This falls under the category of sequential decision making problems as described in~\eqref{eq:nssaddle}. When the policies are chosen based on the logistic regression model, the problem becomes a sequential strongly-convex and strongly-concave saddle-point optimization problem and our results in Section~\ref{sec:egmethod} and~\ref{sec:dynamicsection} could potentially be applied to obtain the corresponding regret bounds for online two-player zero-sum stochastic games.

\subsection{Related Work}
\textbf{Offline Saddle-Point Optimization:} Offline saddle-point optimization problems have a long history in the mathematical programming and operations research community. The celebrated extragradient method was proposed in~\cite{korpelevich1976extragradient} and consequently analyzed by~\cite{tseng1995linear,flaam1996equilibrium, facchinei2007finite} for the case of bilinear objectives and strongly-convex and strongly-concave objectives. Generalizing the extragradient method,~\cite{nemirovski2004prox} proposed and analyzed the mirror-prox method for the smooth convex and concave objectives, which was also later analyzed by~\cite{monteiro2010complexity}. A sub-gradient based algorithms was proposed and analyzed in~\cite{nedic2009subgradient} to handle non-smooth objectives. A unified view of extragradient and proximal point method was provided in~\cite{mokhtari2019unified} and a stochastic version of offline saddle-point problems was considered in~\cite{palaniappan2016stochastic}. Frank-Wolfe algorithm for saddle-point optimization was analyzed in~\cite{gidel2017frank}, where it was noted that the first use of Frank-Wolfe algorithm for saddle-point optimization was in~\cite{hammond1984solving}. Recently, there has been an ever-growing interest in analyzing the case of nonconvex-nonconcave objectives, motivated by its applications to training generative adversarial networks. Several works, for example,~\cite{daskalakis2017training, rafique2018non,nouiehed2019solving, sanjabi2018convergence,  flokas2019poincar, lin2019gradient, jin2019minmax, thekumparampil2019efficient}, proposed and analyzed variants of gradient descent ascent for nonconvex-concave objectives and nonconvex-nonconcave objectives.

In the learning theory community, an alternative approach for offline saddle-point optimization problems has been considered. This approach involves using an online convex optimization algorithm for performing offline saddle-point optimization; see, for example~\cite{daskalakis2011near, Syrgkanis2015FastCO,Rakhlin2013OptimizationLA, CesaBianchi2006PredictionLA,Abernethy2017OnFA, Bailey2018MultiplicativeWU} for more details on this approach. In particular,~\cite{Abernethy2017OnFA} established connections between online Frank-Wolfe algorithms and offline saddle-point problems. The developed approaches in the learning theory community compares favorably to the optimal algorithm developed in the mathematical programming and operations research community (for example,~\cite{nemirovski2004prox}).

\vgap
\noindent \textbf{Online Saddle-Point Optimization:} The literature on online saddle-point optimization is extremely limited. Considering the case of deterministic bilinear saddle-point problems, i.e., the case when the function $f_t(x,y):= x^\top A_t y$,~\cite{Cardoso2019CompetingAE} proposed and analyzed algorithms for competing against a notion of static regret. Such bilinear problems arise in online bandit learning problems with knapsack constraint~\cite{Immorlica2018AdversarialBW}. Furthermore~\cite{2018arXiv180608301R} considered online saddle-point problems in strongly-convex and strongly-concave setting and provided regret bounds for a similar notion of static regret as in~\cite{Cardoso2019CompetingAE}. We emphasize that both~\cite{Immorlica2018AdversarialBW} and~\cite{2018arXiv180608301R} only considered the static setting and assumed access to an oracle that computes exact maximization and minimization of convex and concave functions respectively and did not analyze iterative algorithms, as we do in this work.  

\subsection{Our Contributions}
In this work, we consider nonstationary version of stochastic saddle-point optimization problems, in the online and bandit setting and make the following contributions:
\begin{enumerate}
\item We propose natural notions of static regret (Definitions~\ref{def:ssp}) and dynamic regret (Definitions~\ref{def:dsppath} and~\ref{def:dsp2}) that are suited for nonstationary saddle-point optimization problems.  
\item We analyze online and bandit versions of extragradient method for the unconstrained setting and provide bounds for both the static and dynamic regret in Theorem~\ref{th:rsspcor} and~\ref{th:dynamicexgrad} respectively.
\item Next, for the constrained setting, we analyze online and bandit versions of saddle-point Frank-Wolfe method and provide bounds for both static and dynamic regret in Theorem~\ref{th:static_ol_firstOrder} and~\ref{thm:fwdynamicfuncreg} respectively.
\item In the process of establishing the above mentioned result, we also analyze offline zeroth-order saddle-point Frank-Wolfe method and provide results for obtaining $\epsilon$-Nash equilibrium solution in Theorem~\ref{theo:offl_nonadapt}. 
\item Finally, we also consider online and bandit version of gradient descent ascent algorithm for nonstationary saddle-point optimization problem and establish regret bounds for a weaker notion of dynamic regret (Definition~\ref{def:dsp}) in Theorem~\ref{th:absoutsum}. This demonstrates the drawback of gradient descent ascent algorithm for nonstationary saddle-point optimization problems. 
\end{enumerate}
To the best of our knowledge, our results provide the first static and dynamic regret bounds for nonstationary saddle-point optimization problems of the form in~\eqref{eq:nssaddle}.
\subsection{Preliminaries}
\noindent\textbf{Notations:} For a vector $u \in \mathbb{R}^d$, $\|u\|$ always denotes the standard $\ell_2$ norm, unless specified otherwise. For a function $f(x,y)$, we denote by $\nabla_x f(x,y)$ and $\nabla_y f(x,y)$ the partial derivative of $f(x,y)$ with respect to $x$ and $y$ respectively. Throughout the paper, we use $x[y]$ to denote a fact that holds for both variables $x$ and $y$. For example, $\|\nabla_{x[y]} f(x,y)\| \leq B_{x[y]}$ for some $ B_{x[y]} > 0$ means $\|\nabla_{x} f(x,y)\| \leq B_{x}$ and $\|\nabla_{y} f(x,y)\| \leq B_{y}$ for some constants $B_x,B_y > 0$. We will denote the filtration generated up to the $t^{th}$ iteration of any Algorithm in this paper by $\calF_t$. Next we provide the precise assumptions we make on the functions $F_t$ and the different notions of regret we consider. We first start with several regularity assumptions on the function $F_t$. 
\begin{assumption}[\bfseries Strongly-Convex and Strongly-Concave Function]  \label{as:strongconcon}
The objective functions $F_t\left(x,y,\xi\right)$ are continuously differentiable in $x$, and $y$. Moreover, the functions $F_t\left(x,y,\xi\right)$ are $\mu_X$-strongly convex in $x$, and $\mu_Y$-strongly concave in $y$. We also define $\mu:=\min \left(\mu_X,\mu_Y\right)$. 
\end{assumption}
\begin{assumption}[Lipschitz Function] \label{as:lip}
The functions $F_t$ are $L_X$-Lipschitz w.r.t $x$, and $L_Y$-Lipschitz w.r.t $y$, i.e., almost surely, we have $|F_t\left(x_1,y,\xi \right)- F_t\left(x_2,y,\xi \right)| \leq L_X\norm{x_1-x_2}$, and $|F_t\left(x,y_1,\xi \right)- F_t\left(x,y_2,\xi \right)| \leq L_Y\norm{y_1-y_2}$. We also define $L:=\max\left(L_X,L_Y\right)$.
\end{assumption}
\begin{assumption}[\bfseries Lipschitz Gradient]  \label{as:lipgrad}
The functions $F_t$ have Lipschitz continuous gradient w.r.t $x$, and $y$, i.e., almost surely, we have $\norm{\nabla_x F_t\left( x_1,y,\xi\right) -\nabla_x F_t\left( x_2,y,\xi\right)}\leq L_{GX}\|x_1-x_2\|$, for all $y$ and , $\norm{\nabla_y F_t\left( x,y_1,\xi\right) -\nabla_y F_t\left( x,y_2,\xi\right)}\leq L_{GY}\|y_1-y_2\|$ for all $x$. Similar to before, we define $L_G:=\max \left(L_{GX},L_{GY}\right)$.
\end{assumption}
\begin{assumption}[\bfseries Lipschitz Partial Gradient]  \label{as:crosslipgrad}
The functions $F_t$ have Lipschitz continuous partial gradient w.r.t $x$, and $y$, i.e., almost surely, we have $\norm{\nabla_x F_t\left( x,y_1,\xi\right) -\nabla_x F_t\left( x,y_2,\xi\right)}\leq L_{XY}\|y_1-y_2\|$, for all $x$ and , $\norm{\nabla_y F_t\left( x_1,y,\xi\right) -\nabla_y F_t\left( x_2,y,\xi\right)}\leq L_{YX}\|x_1-x_2\|$, for all $y$. 
\end{assumption}
\begin{assumption}[\bfseries Bounded Third-order Derivative]  \label{as:liphess}
The functions $F_t$ have bounded third order partial derivative, i.e., almost surely, we have
\begin{align*}
\max\left(\|\nabla^3_{xxx}F_t\|,\|\nabla^3_{xxy}F_t\|,\|\nabla^3_{xyx}F_t\|,\|\nabla^3_{yxx}F_t\|,\|\nabla^3_{yxy}F_t\|,\|\nabla^3_{xyy}F_t\|,\|\nabla^3_{yyx}F_t\|,\|\nabla^3_{yyy}F_t\|\right)\leq L_H,
\end{align*} 
where $\| \cdot \|$ represents the tensor operator norm; see, for example,~\cite{waterhouse1990absolute} for the definition.
\end{assumption}
We now state the assumptions on the oracles used in this work. The stochastic zeroth-order oracle, where we only observe noisy unbiased function evaluations as feedback, is used in the bandit setting. The stochastic first-order oracle, where one could also obtain noisy unbiased estimators of the gradients, is used in the online setting. 
\begin{assumption}[Stochastic Zeroth-order Oracle]\label{as:stoch}
For any $x \in \mathbb{R}^{d_X}$ and $y \in \mathbb{R}^{d_Y}$, the zeroth-order oracle outputs an estimator $F\left(x,y,\xi\right)$ of $f\left(x,y\right)$ such that such that $\expec{F\left(x,y,\xi\right)}=f\left(x,y\right)$, $\expec{\nabla_{x[y]} F\left(x,y,\xi\right)}=\nabla_{x[y]} f\left(x,y\right)$, and $\expec{\|\nabla_{x[y]} F\left(x,y,\xi\right)-\nabla_{x[y]} f\left(x,y\right)\|^2}\leq \sigma^2$.
\end{assumption}
\begin{assumption}[Stochastic First-order Oracle]\label{as:stoch1} For any $x \in \mathbb{R}^{d_X}$ and $y \in \mathbb{R}^{d_Y}$, the zeroth order oracle outputs an estimator $\nabla_{x[y]} F\left(x,y,\xi\right)$ such that  $\expec{\nabla_{x[y]} F\left(x,y,\xi\right)}=\nabla_{x[y]} f\left(x,y\right)$, with variance bounded as $\expec{\|\nabla_{x[y]} F\left(x,y,\xi\right)-\nabla_{x[y]} f\left(x,y\right)\|^2}\leq \sigma^2$.  
\end{assumption}
We now state the definitions of uncertainty sets capturing the allowed degree of nonstationarity of the functions $f_t$. To do so, first recall the definition of $(x_t^*,y_t^*)$ from~\eqref{eq:nssaddle}. 
\begin{definition}[Optimal Value Variation] \label{def:us3}
For a given $V_T\geq0$, the uncertainty set of functions $\mathcal{M}_T$ is defined as,
\begin{align*}
\mathcal{M}_T(\{f_t\}_{t=1}^T)\stackrel{\tiny\mbox{def}}{=}\left\lbrace \{f_t\}_{t=1}^T:  \sum_{t=1}^{T}\left(\|x_{t+1}^*-x_{t}^*\|^2+\|y_{t+1}^*-y_{t}^*\|^2\right) \leq V_T \right\rbrace. \numberthis \label{eq:us3}
\end{align*}
\end{definition}
\begin{definition}[Optimal Value Variation] \label{def:us3}
For a given $W_T\geq0$, the uncertainty set of functions $\mathcal{D}_T$ is defined as,
\begin{align*}
\mathcal{D}_T(\{f_t\}_{t=1}^T)\stackrel{\tiny\mbox{def}}{=}\left\lbrace \{f_t\}_{t=1}^T:  \sum_{t=1}^{T}\norm{f_t-f_{t+1}}\leq W_T \right\rbrace. \numberthis \label{eq:us4}
\end{align*}
where $\norm{f_t-f_{t+1}}:=\sup_{x,y\in\mathcal{X},\calY}\abs{f_t\left( x,y\right) -f_{t+1}\left( x,y\right)}$.
\end{definition}
The definitions of the above two sets $\mathcal{M}_T$ and $\mathcal{D}_T$ in essence capture the degree of nonstationarity allowed in our problem and are directly motivated by similar assumptions made in the literature on \texttt{argmin}-type online optimization problems~\cite{Besbes_2015, yang2016tracking, gao2018online, roy2019multi}.
As discussed in [BGZ15], the above definitions capture several types of nonstationarity occurring in practice, including continuous changes and discrete shocks. Furthermore, we emphasize that the degree of nonstationarity or uncertainty (captured by $V_T$ or $W_T$) is allowed to change with the horizon $T$.

\section{Algorithms for Nonstationary Saddle-Point Optimization}\label{sec:mainalgos}
We now discuss the extragradient and the Frank-Wolfe algorithms we use for obtaining regret bounds for the nonstationary saddle-point optimization problem in~\eqref{eq:nssaddle}, in Sections~\ref{sec:eggmethod} and~\ref{sec:fwmethod} respectively. We also remark that a natural algorithm for offline saddle-point optimization is the gradient descent ascent algorithm, i.e., alternate between a descent step for the minimization part and an ascent step for the maximization part. It is worth exploring the performance of this algorithm for nonstationary saddle-point optimization problems. In section~\ref{sec:gdadynamic}, we discuss this aspect in detail and highlight the limitations of this approach. Before we proceed, we first discuss the gradient estimators used in the first and zeroth-order setting.

\subsection{Gradient Estimator}
Both extragradient method and Frank-Wolfe method (discussed later in in Algorithm~\ref{alg:EG} and~\ref{alg:FWdyn} respectively) are gradient-based algorithms. When used to solve the nonstationary saddle-point optimization problem in~\eqref{eq:nssaddle}, we require a mini-batch gradient estimators as described in Algorithm~\ref{alg:gradest}. Specifically, the mini-batch gradient estimators used changes depending on if we are bounding static and dynamic notions of regrets, and depending on the availability of stochastic first and zeroth-order oracle information. The interpretation of the gradient estimators in the first-order setting is straightforward as we just query the stochastic first-order oracle in Assumption~\ref{as:stoch1} to obtain noisy but unbiased gradients. The main remark we make in this setting is about the static regret case, where we consider the gradient of the smoothed function as in~\eqref{eq:smoothedfunction}. In the zeroth-order setting, we only assume availability of the noisy function evaluations (as in Assumption~\ref{as:stoch}). Hence, we use the Gaussian Stein's identity based random gradient estimator, a standard gradient estimator in the zeroth-order optimization literature~\cite{duchi2015optimal, nesterov2017random, 2018arXiv180906474B}. 

We now briefly recap the main idea behind this technique for estimating the gradient of a function from noisy evaluations. Let $ u \sim N(0,I_d)$ be a standard Gaussian random vector. Given a function $f(x):\mathbb{R}^d \to \mathbb{R}$, for some $\nu \in (0,\infty)$ consider the smoothed function $f_\nu(\pp) = \E_u \left[f(\pp+ \nu u) \right]$. Nesterov~\cite{nesterov2017random} has shown that $\nabla f_\nu(\pp) =$
\begin{align*}
 \E_u \left[\frac{f(\pp+\nu u)}{\nu}~u\right] = \E_u \left[ \frac{f(\pp+\nu u) - f(\pp)}{\nu}~u\right]  = \frac{1}{(2\pi)^{d/2}} \int \frac{f(\pp+\nu u) - f(\pp)}{\nu}~u ~e^{-\frac{\|u\|^2}{2}}~du.
\end{align*}
In~\cite{2018arXiv180906474B}, this relation was noted to be just an instantiation of Stein's identity popular in statistics literature.  The above relation implies that we can estimate the gradient of $f_\nu$ by only using evaluations of $f$. In particular, one can define stochastic gradient of $f_\nu(\pp)$ as
$ G(x) = \nu^{-1}(F(\pp+ \nu u, \dd) - F(\pp, \dd))u$,
which is an unbiased estimator of $\nabla f_\nu(\pp)$, i.e.,
$\E_{u,\dd}[G(x)]=\nabla f_\nu(\pp)$. Furthermore,~\cite{nesterov2017random} showed that the gradient $\nabla f_\nu(\pp)$ is not too far from the required gradient $\nabla f(\pp)$. A related idea of using uniform random variables as opposed to Gaussian random variable, have also been considered since the work of~\cite{flaxman2005online} for bandit optimization problems. In the context of saddle-point optimization problems that we consider, we leverage the above approach and construct the partial gradient estimators in Algorithm~\ref{alg:gradest}.
\begin{algorithm}[t!]
	\caption{\texttt{gradest}: Gradient Estimator for Nonstationary Saddle-Point Optimization} \label{alg:gradest}
	\textbf{Input:} $x_t \in \mathbb{R}^{d_X}, y_t \in \mathbb{R}^{d_Y}$, $\nu_{X[Y]}>0$, $m_t^{X[Y]}>0$ 
	\begin{algorithmic}[1]
		\State \textbf{if} (Static Regret)
		\State \quad \textbf{if} (Zeroth-order setting)
		\begin{subequations} \label{eq:zerograddefstatic}
			\begin{align} 
			\begin{split}
			\bar{G}_t^x\left(x_t,y_t\right)=\frac{1}{tm_t^X}\sum_{i=1}^{t}\sum_{j=1}^{m_t^X}\frac{F_i\left(x_t+\nu_X u_{i,j,x},y_t,\xi_{i,j,x}\right)-F_i\left(x_t,y_t,\xi_{i,j,x}\right)}{\nu_X}u_{i,j,x}
			\end{split}\\
			\begin{split}
			\bar{G}_t^y\left(x_t,y_t\right)=\frac{1}{tm_t^Y}\sum_{i=1}^{t}\sum_{j=1}^{m_t^Y}\frac{F_i\left(x_t,y_t+\nu_Y u_{i,j,y},\xi_{i,j,y}\right)-F_i\left(x_t,y_t,\xi_{i,j,y}\right)}{\nu_Y}u_{i,j,y}
			\end{split}
			\end{align}
		\end{subequations} 
		\State \quad \textbf{if} (First-order setting)  
		\begin{align*} 
		\bar{G}_t^{x[y]}\left(x_t,y_t\right)=\frac{1}{tm_t}\sum_{i=1}^{t}\sum_{j=1}^{m_t}\nabla_{x[y]}F_i\left(x_t,y_t,\xi_{i,j}\right)
		\end{align*}
		\State \textbf{else}
		\State \quad \textbf{if} (Zeroth-order setting)
		\begin{subequations} \label{eq:zerograddef}
			\begin{align} 
			\begin{split}
			\bar{G}_t^x\left(x_t,y_t\right)=\frac{1}{m_t^X}\sum_{j=1}^{m_t^X}\frac{F_t\left(x_t+\nu_X u_{j,x},y_t,\xi_{j,x}\right)-F_t\left(x_t,y_t,\xi_{j,x}\right)}{\nu_X}u_{j,x}
			\end{split}\\
			\begin{split}
			\bar{G}_t^y\left(x_t,y_t\right)=\frac{1}{m_t^Y}\sum_{j=1}^{m_t^Y}\frac{F_t\left(x_t,y_t+\nu_Y u_{j,y},\xi_{j,y}\right)-F_t\left(x_t,y_t,\xi_{j,y}\right)}{\nu_Y}u_{j,y}
			\end{split}
			\end{align}
		\end{subequations} 
		\State \quad \textbf{if} (First-order setting)
		\begin{align*} 
		\bar{G}_t^{x[y]}\left(x_t,y_t\right)=\frac{1}{m_t}\sum_{j=1}^{m_t}\nabla_{x[y]}F_t\left(x_t,y_t,\xi_{j}\right)
		\end{align*}
		\State \textbf{end if}
		\State \textbf{Output:} $\left[\bar{G}_t^x\left(x_t,y_t\right);\bar{G}_t^x\left(x_t,y_t\right)\right]$
	\end{algorithmic}	
\end{algorithm}
Similar to the first-order setting, if we are dealing with static regret, we calculate the zeroth-order estimator of smoothed function from~\eqref{eq:smoothedfunction}, as provided in~\eqref{eq:zerograddefstatic}. We end this section with the following results, which are essentially a zeroth-order gradient estimation error result from~\cite{2018arXiv180906474B}, adapted to the saddle-point problem. 
\begin{lemma}[\cite{2018arXiv180906474B}]\label{lm:zerograd}
	Let, Assumptions~\ref{as:lip}, and \ref{as:lipgrad} are true for function $f$, and $\Delta_t^{x[y]}\left(x_t,y_t\right)=\bar{G}_t^{x[y]}\left(x_t,y_t\right)-\nabla_{x[y]}f\left(x_t,y_t\right)$ where $\bar{G}_t^{x[y]}\left(x_t,y_t\right)$ are as defined in \eqref{eq:zerograddef}. Then,
	\begin{subequations}
	\begin{align}
	\begin{split} \label{eq:grad_est_error}
	\expec{\left\lVert\Delta_t^{x[y]}\right\rVert^2}\leq \frac{2\left(d_{X[Y]}+5\right)\left(L^2+\sigma^2\right)}{m_t^{X[Y]}}+\frac{3\nu_{X[Y]}^2}{2}L_G^2\left(d_{X[Y]}+3\right)^3
	\end{split}\\
	\begin{split} \label{eq:gradnormbound}
	 \expec{\left\lVert\bar{G}_t^{x[y]}\right\rVert^2}\leq \frac{\nu_{X[Y]}^2L^2}{2m_t^{X[Y]}}\left(d_{X[Y]}+6\right)^3+\frac{2}{m_t^{X[Y]}}\left(L^2+\sigma^2\right)\left(d_{X[Y]}+4\right).
	\end{split}
	\end{align}
	\end{subequations}
\end{lemma}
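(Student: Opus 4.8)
The plan is to observe that, once one variable is held fixed, the estimator in~\eqref{eq:zerograddef} is exactly the standard mini-batch Gaussian-smoothing zeroth-order gradient estimator, so the lemma reduces to the single-variable result of~\cite{2018arXiv180906474B} applied to the appropriate partial function. Concretely, to bound the $x$-components I would fix $y_t$ and set $\phi(x):=f(x,y_t)$. By Assumption~\ref{as:lip} the map $\phi$ is $L$-Lipschitz, and by Assumption~\ref{as:lipgrad} it has $L_G$-Lipschitz gradient, both in dimension $d_X$. Each summand $g_{j}:=\nu_X^{-1}\left(F_t(x_t+\nu_X u_{j,x},y_t,\xi_{j,x})-F_t(x_t,y_t,\xi_{j,x})\right)u_{j,x}$ is then a single-query Stein estimator, and the $u_{j,x}$ and $\xi_{j,x}$ are independent across $j$, so $\bar G_t^x=\frac{1}{m_t^X}\sum_j g_j$ is an average of i.i.d.\ copies. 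The central fact, recalled from the discussion preceding~\eqref{eq:zerograddef}, is Stein's identity $\expec{g_j}=\nabla_x\phi_{\nu_X}(x_t)$, i.e.\ each query is unbiased for the gradient of the smoothed partial function $\phi_{\nu_X}$.

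For the first bound~\eqref{eq:grad_est_error} I would use the bias--variance decomposition about this smoothed gradient. Since $\expec{\bar G_t^x}=\nabla_x\phi_{\nu_X}(x_t)$, the cross term vanishes and $\expec{\norm{\Delta_t^x}^2}=\expec{\norm{\bar G_t^x-\nabla_x\phi_{\nu_X}}^2}+\norm{\nabla_x\phi_{\nu_X}-\nabla_x\phi}^2$. The first (variance) term equals $\tfrac{1}{m_t^X}$ times the per-sample variance, which I would bound by the per-sample second moment; here the oracle noise enters through Assumption~\ref{as:stoch} (the $\sigma^2$ bound) together with $L$-Lipschitzness, producing the $2(d_X+5)(L^2+\sigma^2)/m_t^X$ term after inserting the Gaussian moment estimates. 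The second (smoothing-bias) term is controlled by the Nesterov--Spokoiny inequality $\norm{\nabla f_\nu-\nabla f}\le \tfrac{\nu}{2}L_G(d+3)^{3/2}$, whose square yields the $\tfrac{3\nu_X^2}{2}L_G^2(d_X+3)^3$ term up to the stated constant.

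For the second bound~\eqref{eq:gradnormbound} I would bound the raw second moment of the estimator directly, again reducing to the per-query second moment via the i.i.d.\ structure, and replacing $\norm{\nabla_x\phi}$ by its uniform bound $L$ coming from Assumption~\ref{as:lip}. This separates into the smoothing/Gaussian-moment contribution $\tfrac{\nu_X^2 L^2}{2m_t^X}(d_X+6)^3$ and the noise/gradient contribution $\tfrac{2}{m_t^X}(L^2+\sigma^2)(d_X+4)$. The $y$-components are handled identically with $x_t$ held fixed, giving the $[y]$ version of every bound; this is precisely the content of the compact $x[y]$ notation.

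The main obstacle is the bookkeeping of the dimension-dependent Gaussian moment bounds: the factors $(d+3)^3$, $(d+4)$, and $(d+6)^3$ arise from estimating moments of the form $\expec{\norm{u}^{2k}}$ for $u\sim N(0,I_d)$ and from a second-order Taylor expansion of $\phi$ that isolates the $L_G$-dependent remainder, and one must verify that the per-query function-evaluation noise $\sigma^2$ from Assumption~\ref{as:stoch} propagates through the difference quotient without accumulating extra $\nu$ factors. Since all of these estimates already appear in~\cite{2018arXiv180906474B}, the essential work is to confirm that the partial functions $f(\cdot,y_t)$ and $f(x_t,\cdot)$ inherit exactly the Lipschitz and Lipschitz-gradient constants required---which they do, by Assumptions~\ref{as:lip} and~\ref{as:lipgrad}---and then to invoke those single-variable estimates in dimensions $d_X$ and $d_Y$.
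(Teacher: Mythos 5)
Your proposal is correct and takes essentially the same route as the paper, which gives no independent proof of Lemma~\ref{lm:zerograd} but imports it from \cite{2018arXiv180906474B} exactly as you describe: viewing $\bar{G}_t^x$ (resp.\ $\bar{G}_t^y$) as the standard mini-batch Gaussian-smoothing estimator for the partial function $f(\cdot,y_t)$ (resp.\ $f(x_t,\cdot)$), which inherits the constants $L$ and $L_G$ in dimension $d_X$ (resp.\ $d_Y$) from Assumptions~\ref{as:lip} and~\ref{as:lipgrad}. Your bias--variance decomposition about the smoothed partial gradient, with the Nesterov--Spokoiny bias bound and the Gaussian-moment estimates supplying the $(d+3)^3$, $(d+4)$, and $(d+6)^3$ factors, is precisely the argument of the cited single-variable result, so nothing further is required.
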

We remark that we will eventually use the result in Lemma~\ref{lm:zerograd} to handle the estimation error of the zeroth-order gradient in~\eqref{eq:zerograddefstatic}.

\subsection{Extragradient Algorithm for Nonstationary Saddle-Point Optimization}\label{sec:eggmethod}
It is well-documented that the simple gradient descent algorithm suffers form several convergence issues even in the case of offline saddle point optimization problems with strongly-convex and strongly-concave objectives~\cite{mokhtari2019unified}. It is also known that in comparison to gradient descent ascent, extragradient algorithm~\cite{korpelevich1976extragradient} has superior performance guarantees for offline saddle-point optimization problems~\cite{tseng1995linear, mokhtari2019unified}. We now discuss a version of Extragradient algorithm, displayed in Algorithm~\ref{alg:EG}, suitable for nonstationary saddle-point optimization problems as in~\eqref{eq:nssaddle}, in the unconstrained setting.
\begin{algorithm}[t]
	\caption{Extragradient Method (\texttt{EG}) for Nonstationary Saddle-Point Optimization} \label{alg:EG}
	\textbf{Input:} $\eta_t$, $x_0\in \mathbb{R}^{d_X}$, $y_0 \in \mathbb{R}^{d_Y}$, $\nu_{X[Y]}>0$
	\begin{algorithmic}[1]
		\State \textbf{for} $t=0,1,\cdots,T-1$ \textbf{do}
		\State \textbf{Set} $\left[\bar{G}_t^x\left(x_t,y_t\right);\bar{G}_t^y\left(x_t,y_t\right)\right]=\texttt{gradest}\left(x_t,y_t,\nu_{X[Y]},m_t^{X[Y]}\right)$ \Comment{ Algorithm~\ref{alg:gradest}}
		\State\textbf{Set} $x_{t+\half}=x_t-\eta_t\bar{G}_t^x\left(x_t,y_t\right)$ \qquad $y_{t+\half}=y_t+\eta_t\bar{G}_t^y\left(x_t,y_t\right)$
		\State \textbf{Set} $\left[\bar{G}_t^x\left(x_{t+\frac{1}{2}},y_{t+\frac{1}{2}}\right);\bar{G}_t^y\left(x_{t+\frac{1}{2}},y_{t+\frac{1}{2}}\right)\right]=\texttt{gradest}\left(x_{t+\frac{1}{2}},y_{t+\frac{1}{2}},\nu_{X[Y]}, m_t^{X[Y]}\right)$ \Comment{Algo~\ref{alg:gradest}}
		\State\textbf{Update} $x_{t+1}=x_t-\eta_t\bar{G}_t^x\left(x_{t+\half},y_{t+\half}\right)$ \qquad $y_{t+1}=y_t+\eta_t\bar{G}_t^y\left(x_{t+\half},y_{t+\half}\right)$
		\State \textbf{end for}
	\end{algorithmic}	
\end{algorithm}
The main idea underlying the extragradient algorithm is the use of the additional gradient descent step, as in step 3 of Algorithm~\ref{alg:EG}. As shown in~\cite{mokhtari2019unified}, this step, when used in the offline setting approximates the computationally prohibitive proximal point method, to the required amount of accuracy, at the same time being practically efficient. In order to leverage this property of extragradient method for nonstationary stochastic saddle-point optimization problems, the main modification which we require is the use of mini-batch gradient estimators (as described in Algorithm~\ref{alg:gradest}) in step 2 and step 4 of Algorithm~\ref{alg:EG}. We show in Section~\ref{sec:staticegmethod} and~\ref{sec:dynamicegmethod}, by selecting the batch sizes appropriately, one could obtain sub-linear regret bounds in both the static and dynamic setting. 

As mentioned above, one way to think of the extragradient method is as an approximation to the proximal point method. In Algorithm~\ref{alg:pp}, we provide a \emph{fictional} algorithm which allows us to connect the iterates of the proximal point method and the extragradient method in the context of nonstationary saddle-point optimization problems of the form~\eqref{eq:nssaddle}. We emphasize that Algorithm~\ref{alg:pp} is only for the purpose of proving our regret bounds in Section~\ref{sec:staticegmethod} and~\ref{sec:dynamicegmethod} later. We now connect the iterate of the proximal point-type method in Algorithm~\ref{alg:pp} and extragradient method in Algorithm~\ref{alg:EG}.

\begin{lemma} \label{lm:stocherrorppeg}
	Let $\left(x_{t+1},y_{t+1}\right)$, $\left(\tx_{t+1},\ty_{t+1}\right)$ and $\left(\hat{x}_{t+1},\hat{y}_{t+1}\right)$ be the updates generated by zeroth-order stochastic extra-gradient method, first-order deterministic extra-gradient method, and proximal point-type method (as in Algorithm~\ref{alg:pp}) respectively from $\left(x_{t},y_{t}\right)$ for a function $f_t$ for which Assumptions~\ref{as:strongconcon}--\ref{as:lipgrad}, and \ref{as:liphess} are true. Then:
	\begin{enumerate}[label=(\alph*)]
		\item In the zeroth-order setting,
		\begin{align}
		\expec{\|x_{t+1}-\hat{x}_{t+1}\|^2|\calF_t}\leq e_{0,t+1,X} \quad \expec{\|y_{t+1}-\hat{y}_{t+1}\|^2|\calF_t}\leq e_{0,t+1,Y},	\label{eq:stocherrorppega}
		\end{align}
		where $e_{0,t+1,{X[Y]}}=4\left(\eta_t^2+L_G^2\eta_t^4\right)\left(\frac{2\left(d_{X[Y]}+5\right)\left(L^2+\sigma^2\right)}{m_t^{X[Y]}}+\frac{3\nu_{X[Y]}^2}{2}L_G^2\left(d_{X[Y]}+3\right)^3\right)+2L_H ^2\eta_t^6$.
		\item In the first-order stochastic setting,
		\begin{align}
		\expec{\|x_{t+1}-\hat{x}_{t+1}\|^2|\calF_t}\leq e_{1,t+1} \quad \expec{\|y_{t+1}-\hat{y}_{t+1}\|^2|\calF_t}\leq e_{1,t+1}, \label{eq:stocherrorppegb}
		\end{align}
		where $e_{1,t+1}=4\left(\eta_t^2+L_G^2\eta_t^4\right)\frac{\sigma^2}{m_t}+2L_H ^2\eta_t^6$.
	\end{enumerate}
\end{lemma}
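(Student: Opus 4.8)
The plan is to introduce the deterministic first-order extragradient iterate $(\tx_{t+1},\ty_{t+1})$ as a bridge and split the total deviation into a \emph{stochastic} part and a \emph{discretization} part. Concretely, for each coordinate block I would write
\begin{align*}
\expec{\|x_{t+1}-\hat{x}_{t+1}\|^2\mid\calF_t}\leq 2\,\expec{\|x_{t+1}-\tx_{t+1}\|^2\mid\calF_t}+2\,\|\tx_{t+1}-\hat{x}_{t+1}\|^2,
\end{align*}
and analogously for $y$. The first summand measures how far the stochastic (zeroth- or first-order) extragradient step strays from the deterministic extragradient step driven by exact gradients, and will produce the estimation term $4(\eta_t^2+L_G^2\eta_t^4)(\cdots)$; the second summand is deterministic and captures how well extragradient approximates the implicit proximal-point step, producing the $2L_H^2\eta_t^6$ term.

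For the discretization part I would follow the unified extragradient/proximal-point viewpoint of~\cite{mokhtari2019unified}. Writing the deterministic extragradient update as $\tx_{t+\frac12}=x_t-\eta_t\nabla_x f_t(x_t,y_t)$ followed by $\tx_{t+1}=x_t-\eta_t\nabla_x f_t(\tx_{t+\frac12},\ty_{t+\frac12})$, and the proximal-point update of Algorithm~\ref{alg:pp} as the implicit step $\hat{x}_{t+1}=x_t-\eta_t\nabla_x f_t(\hat{x}_{t+1},\hat{y}_{t+1})$ (and symmetrically for $y$), the two agree to first order in $\eta_t$ because $\tx_{t+\frac12}$ is exactly the explicit Euler predictor for the implicit point $\hat{x}_{t+1}$. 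Expanding $\nabla f_t$ at $(\hat{x}_{t+1},\hat{y}_{t+1})$ with a second-order Taylor remainder and using $\|\tx_{t+\frac12}-\hat{x}_{t+1}\|,\|\ty_{t+\frac12}-\hat{y}_{t+1}\|=\order(\eta_t^2)$, the linear terms cancel and the remainder is controlled by the bounded third-order derivative of Assumption~\ref{as:liphess}, giving $\|\tx_{t+1}-\hat{x}_{t+1}\|,\|\ty_{t+1}-\hat{y}_{t+1}\|=\order(L_H\eta_t^3)$ and hence the $L_H^2\eta_t^6$ contribution (the factor $2$ from the split above). This Taylor-remainder bookkeeping, keeping careful track of the cross partial derivatives so that the min/max (saddle) structure does not spoil the cancellation, is the step I expect to be the main obstacle.

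For the stochastic part I would decompose the predictor-level error further as
\begin{align*}
x_{t+1}-\tx_{t+1}=-\eta_t\Big(\underbrace{\bar{G}_t^x(x_{t+\frac12},y_{t+\frac12})-\nabla_x f_t(x_{t+\frac12},y_{t+\frac12})}_{\text{estimation error at the predictor}}+\underbrace{\nabla_x f_t(x_{t+\frac12},y_{t+\frac12})-\nabla_x f_t(\tx_{t+\frac12},\ty_{t+\frac12})}_{\text{propagated predictor mismatch}}\Big),
\end{align*}
apply $\|a+b\|^2\le 2\|a\|^2+2\|b\|^2$, and bound the two pieces separately. The first piece is exactly $\Delta_t^x$ evaluated at the predictor, which is conditionally fixed since fresh samples are drawn in step~4; taking $\expec{\cdot\mid\calF_t}$ and invoking Lemma~\ref{lm:zerograd} in the zeroth-order case (or the variance bound of Assumption~\ref{as:stoch1} in the first-order case) gives the $\order(\eta_t^2)$-weighted estimation error. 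For the second piece, the Lipschitz-gradient Assumptions~\ref{as:lipgrad}--\ref{as:crosslipgrad} bound it by $L_G$ times $\|x_{t+\frac12}-\tx_{t+\frac12}\|+\|y_{t+\frac12}-\ty_{t+\frac12}\|$, and since $x_{t+\frac12}-\tx_{t+\frac12}=-\eta_t\Delta_t^x$ and $y_{t+\frac12}-\ty_{t+\frac12}=\eta_t\Delta_t^y$ at the base point, this contributes an $\order(L_G^2\eta_t^4)$-weighted estimation error. Assembling the weights $4\eta_t^2$ and $4L_G^2\eta_t^4$ after the overall factor-$2$ split, substituting the explicit bound of Lemma~\ref{lm:zerograd} for $\expec{\|\Delta_t^{x[y]}\|^2}$ in the zeroth-order case and $\sigma^2/m_t$ in the first-order case, and adding the $2L_H^2\eta_t^6$ discretization term, yields the stated $e_{0,t+1,X[Y]}$ and $e_{1,t+1}$.
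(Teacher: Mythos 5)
Your proposal is correct and follows essentially the same route as the paper's proof: the same bridge through the deterministic extragradient iterate $(\tx_{t+1},\ty_{t+1})$ with a factor-$2$ split, the same decomposition of $x_{t+1}-\tx_{t+1}$ into the estimation error at the predictor plus the $L_G$-propagated base-point mismatch (giving the $4\eta_t^2$ and $4L_G^2\eta_t^4$ weights), Lemma~\ref{lm:zerograd} (resp.\ Assumption~\ref{as:stoch1}) for the stochastic terms, and the extragradient--proximal-point proximity of \cite{mokhtari2019unified} sharpened to $L_H\eta_t^3$ via Assumption~\ref{as:liphess} for the $2L_H^2\eta_t^6$ term. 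The only difference is cosmetic: you derive the Taylor-remainder bound that the paper simply cites as Proposition 2 of \cite{mokhtari2019unified}.
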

\begin{proof}[Proof of Lemma~\ref{lm:stocherrorppeg}] First note that we have
 \begin{align*}
	&~\|x_{t+1}-\tilde{x}_{t+1}\|\\
	=&~\eta_t\norm{\bar{G}_t\left(x_{t+\half},y_{t+\half}\right)-\nabla_xf\left(\tilde{x}_{t+\half},\tilde{y}_{t+\half}\right)}\\
	\leq&~\eta_t\norm{\bar{G}_t\left(x_{t+\half},y_{t+\half}\right)-\nabla_xf\left({x}_{t+\half},{y}_{t+\half}\right)}+\eta_t\norm{\nabla_xf\left({x}_{t+\half},{y}_{t+\half}\right)-\nabla_xf\left(\tilde{x}_{t+\half},\tilde{y}_{t+\half}\right)}\\
	\leq &~\eta_t\norm{\bar{G}_t\left(x_{t+\half},y_{t+\half}\right)-\nabla_xf\left({x}_{t+\half},{y}_{t+\half}\right)}+L_G\eta_t^2\norm{\bar{G}_t\left(x_t,y_t\right)-\nabla_xf\left(x_t,y_t\right)}.
	\end{align*}
	Hence, we obtain 
	\begin{align*}
	\|x_{t+1}-\tilde{x}_{t+1}\|^2&\leq 2\eta_t^2\norm{\bar{G}_t\left(x_{t+\half},y_{t+\half}\right)-\nabla_xf\left({x}_{t+\half},{y}_{t+\half}\right)}^2\\
	&~~~+2L_G^2\eta_t^4\norm{\bar{G}_t\left(x_t,y_t\right)-\nabla_xf\left(x_t,y_t\right)}^2.
	\end{align*}
	Now we invoke the following result from \cite{mokhtari2019unified}.
	\begin{proposition}[Proposition 2 in \cite{mokhtari2019unified}] \label{prop:mokh}
	Given a point $(x_t, y_t)$, let $\left(\hx_{t+1}, \hy_{t+1}\right)$ be the point we obtain by performing the Proximal Point update on $(x_t, y_t)$, and let $\left(x_{t+1}, y_{t+1}\right)$ be the point we obtain by performing the Extragradeint update on $\left(x_{t},y_{t}\right)$. Then, for a given stepsize $\eta_t > 0$ we have
	\begin{align*}
	    \|x_{t+1}-\hx_{t+1}\|\leq o(\eta_t^2)\quad \|y_{t+1}-\hy_{t+1}\|\leq o(\eta_t^2)
	\end{align*}
	\end{proposition}
	Using Proposition~\ref{prop:mokh}, and Assumption~\ref{as:liphess} we get,
	\begin{align}
	\|\tilde{x}_{t+1}-\hat{x}_{t+1}\|^2\leq & L_H^2\eta_t^6 \label{eq:lmstocherrorppegc}\\
	\expec{\|x_{t+1}-\hat{x}_{t+1}\|^2|\calF_t}\leq& 2\left(\expec{\|x_{t+1}-\tilde{x}_{t+1}\|^2|\calF_t}+\expec{\|\tilde{x}_{t+1}-\hat{x}_{t+1}\|^2|\calF_t}\right). \label{eq:lmstocherrorppegd}
	\end{align}
In order to complete the proof, we consider the zeroth-order and first-order setting separately:
	\begin{enumerate}[label=(\alph*)]
    \item In the zeroth-order stochastic setting, using \eqref{eq:grad_est_error} we have,
	\begin{align}
	\expec{\|x_{t+1}-\tilde{x}_{t+1}\|^2|\calF_t}\leq 2\left(\eta_t^2+L_G^2\eta_t^4\right)\left(\frac{2\left(d_{X[Y]}+5\right)\left(L^2+\sigma^2\right)}{m_t^{X[Y]}}+\frac{3\nu_{X[Y]}^2}{2}L_G^2\left(d_{X[Y]}+3\right)^3\right) \label{eq:lmstocherrorppega}
	\end{align}
	Combining \eqref{eq:lmstocherrorppegc}, \eqref{eq:lmstocherrorppegd}, and \eqref{eq:lmstocherrorppega}, we get \eqref{eq:stocherrorppega}.
	\item In the first-order stochastic setting, using \eqref{eq:grad_est_error}, we get
	\begin{align}
	\expec{\|x_{t+1}-\tilde{x}_{t+1}\|^2|\calF_t}\leq 2\left(\eta_t^2+L_G^2\eta_t^4\right)\frac{\sigma^2}{m_t} \label{eq:lmstocherrorppegb}
	\end{align}
    Combining \eqref{eq:lmstocherrorppegc}, \eqref{eq:lmstocherrorppegd}, and \eqref{eq:lmstocherrorppegb}, we get \eqref{eq:stocherrorppegb}.
\end{enumerate}
\end{proof}
\begin{remark} 
Although Lemma~\ref{lm:stocherrorppeg}, is proved for the sequence of functions $\lbrace f_t\rbrace_{t=1}^{T}$, by a similar proof, the same result could be shown to hold for the sequence of functions $\lbrace \calJ_t\rbrace_{t=1}^{T}$ (as they satisfy Assumptions~\ref{as:strongconcon}--\ref{as:lipgrad}, and \ref{as:liphess}). 
\end{remark}
\begin{algorithm}[t]
	\caption{Proximal Point-Type Method} \label{alg:pp}
	\textbf{Input:} $\eta_t$,$x_0 \in \mathbb{R}^{d_X} , y_0 \in \mathbb{R}^{d_Y}$ 
	\begin{algorithmic}[1]
		\State \textbf{for} $t=0,2,\cdots,T-1$ \textbf{do}
		\State \textbf{if} (Regret=Static)
		\State\textbf{Update} $\hx_{t+1}=x_t-\eta_t\nabla_x \calJ_t\left(\hx_{t+1},\hy_{t+1}\right)$ \qquad $\hy_{t+1}=y_t+\eta_t\nabla_y \calJ_t\left(\hx_{t+1},\hy_{t+1}\right)$ 
		\State \textbf{else}
		\State\textbf{Update} $\hat{x}_{t+1}=x_t-\eta_t\nabla_x f_t\left(\hat{x}_{t+1},\hat{y}_{t+1}\right)$ \qquad $\hat{y}_{t+1}=y_t+\eta_t\nabla_y f_t\left(\hat{x}_{t+1},\hat{y}_{t+1}\right)$
		\State \textbf{end if}
		\State \textbf{end for}
	\end{algorithmic}	
\end{algorithm}

\subsection{Frank-Wolfe Algorithm for Nonstationary Saddle-Point Optimization}\label{sec:fwmethod}
We also analyze Frank-Wolfe algorithms for nonstationary saddle-point optimization when the problem in~\eqref{eq:nssaddle} is constrained and the sets $\mathcal{X}$ and $\mathcal{Y}$ are compact and convex subsets of $\mathbb{R}^{d_X}$ and $\mathbb{R}^{d_Y}$ respectively. Although proposed as early as 1950s by~\cite{fwold}, it has regained interest in the machine learning and optimization communities due to its wide applicability; we refer the reader to the recent survey~\cite{fwsurvey} for more details. For the case of deterministic offline saddle-point optimization problems,~\cite{gidel2017frank} provided an analysis of Frank-Wolfe algorithm in terms of convergence to the saddle-point. In Algorithm~\ref{alg:FWdyn}, we provide a version of Frank-Wolfe algorithm suitable for nonstationary stochastic saddle-point optimization. Similar to the extragradient method in Algorithm~\ref{alg:EG}, we require mini-batch gradient estimators to handle the stochastic settings we consider. In Section~\ref{sec:statfuncvalfw} and~\ref{sec:dynfuncvalfw}, we show that by appropriately selecting the batch size, one could obtain sub-linear regret bounds in both static and dynamic settings, under appropriate assumptions. 

\begin{algorithm}[t]
	\caption{Frank Wolfe Method (\texttt{FW}) for Nonstationary Saddle-Point Optimization} \label{alg:FWdyn}
	\textbf{Input:} $\gamma_t, x_0 \in \mathbb{R}^{d_X}, y_0 \in \mathbb{R}^{d_Y}$, $\nu_{X[Y]}>0$
	\begin{algorithmic}[1]
		\State \textbf{for} $t=0,2,\cdots,T-1$ \textbf{do}
	\State \textbf{Set} $\left[\bar{G}_t^x\left(x_t,y_t\right);\bar{G}_t^y\left(x_t,y_t\right)\right]=\texttt{gradest}\left(x_t,y_t,\nu_{X[Y]},m_t^{X[Y]}\right)$\Comment{ Algorithm~\ref{alg:gradest}}
		\State\textbf{Define} $r_t=\begin{pmatrix}\bar{G}_t^x\left(x_t,y_t\right)\\-\bar{G}_t^y\left(x_t,y_t\right)\end{pmatrix}$
		\State\textbf{Calculate} $s_t=\argmin_{z\in\calX \times \calY}\langle z,d_t\rangle$
		\State\textbf{Update}
		\begin{align}
		    z_{t+1}=\left(1-\gamma_t\right)z_t+\gamma_t s_t \label{eq:alg2update}
		\end{align}
		\State \textbf{end for}
	\end{algorithmic}	
\end{algorithm}

\subsubsection{Zeroth-Order Offline Saddle Point Optimization}
Recall that~\cite{gidel2017frank} analyzed Frank-Wolfe algorithm for deterministic and offline saddle-point optimization problems.  For our analysis Algorithm~\ref{alg:FWdyn} in Section~\ref{sec:statfuncvalfw} and~\ref{sec:dynfuncvalfw}, we require an understanding of the stochastic zeroth-order version of the Frank-Wolfe algorithm from~\cite{gidel2017frank}. The corresponding stochastic zeroth-order Frank-Wolfe algorithm for offline saddle-point optimization is presented in Algorithm~\ref{alg:zeroordspfw}. In Theorem~\ref{theo:offl_nonadapt}, we provide the oracle complexity of this algorithm, extending the results of Frank-Wolfe method in~\cite{gidel2017frank} for deterministic saddle-point optimization problems and zeroth-order stochastic Frank-Wolfe algorithms in~\cite{2018arXiv180906474B} for regular convex and nonconvex optimization problems. This analysis would be useful later on to analyze the online setting. 

We start with a few notations. Consider the saddle-point optimization problem in~\eqref{eq:nssaddle} in the offline setting (i.e., T=1) with $\mathcal{X}$ and $\mathcal{Y}$ being closed and convex. In this case, we denote the function $f_1$ as just $f$. We assume that the saddle point $(x^*, y^*)$ belongs to the interior of $\calX \times \calY$. We also define the following notion of border distance, standard in the analysis of Frank-Wolfe algorithms~\cite{gidel2017frank}: $\delta_\calX:= \min_{s\in \partial \calX}\norm{x^* - s}$ and $\delta_\calY:= \min_{s\in \partial \calY}\norm{y^* - s}$, where $\partial \calX$ and $\partial \calY$ are the boundaries of convex set $\calX,\calY$, i.e., $\partial \calX = \textsc{closure}(\mathcal{X})\backslash \textsc{interior}(\mathcal{X})$ and similarly for $\calY$. We also assume $\calX,\calY$ are bounded, i.e., $\sup_{x,x'\in \calX}\norm{x-x'}\leq D_\calX$ and $\sup_{y,y'\in \calY}\norm{y-y'}\leq D_\calY$. The zeroth-order Frank-Wolfe algorithm is stated in Algorithm~\ref{algo:offline_SPFW}. The main difference between the offline deterministic Frank-Wolfe algorithm in~\cite{gidel2017frank} and our Algorithm~\ref{algo:offline_SPFW} is that the use of zeroth-order random gradient estimators leads to biased estimates of gradients, which needs to be handled differently in our analysis. Note also that, there are two different choices of step-size parameter $\gamma_k$ in Algorithm~\ref{algo:offline_SPFW}. For the case of varying step-size choice, we output a random iteration which is crucial for obtaining our results. Before we proceed with our main result, we introduce few more definitions that are standard in analysis of Frank-Wolfe algorithms. 

\begin{definition}\label{def:fwgaps}
For any iteration $k \geq 1$, the Frank-Wolfe gap corresponding to the minimization part ($x$) and maximization part ($y$) is defined as
\begin{align*}
    &\wh{g}_k^x = -\inner{\derivx{x_{k},y_k},\wh{s}_{k+1}^x-x_k}\\
    &\wh{g}_k^y = -\inner{ -\derivy{x_{k},y_k}, \wh{s}_{k+1}^y - y_k}\\
    &\wh{g}_k = \wh{g}_k^x + \wh{g}_k^y 
\end{align*}
where $\wh{s}_{k+1}^x = \argmin_u \inner{u, \derivx{x_{k},y_k}} \text{ and } \wh{s}_{k+1}^y = \argmax_u \inner{u, \derivy{x_{k},y_k}}$. Furthermore, the merit function, following~\cite{gidel2017frank}, is defined as $w_k:=w_k^x+w_k^y= f(x_k,y^*) - f(x^*,y_k)$, where $w_k^x = f(x_k,y^*) - f^*$ and $w_k^y = f^* - f(x^*,y_k)$. Here $f^*:= f(x^*,y^*)$. 
\end{definition}
We also require the following parameters which appear throughout the proof, which are functions of the gradient estimator's batch size $m_k:=(m_k^{X}, m_k^{Y})$ and smoothing parameters $\nu:=(\nu_{X},\nu_Y)$. With $C_0$ defined in Theorem~\ref{theo:offl_nonadapt}, we define $C_1 = (L_GD^2_\calX+L_GD^2_\calY)/2$, 
\begin{align*}
    C_{2}(m_k,\nu) &= \frac{1}{4C_1}D^2_\calX \left[\frac{4(d_X+5)(L^2_X + \sigma^2)}{m_k^X} + \frac{3(\nu_X)^2}{2}(L_{GX})^2(d_X+6)^3 \right]\\ &\quad\quad +\frac{1}{4C_1}D^2_\calY \left[\frac{4(d_Y+5)(L^2_Y + \sigma^2)}{m_k^Y} + \frac{3(\nu_Y)^2}{2}(L_{GY})^2(d_Y+6)^3 \right],\\
    C_{3}(m_k,\nu) &= \frac{C_0}{2D_\calX} \sqrt{\frac{4(d_X+5)(L^2_X + \sigma^2)}{m_k^X} + \frac{3(\nu_X)^2}{2}(L_{GX})^2(d_X+6)^3} ]\\ &\quad\quad+\frac{C_0}{2D_\calY} \sqrt{\frac{4(d_Y+5)(L^2_Y + \sigma^2)}{m_k^Y} + \frac{3(\nu_Y)^2}{2}(L_{GY})^2(d_Y+6)^3 },\\
    C_{4}(m_k,\nu) &= \frac{4(d_X+5)(L^2_X + \sigma^2)}{m_k^X} + \frac{3(\nu_X)^2}{2}(L_{GX})^2(d_X+6)^3\\&~~~ + \frac{4(d_Y+5)(L^2_Y + \sigma^2)}{m_k^Y} + \frac{3\nu_Y^2}{2}(L_{GY})^2(d_Y+6)^3.
\end{align*}
When the parameter $m_k$ is constant over the iterations, we denote $C_2(m_k,\nu)$, $C_3(m_k,\nu)$, and $C_4(m_k,\nu)$ as just $C_2$, $C_3$, and $C_4$ respectively. We now state our main result on offline zeroth-order Frank-Wolfe algorithm for saddle-point optimization problem, in Theorem~\ref{theo:offl_nonadapt} below. The proof is provided in Section~\ref{sec:proofofzofw}.

\begin{algorithm}[t] \label{alg:zeroordspfw}
\caption{Zero Order Stochastic Saddle Point Frank-Wolfe Algorithm}
\textbf{Input:} $z_0 \in \calX \times \calY$, smoothing parameter $\nu_{X[Y]}>0$, positive integer sequence $m_k$, iteration limit $N \geq 1$ and probability distribution $P_R(\cdot)$ over $\{1,\ldots ,N\}$. \\
\begin{algorithmic}[1]
\State Let $z_0 = (x_0,y_0) \in \calX \times \calY$
\For{$k=1,2,\ldots N$}
    \State Compute $\bar{G}_k: = \texttt{ZOG}(x_{k-1},y_{k-1},m_k^{X[Y]},\nu_{X[Y]})$ \Comment{ Algorithm~\ref{alg:gradestfw}}
    \State Compute $s_k: = \argmin_{u\in \calX\times\calY} \inner{u,\bar{G}_k}$ and  $g_{k-1}: = \langle-\bar{G}_k, s_k-z_{k-1} \rangle$
    \State Let $\gamma_k=\frac{6}{5+k}$ \text{ (non-adaptive step size )} or $\gamma_k=\min \{\frac{C_0}{4C_1}g_{k-1},1\} $  \text{ (adaptive step size )}
    \State Update $z_{k}: = (1-\gamma_k)z_{k-1} + \gamma_k s_k$
\EndFor
\State \textbf{Output:} Generate $R$ according to $P_R(\cdot)$ and output $z_R$\ \text{ (non-adaptive step size )} , or output $z_N$  \text{ (adaptive step size )}
\end{algorithmic}
\label{algo:offline_SPFW}
\end{algorithm}
\begin{algorithm}[t]
	\caption{Zero Order Gradient Estimate (\texttt{ZOG})}  \label{alg:gradestfw}
	\textbf{Input:} $x_{k-1} \in \mathbb{R}^{d_X}$, $y_{k-1}\in\mathbb{R}^{d_Y}$, $\nu_{X[Y]}>0$, $m_k>0$ 
	\begin{algorithmic}[1]
	    \State Generate $u = [u_{1},\ldots,u_{m_k}]$, where $u_{j,x}\sim N(0,I_{d_X}),u_{j,y}\sim N(0,I_{d_Y})$
		\begin{subequations} 
			\begin{align*} 
			\begin{split}
			\bar{G}_k^x\left(x_{k-1},y_{k-1}\right)=\frac{1}{m^X_k}\sum_{j=1}^{m_k^X}\frac{F\left(x_{k-1}+\nu_X u_{j,x},y_{k-1},\xi_{j,x}\right)-F\left(x_{k-1},y_{k-1},\xi_{j,x}\right)}{\nu_X}u_{j,x}
			\end{split}\\
			\begin{split}
			\bar{G}_k^y\left(x_{k-1},y_{k-1}\right)=\frac{1}{m_k^Y}\sum_{j=1}^{m^Y_k}\frac{F\left(x_{k-1},y_{k-1}+\nu_Y u_{j,y},\xi_{j,y}\right)-F\left(x_{k-1},y_{k-1},\xi_{j,y}\right)}{\nu_Y}u_{j,y}
			\end{split}
			\end{align*}
		\end{subequations} 
		\State \textbf{Output:} $\left[\bar{G}_k^x\left(x_{k-1},y_{k-1}\right);\bar{G}_k^y\left(x_{k-1},y_{k-1}\right)\right]$
	\end{algorithmic}	
\end{algorithm}

\begin{theorem}
\label{theo:offl_nonadapt}
Let $F$ be a function for which Assumptions~\ref{as:strongconcon}, and \ref{as:lipgrad} are true. Let $\cX\times \cY$ be a convex and compact set. Let the saddle point of $F$ belongs to the interior of $\cX\times \cY$ and $\delta_\mu:= \sqrt{ \min \{ \mu_X\delta_\calX, \mu_Y\delta_\calY\} }$. Let 
\begin{align*}	
C_0:= 1- \frac{\sqrt{2}}{\delta_\mu}\max \left\{\frac{D_\calX L_{XY}}{\sqrt{\mu_y}},\frac{D_\calY L_{YX}}{\sqrt{\mu_x}}\right\}, B^{L\sigma}_X:= \max\left\{ \frac{\sqrt{L_X^2 + \sigma^2}}{L_{GX}}, 1\right\}, B^{L\sigma}_Y:= \max\left\{ \frac{\sqrt{L_Y^2 + \sigma^2}}{L_{GY}}, 1\right\}.
\end{align*}
If $C_0 >0$, we have the following to be true:
\begin{enumerate}[label=(\alph*)]
	\item For the case of non-adaptive step-size choice for $\gamma_k$, by choosing
	\begin{align*}
	 m_k^{X[Y]} &= B^{L\sigma}_{X[Y]}(d_{X[Y]}+5)N^2, \qquad \nu_{X[Y]} =\sqrt{\frac{B^{L_G\sigma}_{X[Y]}}{2N^2(d_{X[Y]}+6)^3}}\\
	 \gamma_k &= \frac{6}{5+k}, \qquad P_R(R=k)= \frac{\gamma_k\Gamma_N}{2\gamma_k (1-\Gamma_N)}  \numberthis\label{eq:gammamtnutPRchoice_nonadp}
		\end{align*}
		where $\Gamma_k = \prod_{k=1}^N \left(1-\gamma_k/2\right)$, we get 
		\begin{align*}
		\E[w_T] + \E[\wh{g}_R]  \leq \frac{120 w_0}{(N+3)^3} + \frac{18L(D^2_\calX+D^2_\calY)}{C_0(N+5)} + \frac{11\left(\sqrt{(L_{GX})^2+\sigma^2}+\sqrt{(L_{GY})^2+\sigma^2}\right)}{2NC_0}.
		\end{align*}
		Hence, the total number of calls to the stochastic zeroth-order oracle and linear optimization oracle required to find an $\epsilon$-optimal Nash equilibrium solution of problem are, respectively, bounded by 
		$\order((d_X+d_Y)/\epsilon^3)$ and $\order(1/\epsilon)$.
		\item  For the case of adaptive step-size choice for $\gamma_k$, by choosing
		\begin{align} 
		\label{eq:gammamtnutPRchoice_adp}
		\gamma_k = \max\left\{ \frac{C_0}{4C_1}g_{k-1},1\right\},
		\end{align}
		where $g_{k-1}: = \langle-\bar{G}_k, s_k-z^{k-1}\rangle$, as stated in the Algorithm~\ref{alg:zeroordspfw}, and $m_k^{X[Y]}$, $\nu_{X[Y]}$ as in \eqref{eq:gammamtnutPRchoice_nonadp}, we get,
		\begin{align}
		&\E[w_k] \leq (1-\rho)^T\left[\E[w_0] - \frac{1}{\rho}\left(\max\{C_2,C_3\} + C_4\right) \right] + \frac{1}{\rho}\left(\max\{C_2,C_3\} + C_4\right) \label{eq: geom_FW_form}
		\end{align}
		where $$\rho:= 1- \min\left\{\frac{C^2_0\delta^2_\mu}{8C_1}, \frac{C_0}{2}\right\}.$$ 
		This implies that for any $\epsilon$, if we choose $$m_k^{X[Y]} = \frac{B^{L\sigma}_{X[Y]}(d_{X[Y]}+5)}{\epsilon^{2}}, \quad  \nu_{X[Y]} = \sqrt{\frac{B^{L\sigma}_{X[Y]}}{2\epsilon^{-2}(d_{X[Y]}+6)^3}},$$
		the total number of calls to the zeroth-order stochastic oracle and linear optimization oracle required to find an $\epsilon$-optimal solution, respectively, are bounded by 
		 $\order((d_X+d_Y)/\epsilon^2)$ and $\order(\log( 1/\epsilon))$.
	\end{enumerate}
\end{theorem}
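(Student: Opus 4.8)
The plan is to reduce both parts to a single per-iteration recursion on the merit function $w_k = w_k^x + w_k^y$ of Definition~\ref{def:fwgaps}, and then only afterward specialize the step-size schedule. First I would write $w_{k+1} - w_k = [f(x_{k+1},y^*) - f(x_k,y^*)] + [f(x^*,y_k) - f(x^*,y_{k+1})]$ and apply the Lipschitz-gradient Assumption~\ref{as:lipgrad} along each Frank--Wolfe direction. Since the update \eqref{eq:alg2update} moves $x_{k+1} = x_k + \gamma_k(s_k^x - x_k)$ with $\|s_k^x - x_k\| \le D_\calX$, smoothness yields a descent bound of the form $w_{k+1} \le w_k - \gamma_k\,\hat{g}_k + \gamma_k^2 C_1 + \gamma_k\,(\text{estimation error})$, where $C_1 = (L_G D_\calX^2 + L_G D_\calY^2)/2$ and $\hat{g}_k$ is the true Frank--Wolfe gap. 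Two discrepancies must then be absorbed into the error term: the gradient entering $\hat{g}_k$ is taken at $(x_k,y_k)$ while the merit function is anchored at $(x_k,y^*)$ and $(x^*,y_k)$, which I control through the cross-Lipschitz Assumption~\ref{as:crosslipgrad} (constants $L_{XY},L_{YX}$); and the algorithm uses the biased, noisy zeroth-order estimate $\bar{G}_k$ rather than the exact gradient, whose mean-square deviation is bounded by Lemma~\ref{lm:zerograd}. Packaging these deviations together with the gap-estimation mismatch — where $g_{k-1}$ differs from $\hat{g}_k$ by an amount controlled by the diameters $D_\calX, D_\calY$ times $\|\bar{G}_k - \nabla f\|$ — produces exactly the quantities $C_2(m_k,\nu)$, $C_3(m_k,\nu)$, $C_4(m_k,\nu)$.

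The crux, and the step I expect to be the main obstacle, is the lower bound relating the Frank--Wolfe gap to the merit function for the \emph{coupled} saddle-point problem. Because the saddle point lies in the interior, for the minimization block I can take the feasible competitor $x^* - \delta_\calX\,\nabla_x f/\|\nabla_x f\|$ inside the linear-minimization set, which lower-bounds $\hat{g}_k^x$ by a multiple of $\|\nabla_x f(x_k,y_k)\|$ through the border distance $\delta_\calX$ (and symmetrically for $y$). I then pass from the gradient at $(x_k,y_k)$ to the one at $(x_k,y^*)$ via Assumption~\ref{as:crosslipgrad}, invoke the gradient-dominance inequality $\|\nabla_x f(x_k,y^*)\|^2 \ge 2\mu_X w_k^x$ from strong convexity (Assumption~\ref{as:strongconcon}), and bound the coupling $\|y_k - y^*\| \le \sqrt{2 w_k^y/\mu_Y}$. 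Adding the $x$ and $y$ estimates, the subtracted cross terms collect into the factor $C_0$, giving a bound of the form $\hat{g}_k \gtrsim C_0\,\delta_\mu\,\sqrt{w_k}$; the requirement $C_0 > 0$ is precisely the statement that the coupling $L_{XY},L_{YX}$ is weak enough relative to the strong monotonicity and the interior margin for this lower bound to be informative. The delicate point here is controlling the residual inner product $\langle \nabla_x f(x_k,y_k), x_k - x^*\rangle$ against the cross-coupling, and this is where most of the care in the constants lives.

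For part (b) I would substitute the adaptive step size $\gamma_k = \min\{\frac{C_0}{4C_1}g_{k-1},1\}$ into the recursion. On the unclipped branch the descent becomes $w_{k+1} \le w_k - \frac{C_0}{8C_1}\hat{g}_k^2 + (\text{error})$, and combining with $\hat{g}_k^2 \gtrsim \delta_\mu^2 w_k$ yields a geometric contraction with factor $1 - \frac{C_0^2\delta_\mu^2}{8C_1}$; the clipped branch $\gamma_k = 1$ contributes the alternative factor $1 - \frac{C_0}{2}$, and taking the worse of the two produces the $\rho$ of the statement. Iterating the resulting affine recursion $\E[w_k] \le (1-\rho)\E[w_{k-1}] + (\max\{C_2,C_3\} + C_4)$ gives \eqref{eq: geom_FW_form}, and choosing $m_k^{X[Y]} \sim (d_{X[Y]}+5)/\epsilon^2$, $\nu_{X[Y]} \sim \epsilon$ drives the stationary error below $\epsilon$, yielding the counts $\order((d_X+d_Y)/\epsilon^2)$ and $\order(\log(1/\epsilon))$.

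For part (a), with the non-adaptive schedule $\gamma_k = 6/(5+k)$, I would instead telescope the recursion after multiplying by the integrating factor $\Gamma_k = \prod(1-\gamma_k/2)$. Since $\gamma_k/2 = 3/(5+k)$ gives $\Gamma_N \sim (N+3)^{-3}$, this furnishes the fast $\order(1/N^3)$ decay of the initial-condition term and the $\order(1/N)$ decay of the accumulated error, matching the three terms of the stated bound. The expected gap $\E[\hat{g}_R]$ is then extracted by rearranging the per-step inequality to isolate $\gamma_k\hat{g}_k$, summing, and recognizing the weighted sum as the expectation of $\hat{g}_R$ under the sampling distribution $P_R$ built from $\Gamma_N$. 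Substituting $m_k^{X[Y]} \sim (d_{X[Y]}+5)N^2$ and $\nu_{X[Y]} \sim 1/N$ then balances the zeroth-order bias and variance against the $\order(1/N)$ optimization error, leaving the claimed $\order((d_X+d_Y)/\epsilon^3)$ and $\order(1/\epsilon)$ oracle complexities.
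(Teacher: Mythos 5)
Your proposal matches the paper's proof essentially step for step: the same merit-function recursion $w_k \leq w_{k-1} - C_0\gamma_k\wh{g}_{k-1} + \gamma_k^2 C_1 + \frac{1}{2L_G}\left(\norm{\Delta_k^x}^2+\norm{\Delta_k^y}^2\right)$ built from smoothness along the Frank--Wolfe direction, Assumption~\ref{as:crosslipgrad} plus strong convexity/concavity to control the cross terms $\norm{y_{k-1}-y^*}$, $\norm{x_{k-1}-x^*}$, Lemma~\ref{lm:zerograd} for the zeroth-order estimation error, the diameter bound $\abs{\wh{g}_k-g_k}\leq D_\calX\norm{\Delta_k^x}+D_\calY\norm{\Delta_k^y}$, the $\Gamma_k$-weighted telescoping with the randomized output index $R$ for part (a), and the two-branch (clipped versus unclipped step) geometric contraction yielding $\rho$ for part (b). The only substantive difference is that you re-derive the gap-versus-merit inequality $\wh{g}_k \gtrsim \delta_\mu\sqrt{w_k}$ via the interior-competitor argument, whereas the paper simply imports it as Lemma 19 of \cite{gidel2017frank}; your sketch of that derivation is correct, so this is a matter of self-containedness rather than a different route.
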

\begin{remark}
The number of calls to the zeroth-order oracles and the linear optimization oracle are much improved with the adaptive step-size choice. In particular we obtain the so-called geometric rate of convergence~\cite{gidel2017frank}, with this choice of step-size, in the zeroth-order setting.
\end{remark}

\section{Static Regret Bounds for Saddle-Point Optimization}\label{sec:egmethod}
Static regret refers to the case when one wants to minimize a notion of cumulative regret, which corresponds to the best possible decisions in the hindsight when we know the function $\{f_t\}_{t=1}^T$, \emph{a priori}. This notion of regret is directly motivated by online convex optimization literature; see for example~\cite{CesaBianchi2006PredictionLA, hazan2016introduction}. Recall the definition of $(x_t^*,y_t^*)$ in~\eqref{eq:nssaddle}, which corresponds to the individual minimizers of the functions $f_t$. We first introduce the following definition of $(u_t^*,v_t^*)$,  which corresponds to the saddle-points of certain smoothed (or averaged) functions. For each $t$, define the smoothed function as
\begin{align}\label{eq:smoothedfunction}
\calJ_t\left(x,y\right):=\frac{1}{t}\sum_{i=1}^{t}\expec{F_i\left(x,y,\xi\right)}.
\end{align}
The above notion will prove to be useful for defining static regret and will also be useful in the proofs later. We also define the notion of saddle-points of the smoothed functions as follows:
\begin{align}\label{eq:smoothsaddle}
\left(u_{t}^*,v_{t}^*\right):=\argmin_{x \in \calX}\argmax_{y \in \calY}\calJ_{t-1}\left(x,y\right).
\end{align}
With this definition, we have the following definition of static regret.
\begin{definition} \label{def:ssp}
Let $F_t$ be a suence functions satisfying Assumption~\ref{as:strongconcon}. Then note that $f_t$ also satisfy Assumption~\ref{as:strongconcon}. For this class of functions, with $(u_{t}^*,v_{t}^*)$ as defined in~\eqref{eq:smoothsaddle}, the Static Saddle-Point (SSP) Regret is defined as
	\begin{align}
	\mathfrak{R}_{SSP}:=\expec{\abs{\sum_{t=1}^{T}f_t\left(x_t,y_t\right)-\sum_{t=1}^{T}f_t\left(u_{T+1}^*,v^*_{T+1}\right)}}.
	\end{align}
\end{definition}
In the context of online saddle-point optimization, the above notions of regret was also considered in~\cite{Cardoso2019CompetingAE} for bi-linear functions and in~\cite{2018arXiv180608301R} for strongly-convex and strongly-concave functions. But both~\cite{2018arXiv180608301R} and~\cite{Cardoso2019CompetingAE} did not consider the case of nonstationary functions with bounded variations and assumed access to exact minimization and maximization oracles.

\subsection{Static Regret Bounds for Extragradient Method}\label{sec:staticegmethod}
We now establish sub-linear bounds on the static regret as in Definition~\ref{def:ssp}, for the extragradient method. Here, we assume the problem~\eqref{eq:nssaddle} is unconstrained, i.e., $\calX= \mathbb{R}^{d_X}$ and $\calY=\mathbb{R}^{d_Y}$.  
\begin{theorem} \label{th:rsspcor}
	Let $\left(x_t,y_t\right)$ be generated by Algorithm~\ref{alg:EG} for any sequence of functions $\lbrace f_t \rbrace_{t=1}^T$ for which Assumption~\ref{as:strongconcon}-\ref{as:lipgrad}, and \ref{as:liphess} hold true. Then, we have:
\begin{enumerate}[label=(\alph*)]
		\item Under the availability of the stochastic zeroth-order oracle, with
		\begin{align}
		\nu_{X[Y]}=\eta_t^2\left(d_{X[Y]}+3\right)^{-\frac{3}{2}}\quad \eta_t=\frac{4T^{-\alpha}}{\mu} \quad m_t^{X[Y]}=\frac{\left(d_{X[Y]}+5\right)}{\eta_t^2} \quad \alpha=\frac{1}{4}, \label{eq:statepsdeletamtchoicezero1}
		\end{align}
		we obtain
		\begin{align}\label{eq:rsspboundzero1}
		\mathfrak{R}_{SSP}\leq \mathcal{O}\left(\left(\sigma+1\right)T^\frac{3}{4}\right). 
		\end{align}
		Hence, the total number of calls to the stochastic zeroth-order oracle is $\mathcal{O}\left(\left(d_X+d_Y\right)T^\frac{5}{2}\right)$.\\
		Furthermore, by choosing
			\begin{align}
		\nu_{X[Y]}=\eta_t^4\left(d{X[Y]}+3\right)^{-\frac{3}{2}}\quad \eta_t=\frac{4T^{-\alpha}}{\mu} \quad m_t^{X[Y]}=\frac{\left(d_{X[Y]}+5\right)}{\eta_t^4}\quad \alpha=\frac{1}{4}, \label{eq:statepsdeletamtchoicezero2}
		\end{align}
		we obtain,
		\begin{align}
		\mathfrak{R}_{SSP}\leq \mathcal{O}\left(\sigma T^\frac{1}{2}+T^\frac{3}{4}\right). \label{eq:rsspboundzero2} 
		\end{align}
		Hence, the total number of calls to the stochastic zeroth-order  oracle is $\mathcal{O}\left(\left(d_X+d_Y\right)T^3\right)$.
		\item Under the availability of the stochastic first-order oracle, with
		\begin{align}
	    \eta_t=\frac{4T^{-\alpha}}{\mu} \quad m_t=\frac{1}{\eta_t^2}\quad \alpha=\frac{1}{4}, \label{eq:statepsdeletamtchoicefirst1}
		\end{align}
		we obtain,
		\begin{align}
		\mathfrak{R}_{SSP}\leq \mathcal{O}\left(\left(\sigma+1\right)T^\frac{3}{4}\right). \label{eq:rsspboundfirst1}
		\end{align} 
		Furthermore, by choosing,
		\begin{align}
		\eta_t=\frac{4T^{-\alpha}}{\mu} \quad m_t=\frac{1}{\eta_t^4}\quad \alpha=\frac{1}{4}, \label{eq:statepsdeletamtchoicefirst2}
		\end{align}
		we obtain,
		\begin{align}
		\mathfrak{R}_{SSP}\leq \mathcal{O}\left(\sigma T^\frac{1}{2}+T^\frac{3}{4}\right). \label{eq:rsspboundfirst2} 
		\end{align} 
	\end{enumerate}
\end{theorem}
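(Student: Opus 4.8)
The plan is to bound the regret by the cumulative distance between the iterates and the \emph{moving} saddle points $P_t:=(u_{t+1}^*,v_{t+1}^*)$ of the smoothed functions $\calJ_t$ from~\eqref{eq:smoothedfunction}, while separately handling the fact that the comparator $(u_{T+1}^*,v_{T+1}^*)$ in Definition~\ref{def:ssp} is \emph{fixed}. First I would write the quantity inside the absolute value as $\sum_{t}[f_t(x_t,y_t)-f_t(P_t)]+\sum_t[f_t(P_t)-f_t(P_T)]=:A+B$, where $P_T=(u_{T+1}^*,v_{T+1}^*)$, so that $\mathfrak{R}_{SSP}\le \expec{\abs{A}}+\abs{B}$ since $B$ is deterministic. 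By the Lipschitz Assumption~\ref{as:lip}, $\expec{\abs{A}}\le \sqrt{2}\,L\sum_{t=1}^{T}\expec{D_t}$ with $D_t:=\norm{(x_t,y_t)-P_t}$, which reduces the first term to a tracking-error estimate.

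For the comparator-drift term $B$, the crucial observation is a summation-by-parts identity. Using $f_t=t\calJ_t-(t-1)\calJ_{t-1}$ (with $\calJ_0\equiv 0$) and Abel summation, $\sum_{t=1}^T f_t(P_t)=T\calJ_T(P_T)+\sum_{t=1}^{T-1}t\,[\calJ_t(P_t)-\calJ_t(P_{t+1})]$, and since $\sum_t f_t(P_T)=T\calJ_T(P_T)$ we obtain $B=\sum_{t=1}^{T-1}t\,[\calJ_t(P_t)-\calJ_t(P_{t+1})]$. Because the problem is unconstrained and $P_t$ is the saddle point of $\calJ_t$ we have $\nabla\calJ_t(P_t)=0$, so smoothness (inherited from Assumptions~\ref{as:lipgrad} and~\ref{as:crosslipgrad}) gives $\abs{\calJ_t(P_{t+1})-\calJ_t(P_t)}\le \tfrac{\tilde L}{2}\norm{P_t-P_{t+1}}^2$. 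Combining this with the saddle-point drift bound $\norm{P_t-P_{t+1}}\le 2L/(\mu(t+1))$ — which follows from $\nabla\calJ_{t+1}-\nabla\calJ_t=\tfrac{1}{t+1}(\nabla f_{t+1}-\nabla\calJ_t)$ and the $\mu$-strong monotonicity of the averaged saddle operator (Assumption~\ref{as:strongconcon}) — yields $\abs{B}\le \tfrac{\tilde L}{2}\sum_{t}t\cdot\tfrac{4L^2}{\mu^2(t+1)^2}=\order(\log T)$, which is negligible. The squaring is what matters here: it converts an otherwise linear accumulation into a logarithmic one.

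For the tracking error I would set up a one-step contraction through the proximal-point surrogate of Algorithm~\ref{alg:pp}. Strong monotonicity implies the proximal-point update $\hat Z_{t+1}$ on $\calJ_t$ contracts toward $P_t$, namely $\norm{\hat Z_{t+1}-P_t}\le (1+\eta_t\mu)^{-1}\norm{Z_t-P_t}$, while Lemma~\ref{lm:stocherrorppeg} controls the deviation $\expec{\norm{Z_{t+1}-\hat Z_{t+1}}^2|\calF_t}\le e_{t+1}$ of the extragradient iterate from this surrogate. The triangle inequality, together with the drift bound above, then gives the recursion $\expec{D_{t+1}}\le (1+\eta_t\mu)^{-1}\expec{D_t}+\sqrt{e_{t+1}}+\norm{P_t-P_{t+1}}$. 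With the constant step size $\eta_t=4T^{-\alpha}/\mu$ the contraction factor is $q=(1+4T^{-\alpha})^{-1}$ with $(1-q)^{-1}=\order(T^{\alpha})$, so unrolling the geometric recursion gives $\sum_{t}\expec{D_t}\le (1-q)^{-1}\big(D_0+\sum_s\sqrt{e_s}+\sum_s\norm{P_s-P_{s+1}}\big)$.

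It then remains to substitute the parameter choices. Plugging Lemma~\ref{lm:zerograd} into the expression for $e_{t+1}$ from Lemma~\ref{lm:stocherrorppeg} and using the stated $\nu_{X[Y]}$ and $m_t^{X[Y]}$ gives $\sqrt{e_{t+1}}=\order((\sigma+1)\eta_t^{2})$ for the first choice~\eqref{eq:statepsdeletamtchoicezero1} and $\order((\sigma+1)\eta_t^{3})$ for the second~\eqref{eq:statepsdeletamtchoicezero2} (the larger batch size suppressing the variance term); with $\eta_t=\order(T^{-1/4})$ the sums $\sum_s\sqrt{e_s}$ are $\order((\sigma+1)T^{1/2})$ and $\order((\sigma+1)T^{1/4})$, and multiplying by $(1-q)^{-1}=\order(T^{1/4})$ produces the stated rates $\order((\sigma+1)T^{3/4})$ and $\order(\sigma T^{1/2}+T^{3/4})$; the first-order case (b) is identical with $e_{t+1}$ replaced by its first-order counterpart from Lemma~\ref{lm:stocherrorppeg}, and the oracle-complexity counts follow by multiplying the number of iterations by $m_t^{X[Y]}$. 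I expect the main obstacle to be exactly the comparator-drift term $B$: a naive bound on $\sum_t\norm{P_t-P_T}$ is linear in $T$, and only the summation-by-parts identity paired with the second-order (squared) smoothness bound at the saddle point recovers sublinearity. A secondary technical point is verifying that the strong-monotonicity modulus of the averaged operator is exactly $\min(\mu_X,\mu_Y)$, so that the proximal contraction and the drift estimate hold uniformly in $t$.
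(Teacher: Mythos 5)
Your proposal is correct in substance and shares the paper's core engine---the proximal-point surrogate of Algorithm~\ref{alg:pp} on $\calJ_t$, the approximation error bound of Lemma~\ref{lm:stocherrorppeg}, the contraction of the proximal step toward the saddle of $\calJ_t$, and the drift bound $\norm{P_t-P_{t+1}}\leq \order(L/(\mu t))$, which is exactly Lemma~\ref{lm:optimachange} (your monotone-operator derivation of it is the same argument as in the cited reference)---but your regret decomposition is genuinely different. The paper splits off the fixed comparator via a be-the-leader induction (Lemma~\ref{lm:BTL}), which uses only Assumption~\ref{as:lip} and the saddle inequalities of $\calJ_T$, then pays an explicit iterate-movement term $L\sum_t(\norm{x_t-x_{t+1}}+\norm{y_t-y_{t+1}})\leq 2\eta_t L^2 T=\order(T^{3/4})$, and converts the squared path regret $\mathfrak{R}_{SPP}$ of \eqref{eq:sppstatic} (Lemma~\ref{lm:statpathreg}) into $\mathfrak{R}_{SSP}$ through a Cauchy--Schwarz step $2L\sqrt{T\mathfrak{R}_{SPP}}$. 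You instead track first moments $\E[D_t]$ directly against the moving saddle $P_t$ and handle the fixed comparator by Abel summation plus the quadratic bound $\abs{\calJ_t(P_{t+1})-\calJ_t(P_t)}\leq \tfrac{\tilde L}{2}\norm{P_t-P_{t+1}}^2$ at the stationary point. This buys something real: you avoid both the $\eta_t L^2T$ movement term and the Cauchy--Schwarz loss, so for the large-batch choices \eqref{eq:statepsdeletamtchoicezero2} and \eqref{eq:statepsdeletamtchoicefirst2} your accounting actually yields $\order((\sigma+1)T^{1/2})$, strictly sharper than the stated $\order(\sigma T^{1/2}+T^{3/4})$ (harmless, since these are upper bounds). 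Two caveats. First, your quadratic bound on $B$ needs \emph{joint} smoothness of $\calJ_t$, i.e.\ Assumption~\ref{as:crosslipgrad} in addition to Assumption~\ref{as:lipgrad}; Assumption~\ref{as:crosslipgrad} is \emph{not} among the theorem's hypotheses, and your remark that ``only the summation-by-parts identity recovers sublinearity'' is not quite right---the paper's BTL induction gets the same $\order(\log T)$ drift using Lipschitzness alone, so if you want to stay within the stated assumptions you should substitute that induction for your $B$-argument. Second, your oracle count (``iterations times $m_t^{X[Y]}$'') misses a factor of $t$: in the static-regret mode, \texttt{gradest} in Algorithm~\ref{alg:gradest} re-queries all past functions $F_i$, $1\leq i\leq t$, at each round, which is precisely why the totals are $\order((d_X+d_Y)T^{5/2})$ and $\order((d_X+d_Y)T^{3})$ rather than $\order((d_X+d_Y)T^{3/2})$ and $\order((d_X+d_Y)T^{2})$.
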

\begin{remark}
    In Theorem~\ref{th:rsspcor}, better bounds for $\mathfrak{R}_{SSP}$ are achieved in \eqref{eq:rsspboundzero2}, and  \eqref{eq:rsspboundfirst2} compared to \eqref{eq:rsspboundzero1}, and \eqref{eq:rsspboundfirst1}. This improvement comes at a price of larger mini-batch size to estimate gradient, i.e., mini-batch size is of the order of $ t^\frac{2}{3}$ in the former case compared to $t^\frac{1}{2}$ in the later. 
\end{remark}
Before proving Theorem~\ref{th:rsspcor}, we first provide a high-level sketch. First, in Lemma~\ref{lm:BTL}, we show that if one observes the function before playing, i.e. if we could choose point $\left(x_{t+1},y_{t+1}\right)$ for function $f_t$, then $\mathfrak{R}_{SSP}$ is bounded by the sum of two terms: the first term, $L\sum_{t=1}^{T}\left(\norm{u^*_t - u^*_{t+1}}+\norm{v^*_t - v^*_{t+1}}\right)$, measures the closeness of the saddle points of $\calJ_t\left(x,y\right)$ over consecutive time steps, and the second term, $L\sum_{t=1}^{T}\left(\norm{v^*_{t+1} - y_{t+1}}+\norm{u^*_{t+1}-x_{t+1}}\right)$, measures how far the iterate at time $t+1$ is from the saddle point of $\calJ_{t}\left(x,y\right)$. To bound the first term, we leverage Lemma 2 of \cite{2018arXiv180608301R}, and show in Lemma~\ref{lm:optimachange} that at any time $t$, the saddle point of $\calJ_t\left(x,y\right)$ is close to the saddle point of $\calJ_{t-1}\left(x,y\right)$. To bound the second term, we first define \eqref{eq:sppstatic} and bound an auxiliary regret $\mathfrak{R}_{SPP}$ in Lemma~\ref{lm:statpathreg}. $\mathfrak{R}_{SPP}$ measures the squared distance of the iterate at time $t$ to the saddle point of $\calJ_{t}\left(x,y\right)$. Combining the above two results, and choosing the tuning parameters appropriately proves Theorem~\ref{th:rsspcor}. 
\begin{lemma}\label{lm:BTL} Under Assumption~\ref{as:lip}, we have
	\begin{align*}
	\abs{\sum_{t=1}^{T}f_t\left(x_{t+1},y_{t+1}\right) -  \sum_{t=1}^{T}f_t\left(u^*_{T+1},v^*_{T+1}\right)}
	\leq & L\sum_{t=1}^{T}\left(\norm{u^*_t - u^*_{t+1}}+\norm{v^*_t - v^*_{t+1}}+\norm{v^*_{t+1} - y_{t+1}}\right.\\
	&\left.+\norm{u^*_{t+1}-x_{t+1}}\right).\numberthis \label{eq:btl}
	\end{align*}
\end{lemma}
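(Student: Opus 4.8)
The plan is to insert the running smoothed saddle points $\left(u^*_{t+1},v^*_{t+1}\right)$ as intermediate comparators and to split the left-hand side by the triangle inequality into an ``iterate error'' piece and a ``Be-the-Leader'' piece:
\begin{align*}
& \abs{\sum_{t=1}^{T}f_t\left(x_{t+1},y_{t+1}\right) - \sum_{t=1}^{T}f_t\left(u^*_{T+1},v^*_{T+1}\right)} \\
\leq {} & \abs{\sum_{t=1}^{T}\left(f_t\left(x_{t+1},y_{t+1}\right) - f_t\left(u^*_{t+1},v^*_{t+1}\right)\right)} + \abs{\sum_{t=1}^{T}\left(f_t\left(u^*_{t+1},v^*_{t+1}\right) - f_t\left(u^*_{T+1},v^*_{T+1}\right)\right)}.
\end{align*}
For the first piece I would apply Assumption~\ref{as:lip} one coordinate at a time: inserting $f_t\left(u^*_{t+1},y_{t+1}\right)$ and using the separate $L_X$- and $L_Y$-Lipschitz bounds gives $\abs{f_t\left(x_{t+1},y_{t+1}\right) - f_t\left(u^*_{t+1},v^*_{t+1}\right)} \leq L\left(\norm{u^*_{t+1}-x_{t+1}} + \norm{v^*_{t+1}-y_{t+1}}\right)$, which after summing produces exactly the last two families of terms on the right-hand side of~\eqref{eq:btl}.

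The second piece is where the smoothed-saddle structure is essential. Writing $S_t:=\sum_{i=1}^{t} f_i$, so that $\left(u^*_{t+1},v^*_{t+1}\right)$ is the saddle point of $\calJ_t = S_t/t$ and hence of $S_t$, I would use the telescoping identity (valid with the convention $S_0\equiv 0$)
\begin{align*}
\sum_{t=1}^{T}f_t\left(u^*_{t+1},v^*_{t+1}\right) - \sum_{t=1}^{T}f_t\left(u^*_{T+1},v^*_{T+1}\right) = \sum_{t=1}^{T}\left(S_{t-1}\left(u^*_{t},v^*_{t}\right) - S_{t-1}\left(u^*_{t+1},v^*_{t+1}\right)\right),
\end{align*}
which follows because $\sum_{t=1}^{T}f_t\left(u^*_{T+1},v^*_{T+1}\right)=S_T\left(u^*_{T+1},v^*_{T+1}\right)$ and the partial sums telescope. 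To each summand I would apply the saddle-point inequalities for $\left(u^*_{t},v^*_{t}\right)$ with respect to $S_{t-1}$, namely $S_{t-1}\left(u^*_{t},v^*_{t+1}\right)\leq S_{t-1}\left(u^*_{t},v^*_{t}\right)\leq S_{t-1}\left(u^*_{t+1},v^*_{t}\right)$; these sandwich $S_{t-1}\left(u^*_{t},v^*_{t}\right) - S_{t-1}\left(u^*_{t+1},v^*_{t+1}\right)$ between two differences that each vary only a single coordinate, after which Assumption~\ref{as:lip} converts them into the consecutive-displacement terms $L\left(\norm{u^*_t - u^*_{t+1}} + \norm{v^*_t - v^*_{t+1}}\right)$ of~\eqref{eq:btl}. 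Taking the two one-sided bounds (upper via the min-side, lower via the max-side inequality) covers the absolute value.

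The main obstacle is precisely this saddle-point ``Be-the-Leader'' step. In the pure minimization setting the leader's optimality immediately collapses the running-versus-final comparison into single-step movements; here the min-player and the max-player optimality must be invoked together, and care is needed so that the comparison between the running saddle $\left(u^*_{t+1},v^*_{t+1}\right)$ and the terminal saddle $\left(u^*_{T+1},v^*_{T+1}\right)$ reduces to a sum of single-step saddle displacements rather than accumulating the full path length to the terminal saddle. Handling this cleanly is exactly what the sandwiching by the two saddle inequalities is designed to accomplish, and it is the reason the decomposition isolates $\norm{u^*_t-u^*_{t+1}}$ and $\norm{v^*_t-v^*_{t+1}}$ (later controlled by the consecutive-change bound on the smoothed saddle points) rather than cruder distances between iterates and the terminal comparator.
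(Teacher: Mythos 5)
Your decomposition and the telescoping identity are both correct (with the convention $S_0\equiv 0$ the identity checks out), but the step that converts each summand $D_t:=S_{t-1}\left(u^*_t,v^*_t\right)-S_{t-1}\left(u^*_{t+1},v^*_{t+1}\right)$ into $L\left(\norm{u^*_t-u^*_{t+1}}+\norm{v^*_t-v^*_{t+1}}\right)$ fails as written. After sandwiching with the saddle inequalities of $\left(u^*_t,v^*_t\right)$ for $S_{t-1}$, the one-coordinate differences you are left with, e.g.\ $S_{t-1}\left(u^*_{t+1},v^*_t\right)-S_{t-1}\left(u^*_{t+1},v^*_{t+1}\right)$, are differences of the \emph{unnormalized} partial sum $S_{t-1}=\sum_{i=1}^{t-1}f_i$, which under Assumption~\ref{as:lip} is only $(t-1)L$-Lipschitz in each block. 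So your per-term bound is $(t-1)L\left(\norm{u^*_t-u^*_{t+1}}+\norm{v^*_t-v^*_{t+1}}\right)$, not $L\left(\cdot\right)$, and the loss is not cosmetic: by Lemma~\ref{lm:optimachange} the consecutive displacements are of order $1/(\mu t)$, so the bound \eqref{eq:btl} contributes $\mathcal{O}(\log T)$ downstream, whereas your weighted version contributes $\mathcal{O}(T)$, which would render the regret bounds of Theorem~\ref{th:rsspcor} vacuous.

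The gap is repairable inside your own skeleton, and the repair is exactly the ingredient the paper's inductive proof uses: you invoked only the optimality of the \emph{old} saddle $\left(u^*_t,v^*_t\right)$ with respect to $S_{t-1}$, but you must also invoke the optimality of the \emph{new} saddle $\left(u^*_{t+1},v^*_{t+1}\right)$ with respect to $S_t=S_{t-1}+f_t$. For the upper bound, min-player optimality of $u^*_t$ gives $D_t \le S_{t-1}\left(u^*_{t+1},v^*_t\right)-S_{t-1}\left(u^*_{t+1},v^*_{t+1}\right)$, and then max-player optimality of $v^*_{t+1}$ for $S_t$, i.e.\ $S_t\left(u^*_{t+1},v^*_t\right)\le S_t\left(u^*_{t+1},v^*_{t+1}\right)$, converts this into $f_t\left(u^*_{t+1},v^*_{t+1}\right)-f_t\left(u^*_{t+1},v^*_t\right)\le L\norm{v^*_t-v^*_{t+1}}$: the $t-1$ old functions cancel and only the single new function pays a Lipschitz cost. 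Symmetrically, min-player optimality of $u^*_{t+1}$ for $S_t$ gives $D_t\ge -L\norm{u^*_t-u^*_{t+1}}$, so $\abs{D_t}\le L\left(\norm{u^*_t-u^*_{t+1}}+\norm{v^*_t-v^*_{t+1}}\right)$ and summing recovers \eqref{eq:btl}. With this correction your telescoped argument is valid and is essentially the unrolled form of the paper's induction, whose chain $\sum_{t=1}^{T}f_t\left(u^*_{T+1},v^*_{T+1}\right)\ge\sum_{t=1}^{T}f_t\left(u^*_{T+1},v^*_T\right)\ge\sum_{t=1}^{T-1}f_t\left(u^*_T,v^*_T\right)+f_T\left(u^*_{T+1},v^*_T\right)$ applies precisely this old-saddle/new-saddle pair at each step, together with one application of Assumption~\ref{as:lip} to the single function $f_T$.
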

\begin{proof}[Proof of Lemma~\ref{lm:BTL}]
	We prove the following two inequalities by induction which implies \eqref{eq:btl}:
	\begin{align*}
	\sum_{t=1}^{T}f_t\left(x_{t+1},y_{t+1}\right) -  \sum_{t=1}^{T}f_t\left(u^*_{T+1},v^*_{T+1}\right)
	\leq & L\sum_{t=1}^{T}\left(\norm{v^*_t - v^*_{t+1}}+\norm{v^*_{t+1} - y_{t+1}}
	+\norm{u^*_{t+1}-x_{t+1}}\right)\\
	\sum_{t=1}^{T}f_t\left(x_{t+1},y_{t+1}\right) -  \sum_{t=1}^{T}f_t\left(u^*_{T+1},v^*_{T+1}\right)
	\geq & -L\sum_{t=1}^{T}\left(\norm{u^*_t - u^*_{t+1}}+\norm{v^*_{t+1} - y_{t+1}}
	+\norm{u^*_{t+1}-x_{t+1}}\right).
	\end{align*}
    \begin{align*}
       &\text{Base Case:}\quad f_1(u^*_2,v^*_2) \geq f_1(x_2,y_2) - L(\norm{v^*_1 - v^*_2}+\norm{v^*_2 - y_2}+\norm{u^*_2-x_2})\\
       &\text{The base case is true by Assumption~\ref{as:lip}.}\\ 
       &\text{Inductive Assumption:}\\&\sum_{t=1}^{T-1}f_t(u^*_T,v^*_T) \geq
       \sum_{t=1}^{T-1}f_t(x_{t+1},y_{t+1}) - L\sum_{t=1}^{T-1}\left(\norm{v^*_t - v^*_{t+1}}+\norm{v^*_{t+1} - y_{t+1}}+\norm{u^*_{t+1}-x_{t+1}}\right).
    \end{align*}
    So, using Assumption~\ref{as:lip}, and the inductive assumption, we get
    \begin{align*}
        \sum_{t=1}^{T}f_t(u^*_{T+1},v^*_{T+1})
        & \geq \sum_{t=1}^{T-1}f_t(u^*_{T+1},v^*_{T})+ f_T(u^*_{T+1},v^*_{T})\\
        & \geq \sum_{t=1}^{T-1}f_t(u^*_{T},v^*_{T}) + f_T(u^*_{T+1},v^*_{T})\\
        & \geq \sum_{t=1}^{T}f_t(x_{t+1},y_{t+1})  - L\sum_{t=1}^{T-1}\left(\norm{v^*_t - v^*_{t+1}}+\norm{v^*_{t+1} - y_{t+1}}+\norm{u^*_{t+1}-x_{t+1}}\right))\\
        &\quad + f_T(u^*_{T+1},v^*_{T}) - f_T(x_{T+1},y_{T+1})\\
        & \geq \sum_{t=1}^{T}f_t(x_{t+1},y_{t+1})- L\sum_{t=1}^{T-1}\left(\norm{v^*_t - v^*_{t+1}}+\norm{v^*_{t+1} - y_{t+1}}+\norm{u^*_{t+1}-x_{t+1}}\right) \\
        & \quad - L\left(\norm{v^*_T - y_{T+1}}+\norm{u^*_{T+1}-x_{T+1}}\right)\\
        & = \sum_{t=1}^{T}f_t(x_{t+1},y_{t+1})- L\sum_{t=1}^{T}\left(\norm{v^*_t - v^*_{t+1}}+\norm{v^*_{t+1} - y_{t+1}}+\norm{u^*_{t+1}-x_{t+1}}\right). 
    \end{align*}
    Using a similar approach, we get the result for another direction, thereby proving the required statement.
\end{proof}

In the following, we prove a bound for a regret which measures how far the iterates $\left(x_t,y_t\right)$ are from the saddle point $\left(u_{t+1}^*,y_{t+1}^*\right)$ of the sum of all the functions until corresponding time $t$. To be precise, we consider the regret
\begin{align}\label{eq:sppstatic}
\mathfrak{R}_{SPP}=\expec{\sum_{t=1}^{T}r_{t}^s}:=\sum_{t=1}^{T}\expec{\left(\|x_{t-1}-u_{t}^*\|^2+\|y_{t-1}-v_{t}^*\|^2\right)}.
\end{align}
Later we use this bound to bound $\mathfrak{R}_{SSP}$ in Theorem~\ref{th:rsspcor}.  
\begin{lemma}\cite{2018arXiv180608301R}\label{lm:optimachange}
Let $\lbrace f_t\rbrace_{t=1}^T$ be a sequence of functions for which Assumptions~\ref{as:strongconcon}--\ref{as:lip} are true. Let $\calJ_t\left(x,y\right)$, and $\left(u_t^*,v_t^*\right)$ be as in \eqref{eq:smoothedfunction}--\eqref{eq:smoothsaddle}. Then,
	\begin{align}
	\|u_{t}^*-u_{t+1}^*\|+\|v_t^*-v_{t+1}^*\|\leq \frac{4L}{\mu t}.
	\end{align}
\end{lemma}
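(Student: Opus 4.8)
The plan is to pass to the saddle-point (monotone) operator attached to each averaged objective and to exploit the fact that $\calJ_{t-1}$ and $\calJ_t$ differ only by an $\order(1/t)$ perturbation. For a continuously differentiable $g$ that is $\mu_X$-strongly convex in $x$ and $\mu_Y$-strongly concave in $y$, write $z=(x,y)$ and define the operator $\Phi_g(z):=\left(\nabla_x g(x,y),\,-\nabla_y g(x,y)\right)$. The first step is to record that $\Phi_g$ is $\mu$-strongly monotone, i.e. $\inner{\Phi_g(z)-\Phi_g(z'),\,z-z'}\geq \mu\norm{z-z'}^2$ with $\mu=\min(\mu_X,\mu_Y)$. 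I would derive this from first-order information only (no Hessians are available under Assumptions~\ref{as:strongconcon}--\ref{as:lip}): writing the four strong convexity/concavity inequalities comparing $g$ at the corners $(x,y),(x',y),(x,y'),(x',y')$ and adding them, the bilinear discrepancy $g(x,y)-g(x',y)-g(x,y')+g(x',y')$ cancels, leaving exactly $\mu_X\norm{x-x'}^2+\mu_Y\norm{y-y'}^2\geq \mu\norm{z-z'}^2$. Since $\calJ_{t-1}$ and $\calJ_t$ are averages of the $f_i$ they inherit Assumption~\ref{as:strongconcon}, so each admits a unique saddle point in the unconstrained setting, and stationarity gives $\Phi_{\calJ_{t-1}}(z_t^*)=0$ and $\Phi_{\calJ_t}(z_{t+1}^*)=0$, where I abbreviate $z_t^*=(u_t^*,v_t^*)$.

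The second ingredient is the identity $\calJ_t=\frac{t-1}{t}\calJ_{t-1}+\frac1t f_t$, immediate from~\eqref{eq:smoothedfunction}, which upon differentiation yields $\Phi_{\calJ_t}-\Phi_{\calJ_{t-1}}=\frac1t\left(\Phi_{f_t}-\Phi_{\calJ_{t-1}}\right)$. I would then combine stationarity of $\calJ_{t-1}$ at $z_t^*$ with strong monotonicity and Cauchy--Schwarz:
\begin{align*}
\mu\norm{z_{t+1}^*-z_t^*}^2 &\leq \inner{\Phi_{\calJ_{t-1}}(z_{t+1}^*)-\Phi_{\calJ_{t-1}}(z_t^*),\,z_{t+1}^*-z_t^*}\\
&=\inner{\Phi_{\calJ_{t-1}}(z_{t+1}^*),\,z_{t+1}^*-z_t^*}\\
&\leq \norm{\Phi_{\calJ_{t-1}}(z_{t+1}^*)}\,\norm{z_{t+1}^*-z_t^*},
\end{align*}
hence $\norm{z_{t+1}^*-z_t^*}\leq \mu^{-1}\norm{\Phi_{\calJ_{t-1}}(z_{t+1}^*)}$. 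It then remains to bound the residual; using stationarity of $\calJ_t$ at $z_{t+1}^*$ together with the perturbation identity,
\begin{align*}
\Phi_{\calJ_{t-1}}(z_{t+1}^*)=\Phi_{\calJ_{t-1}}(z_{t+1}^*)-\Phi_{\calJ_t}(z_{t+1}^*)=\tfrac1t\left(\Phi_{\calJ_{t-1}}(z_{t+1}^*)-\Phi_{f_t}(z_{t+1}^*)\right).
\end{align*}
By Assumption~\ref{as:lip} each partial gradient of $f_t$ is bounded by $L$, and $\nabla_{x[y]}\calJ_{t-1}$ is an average of such gradients, so each of the two blocks of $\Phi_{\calJ_{t-1}}-\Phi_{f_t}$ has norm at most $2L$ and therefore $\norm{\Phi_{\calJ_{t-1}}(z_{t+1}^*)}\leq 2\sqrt2\,L/t$. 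This gives $\norm{z_{t+1}^*-z_t^*}\leq 2\sqrt2\,L/(\mu t)$, and converting the joint norm into the sum of components via $a+b\leq\sqrt2\sqrt{a^2+b^2}$ yields $\norm{u_t^*-u_{t+1}^*}+\norm{v_t^*-v_{t+1}^*}\leq \sqrt2\,\norm{z_t^*-z_{t+1}^*}\leq 4L/(\mu t)$, exactly the claim.

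The main obstacle is the very first step, establishing $\mu$-strong monotonicity of the coupled operator $\Phi_g$: the minimizing and maximizing variables are entangled, and any attempt to bound $\norm{u_t^*-u_{t+1}^*}$ and $\norm{v_t^*-v_{t+1}^*}$ one coordinate at a time runs into cross terms that carry no definite sign. The resolution is the cancellation of the bilinear discrepancy when the four corner inequalities are summed, which is precisely what forces one to work with the joint variable $z$ and the operator $\Phi_g$ rather than decoupling $x$ and $y$; note also that this route stays within the stated $C^1$ assumptions, avoiding any appeal to second derivatives. Everything downstream---the $1/t$ perturbation identity, the Cauchy--Schwarz step, and the uniform gradient bound from Assumption~\ref{as:lip}---is routine, and the constant $4$ simply collects the elementary factors arising from these two steps.
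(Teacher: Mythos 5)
Your proof is correct, and it is worth noting that the paper itself supplies no argument for this statement: it imports the lemma wholesale from \cite{2018arXiv180608301R} (their Lemma 2), so your derivation is a self-contained replacement rather than a variant of an in-paper proof. The three ingredients all check out. The four-corner summation does establish $\mu$-strong monotonicity of $\Phi_g(z)=(\nabla_x g,-\nabla_y g)$ from first-order information alone: adding the two strong-convexity inequalities in $x$ (at $y$ and at $y'$) to the two strong-concavity inequalities in $y$ (at $x$ and at $x'$), every function value cancels and one is left with $\inner{\Phi_g(z)-\Phi_g(z'),z-z'}\geq \mu_X\norm{x-x'}^2+\mu_Y\norm{y-y'}^2\geq\mu\norm{z-z'}^2$. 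The perturbation identity $\Phi_{\calJ_{t-1}}-\Phi_{\calJ_t}=\tfrac1t\left(\Phi_{\calJ_{t-1}}-\Phi_{f_t}\right)$ is immediate from $\calJ_t=\tfrac{t-1}{t}\calJ_{t-1}+\tfrac1t f_t$, and Assumption~\ref{as:lip} bounds each block of $\Phi_{\calJ_{t-1}}-\Phi_{f_t}$ by $2L$, giving $\norm{\Phi_{\calJ_{t-1}}(z^*_{t+1})}\leq 2\sqrt2\,L/t$; combined with your Cauchy--Schwarz step this yields $\norm{z^*_{t+1}-z^*_t}\leq 2\sqrt2\,L/(\mu t)$, and the $\sqrt2$ from converting to the sum of component norms reproduces the constant $4$ exactly.

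One caveat you should address explicitly: your two stationarity identities $\Phi_{\calJ_{t-1}}(z^*_t)=0$ and $\Phi_{\calJ_t}(z^*_{t+1})=0$ are valid only in the unconstrained setting, but the paper invokes Lemma~\ref{lm:optimachange} also in the constrained Frank--Wolfe analysis (Lemma~\ref{lm:induct_helper} and Theorem~\ref{th:static_ol_firstOrder}), where $\calX\times\calY$ is compact and the saddle points satisfy variational inequalities rather than first-order equalities. The fix is one line and preserves your constant: replace stationarity by $\inner{\Phi_{\calJ_{t-1}}(z^*_t),\,z^*_{t+1}-z^*_t}\geq 0$ and $\inner{\Phi_{\calJ_t}(z^*_{t+1}),\,z^*_t-z^*_{t+1}}\geq 0$, so that
\begin{align*}
\mu\norm{z^*_{t+1}-z^*_t}^2
\leq \inner{\Phi_{\calJ_{t-1}}(z^*_{t+1}),\,z^*_{t+1}-z^*_t}
\leq \tfrac1t\inner{\Phi_{\calJ_{t-1}}(z^*_{t+1})-\Phi_{f_t}(z^*_{t+1}),\,z^*_{t+1}-z^*_t},
\end{align*}
where the second inequality uses the VI at $z^*_{t+1}$ to discard the $\Phi_{\calJ_t}$ term, and the rest of your argument proceeds unchanged. (A further pedantic point, which is a quirk of the paper's definitions rather than of your proof: for $t=1$ the comparison involves $\calJ_0$, an empty average, so the statement is only meaningful for $t\geq 2$ under \eqref{eq:smoothedfunction}--\eqref{eq:smoothsaddle}.)
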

\begin{lemma} \label{lm:statpathreg}
	Let $\left(x_t,y_t\right)$ be generated by Algorithm~\ref{alg:EG} for any sequence of functions $\lbrace f_t \rbrace_{t=1}^T$ for which Assumption~\ref{as:strongconcon}-\ref{as:lipgrad}, and \ref{as:liphess} hold true. Then,
\begin{enumerate}[label=(\alph*)]
		\item Under the availability of the stochastic zeroth-order oracle, choosing $\eta_t$, $\nu_{X[Y]}$, $\epsilon$, $\delta$, $m_t^{X[Y]}$, and $\alpha$ as in \eqref{eq:statepsdeletamtchoicezero1},
		we obtain,
		\begin{align}
		\mathfrak{R}_{SPP}\leq \mathcal{O}\left(\left(\sigma^2+1\right)\sqrt{T}\right). 
		\label{eq:rsppboundzero1}
		\end{align}
		Furthermore, by choosing $\eta_t$, $\nu_{X[Y]}$, $\epsilon$, $\delta$, and $m_t^{X[Y]}$ as in \eqref{eq:statepsdeletamtchoicezero2}, and $\alpha=\frac{1}{6}$,
		we obtain,
		\begin{align}
		\mathfrak{R}_{SPP}\leq \mathcal{O}\left(\left(\sigma^2+1\right)T^\frac{1}{3}\right). \label{eq:rsppboundzero2} 
		\end{align}
		\item Under the availability of the stochastic first-order oracle, choosing $\eta_t$, $\epsilon$, $\delta$, $m_t$, and $\alpha$ as in \eqref{eq:statepsdeletamtchoicefirst1},
		we obtain,
		\begin{align}
		\mathfrak{R}_{SPP}\leq \mathcal{O}\left(\left(\sigma^2+1\right)\sqrt{T}\right). \label{eq:rsppboundfirst1}
		\end{align} 
		Furthermore, with $\eta_t$, $\epsilon$, $\delta$, and $m_t$ as in \eqref{eq:statepsdeletamtchoicefirst2}, and $\alpha=\frac{1}{6}$,
		we obtain,
		\begin{align}
		\mathfrak{R}_{SPP}\leq \mathcal{O}\left(\left(\sigma^2+1\right)T^\frac{1}{3}\right). \label{eq:rsppboundfirst2} 
		\end{align} 
	\end{enumerate}
\end{lemma}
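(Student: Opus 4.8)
The plan is to track the squared distance from each iterate to the saddle point of the corresponding smoothed function and to establish a contractive one–step recursion that can be summed geometrically. Write $z_t=(x_t,y_t)$ and let $z_{t+1}^*=(u_{t+1}^*,v_{t+1}^*)$ be the saddle point of $\calJ_t$ from \eqref{eq:smoothsaddle}. With this notation the target quantity is $\mathfrak{R}_{SPP}=\sum_{t=1}^{T}\expec{\norm{z_{t-1}-z_t^*}^2}=\sum_{s=0}^{T-1}\expec{a_s^2}$, where $a_s:=\norm{z_s-z_{s+1}^*}$ is the distance from the iterate at time $s$ to the saddle point of $\calJ_s$. I would bound $\sum_s\expec{a_s^2}$ by unrolling a geometric recursion in $\expec{a_s^2}$.

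First, I would bring in the fictional proximal-point iterate $\hat z_{t+1}$ produced by Algorithm~\ref{alg:pp} on $\calJ_t$. Since $\calJ_t$ is an average of functions each satisfying Assumption~\ref{as:strongconcon}, its saddle operator $(\nabla_x\calJ_t,-\nabla_y\calJ_t)$ is $\mu$-strongly monotone, so the update of Algorithm~\ref{alg:pp} is exactly its resolvent $(I+\eta_t F)^{-1}$, a contraction with factor $(1+\eta_t\mu)^{-1}$ toward its unique zero $z_{t+1}^*$; hence $\norm{\hat z_{t+1}-z_{t+1}^*}\le (1+\eta_t\mu)^{-1}\norm{z_t-z_{t+1}^*}$. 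Combining this with the triangle inequality, the extragradient–to–proximal-point closeness from Lemma~\ref{lm:stocherrorppeg}, and the saddle-point drift bound $\norm{z_{t+1}^*-z_{t+2}^*}\le 4L/(\mu(t+1))$ from Lemma~\ref{lm:optimachange}, I obtain the one-step inequality
\begin{align*}
a_{t+1}\;\le\;\frac{1}{1+\eta_t\mu}\,a_t+\norm{z_{t+1}-\hat z_{t+1}}+\frac{4L}{\mu(t+1)}.
\end{align*}

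The quantitatively delicate step is to square this recursion without destroying the contraction. A crude $(p+q)^2\le 2p^2+2q^2$ would leave a factor $2/(1+\eta_t\mu)^2>1$ and no contraction; instead I would use the weighted inequality $(p+q)^2\le(1+\eta_t\mu)p^2+(1+(\eta_t\mu)^{-1})q^2$, which preserves a genuine contraction factor $(1+\eta_t\mu)^{-1}<1$ at the cost of a multiplier $1+(\eta_t\mu)^{-1}$ on the forcing terms. Taking conditional expectation given $\calF_t$ and inserting the bounds $e_{0,t+1,X[Y]}$ (resp.\ $e_{1,t+1}$) of Lemma~\ref{lm:stocherrorppeg} for $\expec{\norm{z_{t+1}-\hat z_{t+1}}^2\mid\calF_t}$ yields, with $D_t:=\expec{a_t^2}$ and $\rho:=(1+\eta_t\mu)^{-1}$,
\begin{align*}
D_{t+1}\;\le\;\rho\,D_t+\Big(1+\tfrac{1}{\eta_t\mu}\Big)\Big(4\,e_{\cdot,t+1}+\tfrac{32L^2}{\mu^2(t+1)^2}\Big)\;=:\;\rho\,D_t+c_t.
\end{align*}
Because $\eta_t\mu=4T^{-\alpha}$ is constant in $t$ for fixed $T$, unrolling the geometric recursion and summing gives $\mathfrak{R}_{SPP}=\sum_{s=0}^{T-1}D_s\le \frac{D_0}{1-\rho}+\frac{1}{1-\rho}\sum_{t}c_t$, with $1-\rho=\Theta(T^{-\alpha})$.

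Finally, I would substitute the prescribed parameters and add up the forcing terms. The choices of $m_t^{X[Y]}$ and $\nu_{X[Y]}$ make the gradient-estimation part of $e_{\cdot,t+1}$ of order $\eta_t^{4}(\sigma^2+1)$ under the first choice \eqref{eq:statepsdeletamtchoicezero1}/\eqref{eq:statepsdeletamtchoicefirst1} and of order $\eta_t^{6}(\sigma^2+1)$ under the second \eqref{eq:statepsdeletamtchoicezero2}/\eqref{eq:statepsdeletamtchoicefirst2}, while $\sum_t(t+1)^{-2}=\mathcal{O}(1)$. Since $1/(1-\rho)=\mathcal{O}(T^{\alpha})$ and $1+1/(\eta_t\mu)=\mathcal{O}(T^{\alpha})$, the noise term contributes $\mathcal{O}(T^{1-2\alpha}(\sigma^2+1))$ (first choice) or $\mathcal{O}(T^{1-4\alpha}(\sigma^2+1))$ (second choice), the drift term contributes $\mathcal{O}(T^{2\alpha})$, and the initial term $\mathcal{O}(T^{\alpha})$. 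Taking $\alpha=1/4$ in the first case and $\alpha=1/6$ in the second balances these terms and produces $\mathcal{O}((\sigma^2+1)\sqrt T)$ and $\mathcal{O}((\sigma^2+1)T^{1/3})$, establishing \eqref{eq:rsppboundzero1}--\eqref{eq:rsppboundfirst2}. I expect the main obstacle to be exactly the squaring step: retaining contractivity through the weighted Young inequality (rather than the naive constant $2$) is what makes the geometric sum converge and yields the stated exponents, and careful bookkeeping of the index shift of the saddle point from $\calJ_{t-1}$ to $\calJ_t$, together with its drift contribution, is the other point requiring care.
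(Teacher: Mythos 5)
Your proposal is correct and follows essentially the same route as the paper's proof: a one-step contraction toward the saddle point of $\calJ_t$ via the proximal-point iterate, combined with the extragradient-to-proximal closeness of Lemma~\ref{lm:stocherrorppeg} and the drift bound of Lemma~\ref{lm:optimachange}, then a geometric unrolling under the same parameter balancing. The only difference is cosmetic: you square a norm-level recursion once with a weighted Young inequality of parameter $\eta_t\mu=4T^{-\alpha}$, whereas the paper stays at the squared-distance level and introduces the two parameters $\epsilon=\delta=T^{\alpha}$, which amounts to the same contraction-preserving trick at the same scale.
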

\begin{proof}[Proof of Lemma~\ref{lm:statpathreg}]
Let $r_{t}^s=\|x_{t-1}-u_{t}^*\|^2+\|y_{t-1}-v_{t}^*\|^2$. Let $x_t$, and $\hx_t$ be defined as in Lemma~\ref{lm:stocherrorppeg}. Then using Lemma~\ref{lm:optimachange}, for $\epsilon, \delta>0$, we get
	\begin{align*}
	&r_{t+1}^s=\|x_{t}-u_{t+1}^*\|^2+\|y_{t}-v_{t+1}^*\|^2\\
	\leq & \left(1+\frac{1}{\epsilon}\right)\left(\|x_{t}-u_{t}^*\|^2+\|y_{t}-v_{t}^*\|^2\right)+\left(1+\epsilon\right)\left(\|u_{t+1}^*-u_{t}^*\|^2+\|v_{t+1}^*-v_{t}^*\|^2\right)\\
	\leq & \left(1+\frac{1}{\epsilon}\right)\left(1+\frac{1}{\delta}\right)\left(\|\hx_{t}-u_{t}^*\|^2+\|\hy_{t}-v_{t}^*\|^2\right)\\
	+&\left(1+\frac{1}{\epsilon}\right)\left(1+\delta\right)\left(\|x_{t}-\hx_{t}\|^2+\|y_{t}-\hy_{t}\|^2\right)+\left(1+\epsilon\right)\frac{16L^2}{\mu^2t^2}.
	\end{align*}
	Let $\rho=\frac{1}{1+\eta_t \mu}$. Then using Theorem 2 of \cite{mokhtari2019unified}, and Lemma~\ref{lm:stocherrorppeg}, we get
	\begin{align*}
	\|\hx_{t}-u_{t}^*\|^2+\|\hy_{t}-v_{t}^*\|^2\leq & \rho\left(\|x_{t-1}-u_{t}^*\|^2+\|y_{t-1}-v_{t}^*\|^2\right) \numberthis \label{eq:exgradupdatestat}\\
	\expec{\|\hx_{t}-x_{t}\|^2+\|\hy_{t}-y_{t}\|^2|\calF_{t-1}}\leq & e_{0,t+1,X}+e_{0,t+1,Y}.
	\end{align*}
	Hence, we have
	\begin{align*}
	\expec{r_{t+1}^s|\calF_{t-1}}\leq &
	qr_t^s
	+\left(1+\frac{1}{\epsilon}\right)\left(1+\delta\right)\left(e_{0,t+1,X}+e_{0,t+1,Y}\right)
	+\left(1+\epsilon\right)\frac{16L^2}{\mu^2t^2}, \numberthis \label{eq:relwt1wtstat}
	\end{align*}
	where $q=\left(1+\frac{1}{\epsilon}\right)\left(1+\frac{1}{\delta}\right)\rho$. We'll choose $\eta_t$ to ensure $q<1$. From \eqref{eq:relwt1wtstat} we get,
	\begin{align*}
	\expec{r_{t+1}^s|\calF_{t-1}}\leq &
	q^{t}r_1^s
	+\frac{\left(e_{0,t+1,X}+e_{0,t+1,Y}\right)\left(1+\frac{1}{\epsilon}\right)\left(1+\delta\right)}{1-q} 
	+\left(1+\epsilon\right)\sum_{j=1}^{t}\frac{q^{t-j}}{j^2}. \numberthis \label{eq:relwt1w0stat}
	\end{align*}
	Summing both sides of \eqref{eq:relwt1w0stat} from $t=1$ to $T$ we obtain,
	\begin{align*}
	\sum_{t=1}^{T}\expec{r_{t+1}^s|\calF_{t-1}}\leq &
	\frac{q}{1-q}r_1^s
	+\frac{\left(e_{0,t+1,X}+e_{0,t+1,Y}\right)\left(1+\frac{1}{\epsilon}\right)\left(1+\delta\right)}{1-q}T
	+\frac{1+\epsilon}{1-q}\sum_{t=1}^{T}\frac{1}{t^2}. 
	\end{align*}
	Now, set $\epsilon=\delta=T^\alpha$. We now handle the zeroth-order and first-order setting separately:
	\begin{enumerate}[label=(\alph*)]
	    \item To ensure $q<1$ we choose, $\left(1+\frac{1}{\epsilon}\right)\left(1+\frac{1}{\delta}\right)\frac{1}{1+\eta_t \mu}<1$, i.e., $\eta_t>\frac{\frac{1}{\epsilon}+\frac{1}{\delta}+\frac{1}{\epsilon\delta}}{\mu}$. Choosing, $\nu_{X[Y]}$, $\eta_t$, $m_t^{X[Y]}$, and $\alpha$ as in \eqref{eq:statepsdeletamtchoicezero1}, we have the following set of (in)equalities:
	\begin{align*}
	q=\frac{\left(1+T^{-\alpha}\right)^2}{1+3T^{-\alpha}} \qquad & \qquad\frac{q}{1-q}=\frac{\left(1+T^{\alpha}\right)^2}{T^{\alpha}-1} \\
	\frac{\left(e_{0,t+1,X}+e_{0,t+1,Y}\right)\left(1+\frac{1}{\epsilon}\right)\left(1+\delta\right)}{1-q}T &\leq \frac{a_0\left(\sigma\right)^2T^{1-4\alpha }\left(T^{\alpha}+3\right)\left(1+T^{\alpha}\right)\left(1+T^\alpha \right)}{\mu^4\left(T^{\alpha}-1\right)}\\
	\frac{1+\epsilon}{1-q}&=\frac{T^{\alpha}\left(1+T^\alpha \right)\left(T^{\alpha}+3\right)}{T^{\alpha}-1}
	\end{align*}
	Hence, we obtain
	\begin{align*}
	\mathfrak{R}_{SPP}=\expec{\sum_{t=1}^{T}r_{t}^s}\leq \mathcal{O}\left(\left(\sigma^2+1\right)\sqrt{T}\right).
	\end{align*}
Following the similar approach and by choosing $\nu_{X[Y]}$, $\eta_t$, $m_t^{X[Y]}$, and $\alpha$ as in \eqref{eq:statepsdeletamtchoicezero2}, we get \eqref{eq:rsppboundzero2}.
	\item Choosing, $\eta_t$, $m_t$, and $\alpha$ as in \eqref{eq:statepsdeletamtchoicefirst1}, and \eqref{eq:statepsdeletamtchoicefirst2}, we get \eqref{eq:rsppboundfirst1}, and \eqref{eq:rsppboundfirst2} respectively.
	\end{enumerate}
\end{proof}

\begin{proof}[Proof of Theorem~\ref{th:rsspcor}]
Using Assumption~\ref{as:lip}, Lemma~\ref{lm:BTL}, and Lemma~\ref{lm:optimachange} we have,
\begin{align*}
	&\abs{\sum_{t=1}^{T}f_t\left(x_{t},y_{t}\right) -  \sum_{t=1}^{T}f_t\left(u^*_{T+1},v^*_{T+1}\right)}\\
	\leq & \abs{\sum_{t=1}^{T}f_t\left(x_{t},y_{t}\right) -  \sum_{t=1}^{T}f_t\left(x_{t+1},y_{t+1}\right)}+\abs{\sum_{t=1}^{T}f_t\left(x_{t+1},y_{t+1}\right) -  \sum_{t=1}^{T}f_t\left(u^*_{T+1},v^*_{T+1}\right)}\\
	\leq & L\sum_{t=1}^{T}\left(\|x_t-x_{t+1}\|+\|y_t-y_{t+1}\|\right)+ L\sum_{t=1}^{T}\left(\norm{u^*_t - u^*_{t+1}}+\norm{v^*_t - v^*_{t+1}}+\norm{v^*_{t+1} - y_{t+1}}\right.\\
	&\left.+\norm{u^*_{t+1}-x_{t+1}}\right)\\
	\leq & 2L\sum_{t=1}^{T}\left(\|x_t-x_{t+1}\|+\|y_t-y_{t+1}\|\right)+ \sum_{t=1}^{T}\frac{4L^2}{\mu t}+L\sum_{t=1}^{T}\left(\norm{v^*_{t+1} - y_{t}}+\norm{u^*_{t+1}-x_{t}}\right)\\
	\leq & 2\eta_t L^2T+\sum_{t=1}^{T}\frac{4L^2}{\mu t}+ L\sum_{t=1}^{T}\left(\norm{u^*_{t+1}-x_{t}}+\norm{v^*_{t+1} - y_{t}}\right).
	\end{align*}
	Squaring both sides we get,
\begin{align*}
\mathfrak{R}_{SSP}^2\leq & 12\eta_t^2 L^4T^2+3\left(\sum_{t=1}^{T}\frac{4L^2}{\mu t}\right)^2+ L^2\left(\sum_{t=1}^{T}\left(\norm{u^*_{t+1}-x_{t}}+\norm{v^*_{t+1} - y_{t}}\right)\right)^2\\
\leq & 12\eta_t^2 L^4T^2+3\left(\sum_{t=1}^{T}\frac{4L^2}{\mu t}\right)^2+ TL^2\sum_{t=1}^{T}\left(\norm{u^*_{t+1}-x_{t}}+\norm{v^*_{t+1} - y_{t}}\right)^2\\
\leq & 12\eta_t^2 L^4T^2+3\left(\sum_{t=1}^{T}\frac{4L^2}{\mu t}\right)^2+ 2TL^2\sum_{t=1}^{T}\left(\norm{u^*_{t+1}-x_{t}}^2+\norm{v^*_{t+1} - y_{t}}^2\right)\\
\leq & 12\eta_t^2 L^4T^2+3\left(\sum_{t=1}^{T}\frac{4L^2}{\mu t}\right)^2+ 2TL^2\mathfrak{R}_{SPP}\\
\mathfrak{R}_{SSP}\leq & 4\eta_t L^2T+8\sum_{t=1}^{T}\frac{L^2}{\mu t}+ 2L\sqrt{T\mathfrak{R}_{SPP}}.
\end{align*}
Set, $\epsilon=\delta=T^\alpha$. We now prove the zeroth-order and first-order setting separately.
\begin{enumerate}[label=(\alph*)]
\item Choosing $\alpha=\frac{1}{4}$, $\eta_t$, $\nu_{X[Y]}$, and $m_t^{X[Y]}$ as in \eqref{eq:statepsdeletamtchoicezero1}, and \eqref{eq:statepsdeletamtchoicezero2} we get \eqref{eq:rsspboundzero1}, and \eqref{eq:rsspboundzero2} respectively.
\item Choosing $\alpha=\frac{1}{4}$, $\eta_t$, $\nu$, and $m_t$ as in \eqref{eq:statepsdeletamtchoicefirst1}, and \eqref{eq:statepsdeletamtchoicefirst2} we get \eqref{eq:rsspboundfirst1}, and \eqref{eq:rsspboundfirst2} respectively.
\end{enumerate}
\end{proof}
\subsection{Static Regret Bounds for Frank-Wolfe Method}\label{sec:statfuncvalfw}
We now proceed to analyze Frank-Wolfe algorithms for the online saddle-point optimization problem~\eqref{eq:nssaddle}. Recall that in our analysis of the extragradient method in Section~\ref{sec:staticegmethod}, it was assumed that the problem was unconstrained, i.e., $\mathcal{X}= \mathbb{R}^{d_X}$ and $\mathcal{Y} = \mathbb{R}^{d_Y}$. Our main motivation in this section is to relax this assumption and to handle the constrained case, i.e., $\mathcal{X}$ and $\mathcal{Y}$ could be closed convex sets subset of $\mathbb{R}^{d_X}$ and $\mathbb{R}^{d_X}$ respectively. We now present our main result on bounding the static regret of Frank-Wolfe algorithm.

\begin{theorem}
	\label{th:static_ol_firstOrder}
	Let $\{ f_t\}_{t=1}^T$ be an arbitrary sequence of functions for which Assumptions~\ref{as:strongconcon}, \ref{as:lip}, and \ref{as:lipgrad} hold. Then, if $C_0\geq 0$, (where $C_0$ is as defined in Theorem~\ref{theo:offl_nonadapt}), we have the following:
	\begin{enumerate} [label=(\alph*)] 
	\item Under the availability of the stochastic zeroth-order oracle, choosing
	\begin{align} \label{eq:inductionzero}
    \gamma_t = \frac{1}{\sqrt{t}} \quad m_t^{X[Y]}=2\left(d_{X[Y]}+5\right)t \quad \nu_{X[Y]}=\frac{\sqrt{2}}{\left(d_{X[Y]}+3\right)^\frac{3}{2}\sqrt{t}}
\end{align}
	we obtain 
	\begin{align}\label{eq:rsspboundfwzero}
		\mathfrak{R}_{SSP} \leq \order((1+\sigma)T^{\frac{3}{4}}).
	\end{align}
Hence, the total number of calls to the stochastic zeroth-order oracle is $\order((d_X+d_Y)T^2)$.
	\item Under the availability of the stochastic first-order oracle, choosing 
	\begin{align} \label{eq:inductionfirst}
    \gamma_t = \frac{1}{\sqrt{t}} \quad m_t^{X[Y]}=m_t=t,
\end{align}
	we obtain
	\begin{align}\label{eq:rsspboundfwfirst}
	\mathfrak{R}_{SSP} \leq \order\left((1+\sigma)T^{\frac{3}{4}}\right).
	\end{align}
	\end{enumerate}
\end{theorem}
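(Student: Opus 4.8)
The plan is to mirror the structure of the proof of Theorem~\ref{th:rsspcor}, replacing the extragradient-specific contraction (Lemma~\ref{lm:statpathreg}) with a Frank--Wolfe analogue, since Lemma~\ref{lm:BTL} and Lemma~\ref{lm:optimachange} depend only on the sequence $\{f_t\}$ and the iterates and therefore apply verbatim. First I would run the same reduction: write $\mathfrak{R}_{SSP}=\expec{\abs{\sum_t f_t(x_t,y_t)-\sum_t f_t(u^*_{T+1},v^*_{T+1})}}$, split off the telescoping correction $\sum_t\big(f_t(x_t,y_t)-f_t(x_{t+1},y_{t+1})\big)$, apply Lemma~\ref{lm:BTL} to the remainder, and control $\norm{u^*_t-u^*_{t+1}}+\norm{v^*_t-v^*_{t+1}}\le 4L/(\mu t)$ by Lemma~\ref{lm:optimachange}. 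After the reindexing and the squaring/Cauchy--Schwarz step carried out in the proof of Theorem~\ref{th:rsspcor}, this yields
\[
\mathfrak{R}_{SSP}\le 2L\sum_{t=1}^{T}\big(\norm{x_t-x_{t+1}}+\norm{y_t-y_{t+1}}\big)+\sum_{t=1}^{T}\frac{8L^2}{\mu t}+2L\sqrt{T\,\mathfrak{R}_{SPP}},
\]
with $\mathfrak{R}_{SPP}$ as in~\eqref{eq:sppstatic}.

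The two explicit sums are immediate. The iterate-movement term is controlled by the Frank--Wolfe update~\eqref{eq:alg2update}, since $\norm{x_t-x_{t+1}}+\norm{y_t-y_{t+1}}=\gamma_t\norm{s_t-z_t}\le\gamma_t(D_\calX+D_\calY)$, which sums to $\order(\sqrt T)$ for $\gamma_t=1/\sqrt t$; the harmonic sum is $\order(\log T)$. Everything therefore reduces to proving the Frank--Wolfe counterpart of Lemma~\ref{lm:statpathreg}, namely $\mathfrak{R}_{SPP}=\order((1+\sigma^2)\sqrt T)$. To this end I would introduce the merit function $W_t:=\calJ_t(x_t,v^*_{t+1})-\calJ_t(u^*_{t+1},y_t)$, the suboptimality of $(x_t,y_t)$ for the averaged game $\calJ_t$ whose saddle point is $(u^*_{t+1},v^*_{t+1})$. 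Strong convexity--concavity (Assumption~\ref{as:strongconcon}) gives $\norm{u^*_{t+1}-x_t}^2+\norm{v^*_{t+1}-y_t}^2\le (2/\mu)W_t$, so it suffices to show $\expec{W_t}=\order((1+\sigma^2)t^{-1/2})$.

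I would prove this by induction, establishing a one-step recursion
\[
\expec{W_{t+1}}\le (1-c\,\gamma_t)\,\expec{W_t}+C_1\gamma_t^2+\order\!\big((1+\sigma^2)/t\big),\qquad c=\Theta(C_0)>0,
\]
assembled from three ingredients. First, a single Frank--Wolfe step on $\calJ_t$ with the biased, noisy estimator $\bar G_t$: convexity--concavity gives the gap bound $\wh g_t\ge W_t$, which drives the $(1-c\gamma_t)$ contraction, while the cross-coupling term produced by the mismatch between the gradient $\nabla_x\calJ_t(x_t,y_t)$ used to build the Frank--Wolfe direction and the gradient $\nabla_x\calJ_t(x_t,v^*_{t+1})$ needed for the smoothness descent of $W_t$ is bounded by $L_{XY}\norm{y_t-v^*_{t+1}}D_\calX$ via Assumption~\ref{as:crosslipgrad}; it is precisely here that $C_0>0$ is used to keep the coupled $x$/$y$ recursion contractive, adapting the offline descent behind Theorem~\ref{theo:offl_nonadapt}. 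Second, the passage $\calJ_{t+1}=\frac{t}{t+1}\calJ_t+\frac{1}{t+1}f_{t+1}$ changes the merit by $\order(1/t)$ through Assumption~\ref{as:lip}, and the saddle point moves by $\order(1/t)$ by Lemma~\ref{lm:optimachange}. Third, the gradient-estimation error is handled by Lemma~\ref{lm:zerograd}: with $m_t^{X[Y]}=2(d_{X[Y]}+5)t$ and $\nu_{X[Y]}=\sqrt2\,(d_{X[Y]}+3)^{-3/2}t^{-1/2}$ in~\eqref{eq:inductionzero} (resp. $m_t=t$ in~\eqref{eq:inductionfirst}) both the variance and the squared bias are $\order((1+\sigma^2)/t)$. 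Solving the recursion with $\gamma_t=1/\sqrt t$ gives $\expec{W_t}=\order((1+\sigma^2)/\sqrt t)$ and hence $\mathfrak{R}_{SPP}\le(2/\mu)\sum_t\expec{W_t}=\order((1+\sigma^2)\sqrt T)$.

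Substituting back into the reduced inequality, $\mathfrak{R}_{SSP}\le\order(\sqrt T)+\order(\log T)+2L\sqrt{T\cdot(1+\sigma^2)\sqrt T}=\order((1+\sigma)T^{3/4})$, which is~\eqref{eq:rsspboundfwzero} and~\eqref{eq:rsspboundfwfirst}; the zeroth- and first-order cases differ only in which bound of Lemma~\ref{lm:zerograd} is invoked, and in the zeroth-order case the total oracle cost is $\sum_{t=1}^{T}m_t^{X[Y]}=\order((d_X+d_Y)T^2)$. I expect the main obstacle to be the one-step descent: unlike the extragradient analysis, a single Frank--Wolfe step does not contract the squared distance, so the argument must proceed through the merit/gap function, and the coupling between the minimization and maximization updates must be absorbed carefully—this is the step that genuinely requires $C_0>0$ and amounts to lifting the offline saddle-point Frank--Wolfe analysis underlying Theorem~\ref{theo:offl_nonadapt} to the nonstationary, stochastic, biased-gradient regime.
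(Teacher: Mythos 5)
Your proposal is correct and takes essentially the same route as the paper's proof: the same reduction via Lemma~\ref{lm:BTL} and Lemma~\ref{lm:optimachange}, the same Frank--Wolfe movement bound $\gamma_t(D_\calX+D_\calY)$, and the same key induction showing the online merit $\expec{\calJ_t(x_t,v^*_{t+1})-\calJ_t(u^*_{t+1},y_t)}=\order\left((1+\sigma^2)t^{-1/2}\right)$ driven by the offline one-step inequality \eqref{eq:standard_form} — your identification of the gap bound $\wh{g}_t\geq W_t$, the cross-coupling term absorbed via Assumption~\ref{as:crosslipgrad} and $C_0>0$, and the $\order(1/t)$ drift/averaging corrections is precisely the content of Lemma~\ref{lm:induct_helper} and Lemma~\ref{lm:induction}. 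The only deviation is cosmetic and occurs in the final assembly: you square the regret bound and pass through $\mathfrak{R}_{SPP}$ with Cauchy--Schwarz, mirroring Theorem~\ref{th:rsspcor}, whereas the paper converts per term via strong convexity and Jensen, bounding $\expec{\sqrt{W_t}}\leq\sqrt{\expec{W_t}}=\order(t^{-1/4})$ and summing directly to $T^{3/4}$; the two assemblies are equivalent up to constants and yield the identical bound \eqref{eq:rsspboundfwzero}--\eqref{eq:rsspboundfwfirst} and oracle count $\order((d_X+d_Y)T^2)$.
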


\begin{remark}
It is worth comparing the batch size $m_t$ and smoothing parameter $\nu$ to the offline setting. Notice in Theorem~\ref{theo:offl_nonadapt}, we required (with $T$ denoting the number of iterations) $m_t = \order(dT^2)$, $\nu_t = \order(T^{-1}d^{-\frac{3}{2}})$. In the online setting, picking $m_t = \order(dT)$, $\nu_t = \order(T^{-\frac{1}{2}}d^{-\frac{3}{2}})$, is sufficient to obtain the above mentioned static regret bound.
\end{remark}
Before we prove Theorem~\ref{th:static_ol_firstOrder}, we provide a proof-sketch. Similar to the unconstrained setting, we first use Lemma~\ref{lm:BTL} to show that if the decision maker could play $\left(x_{t+1},y_{t+1}\right)$ for $f_t$, then $\mathfrak{R}_{SSP}$, could be decomposed as sum of two terms (as in the unconstrained case). The first term is handled using Lemma~\ref{lm:optimachange}. Bounding the second term, is a bit more involved than the unconstrained case. Indeed, Frank-Wolfe method progresses by reducing the merit function at each iteration as shown in Theorem~\ref{theo:offl_nonadapt}, whereas the Extragradient method progresses by reducing the distance from the saddle point. So in Lemma~\ref{lm:induct_helper}, and \ref{lm:induction} we first show the following term, $\expec{\calJ_{t}(x_{t},u^*_{t+1}) - \calJ_{t}(v^*_{t+1},y_{t})}$, which we refer to as the online merit function,  decreases as $\mathcal{O}\left(1/\sqrt{t}\right)$. Then we will show, using strong convexity-strong concavity (Assumption~\ref{as:strongconcon}) of the function sequence that the second term is upper bounded by the sum of the square roots of the online merit functions up to a constant. Combining all the above steps and picking the tuning parameters appropriately proves Theorem~\ref{th:static_ol_firstOrder}.
\begin{lemma}\label{lm:induct_helper}
Under Assumption~\ref{as:strongconcon} and~\ref{as:lip}, if $$\calJ_{t-1}(x_{t-1},u^*_{t}) - \calJ_{t-1}(v^*_{t},y_{t-1}) \leq C(t-1)^{-\beta},$$ for some constant $C>0$ and $\beta> 0$, then we have
 \begin{align*}
    f_t(x_t,u^*_{t+1}) - f_t(v^*_{t+1},y_t) \leq L\sqrt{\frac{2C}{\mu}}(t-1)^{-\beta/2}+ \frac{4L^2}{\mu t} + L(D_\calX+D_\calY)\gamma_{t-1}. \numberthis\label{eq:ifinductrue}
\end{align*}
\end{lemma}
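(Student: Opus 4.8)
The plan is to reduce the single-step gap $f_t(x_t,u^*_{t+1}) - f_t(v^*_{t+1},y_t)$ to the distance between the iterate $(x_t,y_t)$ and the saddle point of $\calJ_t$, and then to track that distance back to the quantity controlled by the hypothesis at time $t-1$. Since this gap compares $(x_t,y_t)$ against the saddle point of $\calJ_t$ (whose minimizing coordinate is $v^*_{t+1}$ and maximizing coordinate is $u^*_{t+1}$), I would first apply Assumption~\ref{as:lip} one coordinate at a time, moving $x_t$ to $v^*_{t+1}$ at fixed $y=u^*_{t+1}$ and then $u^*_{t+1}$ to $y_t$ at fixed $x=v^*_{t+1}$, to obtain
\begin{align*}
f_t(x_t,u^*_{t+1}) - f_t(v^*_{t+1},y_t) \le L\left(\norm{x_t - v^*_{t+1}} + \norm{y_t - u^*_{t+1}}\right).
\end{align*}
The remaining task is to control the right-hand side using the hypothesis.

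Next I would insert $(x_{t-1},y_{t-1})$ and $(v^*_t,u^*_t)$ and use the triangle inequality to split each norm into three contributions: the distance of the previous iterate to the previous saddle point, the drift of the saddle point between steps $t-1$ and $t$, and the movement of the iterate between steps $t-1$ and $t$. These are bounded separately. For the first, strong convexity--strong concavity (Assumption~\ref{as:strongconcon}, which $\calJ_{t-1}$ inherits from the $F_i$'s through the averaging in~\eqref{eq:smoothedfunction}) turns the merit hypothesis into a distance bound,
\begin{align*}
\norm{x_{t-1}-v^*_t}^2 + \norm{y_{t-1}-u^*_t}^2 \le \frac{2}{\mu}\left(\calJ_{t-1}(x_{t-1},u^*_t)-\calJ_{t-1}(v^*_t,y_{t-1})\right) \le \frac{2C}{\mu}(t-1)^{-\beta},
\end{align*}
so that $\norm{x_{t-1}-v^*_t}+\norm{y_{t-1}-u^*_t} \le \sqrt{2C/\mu}\,(t-1)^{-\beta/2}$. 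For the second, Lemma~\ref{lm:optimachange} gives the saddle drift $\norm{v^*_{t+1}-v^*_t}+\norm{u^*_{t+1}-u^*_t}\le 4L/(\mu t)$ directly. For the third, the Frank-Wolfe update $z_t=(1-\gamma_{t-1})z_{t-1}+\gamma_{t-1}s_{t-1}$ with $s_{t-1}\in\calX\times\calY$ gives $x_t-x_{t-1}=\gamma_{t-1}(s^x_{t-1}-x_{t-1})$ and likewise for $y$, so that $\norm{x_t-x_{t-1}}\le\gamma_{t-1}D_\calX$ and $\norm{y_t-y_{t-1}}\le\gamma_{t-1}D_\calY$ by the set diameters. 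Substituting these three bounds and multiplying through by $L$ reproduces exactly the three terms of~\eqref{eq:ifinductrue}.

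The only substantive departure from the unconstrained analysis lies in the third piece: here the per-step iterate movement is governed by the step size $\gamma_{t-1}$ times the diameters $D_\calX,D_\calY$, rather than by a gradient step of size $\eta_{t-1}$, and this is precisely what produces the $L(D_\calX+D_\calY)\gamma_{t-1}$ term. I do not anticipate a genuine obstacle, as each estimate is elementary; the only care required is the bookkeeping of which coordinate of the smoothed saddle point plays the minimizing versus maximizing role, so that every Lipschitz and strong-convexity inequality is applied along the correct variable.
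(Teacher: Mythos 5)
Your proposal is correct and follows essentially the same route as the paper's proof: both convert the hypothesis into the distance bound $\norm{x_{t-1}-v^*_t}+\norm{y_{t-1}-u^*_t}\leq\sqrt{2C/\mu}\,(t-1)^{-\beta/2}$ via Assumption~\ref{as:strongconcon}, bound the saddle drift by $4L/(\mu t)$ via Lemma~\ref{lm:optimachange}, and bound the iterate movement by $(D_\calX+D_\calY)\gamma_{t-1}$ from the Frank--Wolfe update, then combine with the triangle inequality and Assumption~\ref{as:lip}. The only difference is cosmetic: you spell out the coordinate-wise Lipschitz decomposition and the three-way triangle split that the paper compresses into its final sentence.
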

\begin{proof}[Proof of Lemma~\ref{lm:induct_helper}]
By Assumption~\ref{as:strongconcon}, we have
\begin{align*}
    \norm{x_{t-1}-v^*_t}+\norm{y_{t-1}-u^*_t}
    \leq \sqrt{\frac{2}{\mu(t-1)}\sum_{\tau=1}^{t-1}\left(f_\tau(x_{t-1},u^*_t) - f_\tau(v^*_t,y_{t-1}) \right)}
    \leq \sqrt{\frac{2C}{\mu}}(t-1)^{-\beta/2}. 
\end{align*}
By Lemma~\ref{lm:optimachange}, we also have
\begin{align*}
    \norm{v^*_{t+1}-v^*_t}+\norm{u^*_{t+1}-u^*_t} 
    \leq \frac{4L}{\mu t}.
\end{align*}
Furthermore,
\begin{align*}
    \norm{x_t-x_{t-1}}+\norm{y_t-y_{t-1}} 
    =&\norm{(1-\gamma_t)x_{t-1}+\gamma_t s_t^x - x_{t-1}} \norm{(1-\gamma_t)y_{t-1}+\gamma_t s_t^y - y_{t-1}}\\
    =&~ \gamma_{t-1} (\norm{x_{t-1}-s_t^x} + \norm{y_{t-1}-s_t^y})=(D_\calX+D_\calY)\gamma_{t-1}.
\end{align*}
Adding the above inequalities, using triangle inequality, and Assumption~\ref{as:lip}, we have \eqref{eq:ifinductrue}.
\end{proof}
\begin{lemma}
\label{lm:induction} 
Consider the setting of Theorem~\ref{th:static_ol_firstOrder}. Then, 
\begin{enumerate}[label=(\alph*)]
\item in the zeroth-order stochastic setting, choosing $\gamma_t$, $\nu_{X[Y]}$, and $m_t^{X[Y]}$ as in \eqref{eq:inductionzero}
we get
\begin{align*}
    \expec{\calJ_{t}(x_{t},u^*_{t+1}) - \calJ_{t}(v^*_{t+1},y_{t})} \leq C_5t^{-\frac{1}{2}} \qquad \forall t. 
\end{align*}
\item in the first-order stochastic setting, choosing choosing $\gamma_t$, and $m_t$ as in \eqref{eq:inductionfirst}
we get
\begin{align*}
    \expec{\calJ_{t}(x_{t},u^*_{t+1}) - \calJ_{t}(v^*_{t+1},y_{t})} \leq C_6t^{-\frac{1}{2}} \qquad \forall t. 
\end{align*}
\end{enumerate}
where 
\begin{align*}
C_5 =C_6+\frac{2}{C_0L_G}\left(2L^2+3L_{GX}^2+3L_{GY}^2\right) , \quad
C_6=\frac{L^2}{C_0 \mu}\left(8+\frac{2}{C_0}\right)+\frac{2C_1}{C_0}+\frac{2L(D_\calX+D_\calY)}{C_0}+\frac{2\sigma^2}{C_0L_G}. 
\end{align*}
\end{lemma}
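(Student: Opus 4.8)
The plan is to prove both parts by a single induction on $t$, with inductive hypothesis that the online merit function $\Phi_{t} := \calJ_{t}(x_{t},u^*_{t+1}) - \calJ_{t}(v^*_{t+1},y_{t})$ satisfies $\expec{\Phi_{t}}\leq C\,t^{-1/2}$, with $C=C_5$ in the zeroth-order case and $C=C_6$ in the first-order case. The base case $t=1$ is immediate: since $\calJ_1=f_1$ and $\calX\times\calY$ is bounded, Assumption~\ref{as:lip} gives $\Phi_1\leq L(D_\calX+D_\calY)$, which lies below both $C_5$ and $C_6$ by construction. For the inductive step I would exploit the averaging identity $\calJ_t=\frac{t-1}{t}\calJ_{t-1}+\frac{1}{t}f_t$ to split
\begin{align*}
\Phi_t = \frac{t-1}{t}\Big[\calJ_{t-1}(x_t,u^*_{t+1}) - \calJ_{t-1}(v^*_{t+1},y_t)\Big] + \frac{1}{t}\Big[f_t(x_t,u^*_{t+1}) - f_t(v^*_{t+1},y_t)\Big],
\end{align*}
so the problem decouples into an old-function piece, to which a Frank--Wolfe contraction applies, and a new-function piece, which is a pure drift term.

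The new-function piece is precisely the quantity bounded by Lemma~\ref{lm:induct_helper}: under the inductive hypothesis with $\beta=1/2$ it is at most $L\sqrt{2C/\mu}\,(t-1)^{-1/4}+4L^2/(\mu t)+L(D_\calX+D_\calY)\gamma_{t-1}$, so after the factor $1/t$ it is of order $t^{-5/4}+t^{-2}+t^{-3/2}$ and is negligible against the eventual $t^{-1/2}$ target. For the old-function piece I would first swap the saddle point $(u^*_{t+1},v^*_{t+1})$ of $\calJ_t$ for the saddle point $(u^*_t,v^*_t)$ of $\calJ_{t-1}$, which costs only $L(\norm{u^*_{t+1}-u^*_t}+\norm{v^*_{t+1}-v^*_t})\leq 4L^2/(\mu t)$ by Assumption~\ref{as:lip} and Lemma~\ref{lm:optimachange}; the bracket then becomes $\calJ_{t-1}(x_t,u^*_t)-\calJ_{t-1}(v^*_t,y_t)$, i.e.\ the $\calJ_{t-1}$-merit of the iterate $(x_t,y_t)$ generated by the Frank--Wolfe step of round $t-1$.

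The core of the argument is the single-step Frank--Wolfe descent for the \emph{fixed} function $\calJ_{t-1}$, taken from the per-iteration inequality underlying Theorem~\ref{theo:offl_nonadapt}: combining $L_G$-smoothness (the curvature term $\gamma_{t-1}^2 C_1$) with the gap-to-merit bound supplied by strong convexity--concavity and the interior-saddle/border-distance condition (through $C_0$ and $\delta_\mu$) gives $\expec{\calJ_{t-1}(x_t,u^*_t)-\calJ_{t-1}(v^*_t,y_t)\mid\calF_{t-1}}\leq (1-c\,\gamma_{t-1})\Phi_{t-1}+\gamma_{t-1}^2 C_1+(\text{gradient-error term})$, with a contraction rate $c$ governed by $C_0$ and $\delta_\mu$. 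The gradient-error term is where the two settings separate: in the zeroth-order case it is the bias-plus-variance bound of Lemma~\ref{lm:zerograd}, and in the first-order case it is the pure variance $\sigma^2/m_{t-1}$. With the choices in \eqref{eq:inductionzero} (resp.\ \eqref{eq:inductionfirst}), namely $\gamma_t=t^{-1/2}$, $m_t^{X[Y]}=\order(d_{X[Y]}\,t)$ (resp.\ $m_t=t$) and the stated $\nu_{X[Y]}$, every error term becomes of order $t^{-1}$, and taking total expectation yields a recursion of the shape $\expec{\Phi_t}\leq \frac{t-1}{t}(1-c\,t^{-1/2})\expec{\Phi_{t-1}}+E\,t^{-1}$ up to the smaller drifts, where $E$ collects $C_1$, $L^2/\mu$, $L(D_\calX+D_\calY)$ and the variance/bias constants.

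It then remains to check that this recursion closes at the claimed $t^{-1/2}$ rate. Substituting $\expec{\Phi_{t-1}}\leq C(t-1)^{-1/2}$, the contraction produces a saving of order $cC\,t^{-1}$ that must dominate the aggregate error $E\,t^{-1}$, which forces $C\gtrsim E/c\propto E/C_0$; this is exactly why every summand of $C_5$ and $C_6$ carries a $1/C_0$ factor, and why $C_5=C_6+\frac{2}{C_0L_G}(2L^2+3L_{GX}^2+3L_{GY}^2)$ exceeds $C_6$ only by the additional smoothing-bias constant coming from Lemma~\ref{lm:zerograd}. I expect the principal obstacle to be the third step: faithfully converting the offline Frank--Wolfe descent into a clean one-step contraction $(1-c\gamma_{t-1})$ in the presence of \emph{biased} zeroth-order gradients, and then verifying that the $\frac{t-1}{t}$ averaging factor and this contraction combine so that the bound $\tfrac{t-1}{t}(1-c\,t^{-1/2})C(t-1)^{-1/2}+E\,t^{-1}\leq C\,t^{-1/2}$ holds for every $t\geq 1$ rather than only asymptotically.
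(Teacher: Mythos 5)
Your proposal is correct and follows essentially the same route as the paper's proof: the same induction on $t$ with the averaging identity $\calJ_t=\frac{t-1}{t}\calJ_{t-1}+\frac{1}{t}f_t$, the saddle-point swap costing $4L^2/(\mu t)$ via Lemma~\ref{lm:optimachange}, the drift term via Lemma~\ref{lm:induct_helper}, and the one-step Frank--Wolfe contraction \eqref{eq:standard_form} (together with $w_{k}\leq \wh{g}_{k}$, yielding the factor $1-C_0\gamma_{t-1}$) whose biased zeroth-order gradient error is controlled exactly by Lemma~\ref{lm:zerograd}/\eqref{eq:grad_est_error}. The obstacle you flag at the end is already resolved in the paper by \eqref{eq:standard_form} (established in the proof of Theorem~\ref{theo:offl_nonadapt}, where the bias enters only through the $\frac{1}{2L_G}\left(\norm{\Delta_t^x}^2+\norm{\Delta_t^y}^2\right)$ term), and the constants $C_5$, $C_6$ are sized precisely as you predict so that the $C_0$-contraction saving absorbs the aggregated $\order(t^{-1})$ errors for all $t$.
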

\begin{proof}[Proof of Lemma~\ref{lm:induction}]
We provide the proof by induction for part (b). The proof for part (a) is very similar. It can be easily seen that the base case is true by Assumption~\ref{as:lip}.
\begin{align*}
   \text{Base Case t=1:}\quad &\calJ_1(x_1,u^*_2) - \calJ_1(v^*_{2},y_{1}) \leq C_6 \\
   \text{Inductive Assumption:} \quad &\calJ_{t-1}(x_{t-1},u^*_{t}) - \calJ_{t-1}(v^*_{t},y_{t-1}) \leq C_6t^{-\beta}
\end{align*}
We will prove the induction for $\beta = \frac{1}{2}$. Using Assumption~\ref{as:lip}, \eqref{eq:standard_form} in Appendix, Lemma~\ref{lm:optimachange}, and choosing $\gamma_{t}$ as in \eqref{eq:inductionfirst}, we get
\begin{align*}
    &~~~~\calJ_{t-1}(x_t,u^*_{t+1}) - \calJ_{t-1}(v^*_{t+1},y_t)\\
    &\leq \calJ_{t-1}(x_t,u^*_t) - \calJ_{t-1}(v^*_t,y_t) + L\left( \norm{v^*_{t+1}-v^*_{t}}+ \norm{u^*_{t+1}-u^*_{t}}\right) \\
    &\leq (1-C_0\gamma_{t-1})\left(\calJ_{t-1}(x_{t-1},u^*_t) -  \calJ_{t-1}(v^*_t,y_{t-1})\right) + C_1\gamma^2_{t-1} + \frac{1}{2L_G}(\norm{\Delta_{t-1}^x}^2 + \norm{\Delta_{t-1}^y}^2) + \frac{4L^2}{\mu t} \\
    & \leq C_6(t-1)^{-\beta} - C_6C_0(t-1)^{-\frac{1}{2}-\beta} + \frac{C_1}{t-1}+ \frac{4L^2}{\mu t} + \frac{1}{2L_G}(\norm{\Delta_{t-1}^x}^2 + \norm{\Delta_{t-1}^y}^2).
    \numberthis\label{eq:lminductionsubresa}
\end{align*}
Now using \eqref{eq:grad_est_error}, we get
\begin{align*}
    \expec{\calJ_{t-1}(x_t,u^*_{t+1}) - \calJ_{t-1}(v^*_{t+1},y_t)|\calF_{t-1}}\leq  C_6(t-1)^{-\beta}\left(1 - \frac{C_0}{\sqrt{t-1}}\right) + \frac{C_1}{t-1}+ \frac{4L^2}{\mu t}
    + \frac{\sigma^2}{m_{t-1}L_G}.
\end{align*}
Combining \eqref{eq:lminductionsubresa} with Lemma~\ref{lm:induct_helper}, and choosing $m_t$ as in \eqref{eq:inductionfirst}, we get
\begin{align*}
    &\expec{\calJ_{t}(x_t,u^*_{t+1}) - \calJ_{t}(v^*_{t+1},y_t)|\calF_{t}}
    \leq \frac{1}{t}\left(C_6(t-1)^{1-\beta}  +   C_1 -C_6C_0(t-1)^{\frac{1}{2}-\beta} +  \frac{4L^2}{\mu}+\frac{\sigma^2}{L_G}\right)\\
    & +\frac{1}{t}\left( L\sqrt{\frac{2C_6}{\mu}}(t-1)^{-\beta/2}+\frac{L(D_\calX+D_\calY)}{\sqrt{t-1}}\right)\\
    &\leq C_{6}t^{-\beta}  + \frac{1}{t} \left(C_1-C_{6 }C_0(t-1)^{\frac{1}{2}-\beta} + \frac{4L^2}{\mu} + L\sqrt{\frac{2C_{6 }}{\mu}}(t-1)^{-\beta/2}+\frac{L(D_\calX+D_\calY)}{\sqrt{t-1}}+\frac{\sigma^2}{L_G}\right) \\
    &\leq C_{6}t^{-\beta}.
\end{align*}
The last inequality follows from the fact that the second term is non-positive $\forall t$ when $\beta = \frac{1}{2}$.
\end{proof}
\vspace{0.2in}
\begin{proof}[Proof of~Theorem~\ref{th:static_ol_firstOrder}]
     Using Asusmption~\ref{as:lip}, Lemma~\ref{lm:optimachange}, Lemma~\ref{lm:BTL}, and choosing $\gamma_t$ as in \eqref{eq:inductionzero} we get
\begin{align*}
    &\abs{\sum_{t=1}^T f_t(x_t,y_t) - \sum_{t=1}^T f_t(v^*_{T+1},u^*_{T+1})}
    \leq\abs{\sum_{t=1}^T f_t(x_t,y_t) - \sum_{t=1}^T f_t(x_{t+1},y_{t+1})}\\
    +&L\sum_{t=1}^{T}\left(\norm{u^*_t - u^*_{t+1}}+\norm{v^*_t - v^*_{t+1}}+\norm{v^*_{t+1} - y_{t+1}}
	+\norm{u^*_{t+1}-x_{t+1}}\right)\\
    \leq &~L\sum_{t=1}^{T}\left(\norm{x_t-x_{t+1}}+ \norm{y_t-y_{t+1}}+\frac{4L}{\mu t}+\norm{u^*_{t+1} - x_{t+1}}+\norm{v^*_{t+1}-y_{t+1}}\right) \\
     \leq &~L\sum_{t=1}^{T}\left(2\gamma_t(D_\calX+D_\calY)+ \frac{4L}{\mu t}+\norm{u^*_{t+1} - x_{t}}+\norm{v^*_{t+1}-y_{t}}\right)\\
    \leq &~L\sum_{t=1}^{T}\left(\norm{u^*_{t+1} - x_{t}}+\norm{v^*_{t+1}-y_{t}}\right) + L(D_\calX+D_\calY)\sum_{t=1}^T\frac{1}{\sqrt{t}}+\frac{4L^2}{\mu}\log T\\
    \leq &~L\sum_{t=1}^{T}\left(\sqrt{\frac{2}{\mu} [\calJ_{t}(x_{t},u^*_{t+1})-\calJ_{t}(v^*_{t+1},y_{t})]}\right) + \order(\sqrt{T}).
\end{align*}
Taking conditional expectation on both sides, and using Jensen's inequality,
\begin{align*}
    \expec{\abs{\sum_{t=1}^T f_t(x_t,y_t) - \sum_{t=1}^T f_t(v^*_{T+1},u^*_{T+1})}}\leq \frac{\sqrt{2L^2}}{\sqrt{\mu}}\sum_{t=1}^{T}\sqrt{\expec{\calJ_{t}(x_{t},u^*_{t+1})-\calJ_{t}(v^*_{t+1},y_{t})}} + \order(\sqrt{T}).
\end{align*}
We now proceed diferently for the zeroth and first-order setting.
\begin{enumerate}[label=(\alph*)]
\item Choosing $\gamma_t$, $\nu_{X[Y]}$, and $m_t^{X[Y]}$ as in \eqref{eq:inductionzero}, and using Lemma~\ref{lm:induction}, we get
\begin{align*}
    \expec{\abs{\sum_{t=1}^T f_t(x_t,y_t) - \sum_{t=1}^T f_t(v^*_{T+1},u^*_{T+1})}}\leq \frac{\sqrt{2L^2}}{\sqrt{\mu}}\sum_{t=1}^{T}\frac{C_5}{t^\frac{1}{4}}+ \order(\sqrt{T})\leq \order\left(\left(1+\sigma\right)T^\frac{3}{4}\right).
\end{align*}
\item Choosing $\gamma_t$, and $m_t$ as in \eqref{eq:inductionfirst}, and using Lemma~\ref{lm:induction}, we get \eqref{eq:rsspboundfwfirst}.
\end{enumerate}
\end{proof}


\section{Dynamic Regret Bounds for Saddle-Point Optimization}\label{sec:dynamicsection}
In the nonstationary setting, as the saddle point of the functions change over time, a stronger notion of regret compared to the static regret is worth exploring. For example, in the motivating example in Section~\ref{sec:motiveg}, the players may be interested to be able to play the Nash equilibrium at each time point. In this case, it is required to evaluate an algorithm's performance by how close the generated points are, either in terms of function values or in terms of distance to the saddle-points of the functions, at each time-step instead of the saddle-point of the sum of all the functions. In Sections~\ref{sec:dynamicegmethod} and~\ref{sec:dynfuncvalfw}, we propose such notions of dynamic regret and provide sub-linear regret bounds. 

\subsection{Dynamic Regret Bounds for Extragradient Method}\label{sec:dynamicegmethod}
For the online extragradient algorithm in the unconstrained setting, we propose the following natural notion of dynamic regret.
\begin{definition} \label{def:dsppath}
Let $\lbrace f_t \rbrace_{t=1}^T$ be a sequence of functions satisfying Assumption~\ref{as:strongconcon}. For this class of functions, with $(x_{t}^*,y_{t}^*)$ as defined in~\eqref{eq:nssaddle}, the Dynamic Saddle-Point Path (DSPP) Regret is defined as 
\begin{align}
\mathfrak{R}_{DSPP}:=\sum_{t=1}^{T}\expec{r_t}=\sum_{t=1}^{T}\expec{\|x_{t}-x_{t}^*\|^2+\|y_{t}-y_{t}^*\|^2}. \label{eq:dsppath}
\end{align}
where the expectation is taken w.r.t the filtration generated by $\lbrace x_t,y_t \rbrace_1^T$.
\end{definition}
In the following theorem we state our regret bounds for $\mathfrak{R}_{DSPP}$ using Extra-gradient method.
\begin{theorem}\label{th:dynamicexgrad}
	Let $\lbrace f_t\rbrace_{t=1}^T$ be a sequence of functions for which Assumptions~\ref{as:strongconcon} and \ref{as:lip} hold and such that $\lbrace f_t\rbrace_{t=1}^T \in \mathcal{M}_T$. Let $\alpha,\beta >0$ be some constants. Then, for $V_T\leq T^\frac{\alpha}{\beta}$, running Algorithm~\ref{alg:EG}, we have:
	\begin{enumerate}[label=(\alph*)]
		\item Under the availability of the stochastic zeroth-order oracle, with
		\begin{align}
		\eta_t=\eta=\frac{4T^{-\alpha}V_T^\beta}{\mu} \quad  \nu_{X[Y]}=\eta^2\left(d_{X[Y]}+3\right)^{-\frac{3}{2}} \quad m_t^{X[Y]}=\frac{\left(d_{X[Y]}+5\right)}{\eta^{2}}, \label{eq:epsdeletamtchoicezero1}
		\end{align}
		we obtain,
		\begin{align}
		\mathfrak{R}_{DSPP}\leq \mathcal{O}\left(\sigma^2T^{1-{2\alpha}}V_T^{2\beta}+V_T^{1-2\beta}T^{2\alpha}\right).\label{eq:rdsppboundzero1}
		\end{align}
		Hence, the total number of calls to the stochastic zeroth-order oracle is $\mathcal{O}\left(\left(d_X+d_Y\right)T^{1+2\alpha}V_T^{-2\beta}\right)$. Furthermore, by choosing
 		\begin{align}
		\eta_t=\eta=\frac{4T^{-\alpha}V_T^\beta}{\mu} \quad  \nu_{X[Y]}=\eta^4\left(d_{X[Y]}+3\right)^{-\frac{3}{2}} \quad m_t^{X[Y]}=\frac{\left(d_{X[Y]}+5\right)}{\eta^{4}}, \label{eq:epsdeletamtchoicezero2}
		\end{align}
		we obtain,
		\begin{align}
		\mathfrak{R}_{DSPP}\leq \mathcal{O}\left(\sigma^2T^{1-{4\alpha}}V_T^{4\beta}+V_T^{1-2\beta}T^{2\alpha}\right).\label{eq:rdsppboundzero2}
		\end{align}
		Hence, the total number of calls to the stochastic zeroth-order  oracle is $\mathcal{O}\left(\left(d_X+d_Y\right)T^{1+4\alpha} V_T^{-4\beta}\right)$.
		\item Under the availability of the stochastic first-order oracle, with
		\begin{align}
		\eta_t=\eta=\frac{4T^{-\alpha}V_T^\beta}{\mu} \quad  m_t=\frac{1}{\eta^{2}}, \label{eq:epsdeletamtchoicefirst1}
		\end{align}
		we obtain,
		\begin{align}
		\mathfrak{R}_{DSPP}\leq \mathcal{O}\left(\sigma^2T^{1-{2\alpha}}V_T^{2\beta}+V_T^{1-2\beta}T^{2\alpha}\right).\label{eq:rdsppboundfirst1}
		\end{align}
		Furthermore, by choosing
		\begin{align}
		\eta_t=\eta=\frac{4T^{-\alpha}V_T^\beta}{\mu} \quad m_t=\frac{1}{\eta^{4}}, \label{eq:epsdeletamtchoicefirst2}
		\end{align}
		we obtain,
		\begin{align}
		\mathfrak{R}_{DSPP}\leq \mathcal{O}\left(\sigma^2T^{1-{4\alpha}}V_T^{4\beta}+V_T^{1-2\beta}T^{2\alpha}\right).\label{eq:rdsppboundfirst2}
		\end{align}
	\end{enumerate}
\end{theorem}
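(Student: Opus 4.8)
The plan is to mirror the static path-regret argument of Lemma~\ref{lm:statpathreg}, but to track the iterate against the \emph{instantaneous} saddle point $(x_t^*,y_t^*)$ of $f_t$ rather than against the saddle point of an averaged function, paying for nonstationarity through the drift $\|x_{t+1}^*-x_t^*\|^2+\|y_{t+1}^*-y_t^*\|^2$, whose cumulative sum is $\le V_T$ by the membership $\{f_t\}\in\mathcal{M}_T$. Writing $r_t=\|x_t-x_t^*\|^2+\|y_t-y_t^*\|^2$, the first step is to produce a one-step contraction-with-error recursion for $\expec{r_{t+1}\mid\calF_t}$. For parameters $\epsilon,\delta>0$ I would apply Young's inequality twice: once to peel off the drift of the optimum,
\begin{align*}
r_{t+1}\leq \left(1+\tfrac{1}{\epsilon}\right)\left(\|x_{t+1}-x_t^*\|^2+\|y_{t+1}-y_t^*\|^2\right)+\left(1+\epsilon\right)\left(\|x_{t+1}^*-x_t^*\|^2+\|y_{t+1}^*-y_t^*\|^2\right),
\end{align*}
and once more to compare the extragradient iterate $(x_{t+1},y_{t+1})$ with the proximal-point iterate $(\hat x_{t+1},\hat y_{t+1})$ generated from $(x_t,y_t)$ on $f_t$ (Algorithm~\ref{alg:pp}).

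The two ingredients that close the recursion are: (i) the contraction of the proximal-point step toward the saddle point of $f_t$, namely $\|\hat x_{t+1}-x_t^*\|^2+\|\hat y_{t+1}-y_t^*\|^2\le \rho\, r_t$ with $\rho=\frac{1}{1+\eta\mu}$, obtained exactly as in Lemma~\ref{lm:statpathreg} from Theorem~2 of \cite{mokhtari2019unified} using Assumption~\ref{as:strongconcon}; and (ii) the extragradient-versus-proximal-point deviation $\expec{\|x_{t+1}-\hat x_{t+1}\|^2\mid\calF_t}\le e_{0,t+1,X}$ (resp.\ $e_{1,t+1}$) from Lemma~\ref{lm:stocherrorppeg}. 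Combining these yields
\begin{align*}
\expec{r_{t+1}\mid\calF_t}\le{}& q\, r_t + \left(1+\tfrac{1}{\epsilon}\right)\left(1+\delta\right)\left(e_{0,t+1,X}+e_{0,t+1,Y}\right)\\
&+\left(1+\epsilon\right)\left(\|x_{t+1}^*-x_t^*\|^2+\|y_{t+1}^*-y_t^*\|^2\right),
\end{align*}
with $q=\left(1+\tfrac1\epsilon\right)\left(1+\tfrac1\delta\right)\rho$. Taking full expectations (the optima are deterministic, so the drift and error terms are constants) and iterating, the sum over $t=1,\dots,T$ gives $\mathfrak{R}_{DSPP}\le \frac{1}{1-q}\big(r_1+\sum_t E_{t+1}+(1+\epsilon)\sum_t(\|x_{t+1}^*-x_t^*\|^2+\|y_{t+1}^*-y_t^*\|^2)\big)$, where $E_{t+1}=(1+\tfrac1\epsilon)(1+\delta)(e_{0,t+1,X}+e_{0,t+1,Y})$ and the last sum is at most $V_T$.

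For the bookkeeping I would set $\epsilon=\delta=T^\alpha V_T^{-\beta}$ together with $\eta=\frac{4T^{-\alpha}V_T^\beta}{\mu}$, so that with $s:=T^{-\alpha}V_T^\beta$ one has $q=\frac{(1+s)^2}{1+4s}$ and $1-q=\frac{s(2-s)}{1+4s}=\Theta(s)$. Then $\frac{1}{1-q}=\Theta(T^\alpha V_T^{-\beta})$ and $1+\epsilon=\Theta(T^\alpha V_T^{-\beta})$. Plugging the stated batch sizes and smoothing radii into Lemma~\ref{lm:stocherrorppeg} collapses $e_{0,t+1,X[Y]}$ to leading order $\order\!\big((\sigma^2+1)\eta^4\big)$ under \eqref{eq:epsdeletamtchoicezero1} and $\order\!\big((\sigma^2+1)\eta^6\big)$ under \eqref{eq:epsdeletamtchoicezero2}; carrying the two $\Theta(T^\alpha V_T^{-\beta})$ factors through the stochastic-error sum produces $\sigma^2 T^{1-2\alpha}V_T^{2\beta}$ (resp.\ $\sigma^2 T^{1-4\alpha}V_T^{4\beta}$), while the $V_T$-bounded drift sum produces $T^{2\alpha}V_T^{1-2\beta}$ and the initial term $\order(T^\alpha V_T^{-\beta})$ is dominated. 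The first-order cases are identical using $e_{1,t+1}=\order(\sigma^2\eta^4)$ (resp.\ $\order(\sigma^2\eta^6)$) from part~(b) of the lemma.

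The main obstacle is the joint tuning of $\epsilon,\delta,\eta$: one must keep the contraction factor $q$ strictly below $1$ while preventing the amplifying factors $\frac{1}{1-q}$ and $1+\epsilon$ from overwhelming the gains from the small per-step error and the $V_T$-bounded drift. This is exactly where the standing hypothesis $V_T\le T^{\alpha/\beta}$ is used: it forces $s\le1<2$, which simultaneously guarantees $q<1$ and controls $\frac{1}{1-q}=\Theta(s^{-1})$, and it must be threaded through every bound. A secondary, routine care point is the conditional-expectation accounting when unrolling, which is legitimate because $q$ and the drift are $\calF_t$-measurable, so a tower-of-expectations argument applies.
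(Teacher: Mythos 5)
Your proposal is correct and follows essentially the same route as the paper's proof: the identical one-step recursion obtained from two applications of Young's inequality with parameters $\epsilon,\delta$, the proximal-point contraction $\rho=\frac{1}{1+\eta\mu}$ from Theorem~2 of \cite{mokhtari2019unified}, the extragradient-versus-proximal deviation bounds $e_{0,t+1,X[Y]}$ and $e_{1,t+1}$ from Lemma~\ref{lm:stocherrorppeg}, and the same tuning $\epsilon=\delta=T^{\alpha}V_T^{-\beta}$ with unrolling and summation. Your explicit computation $q=\frac{(1+s)^2}{1+4s}$, $1-q=\frac{s(2-s)}{1+4s}$ with $s=T^{-\alpha}V_T^{\beta}$ is in fact the consistent version of the paper's bookkeeping (which writes $1+3s$ in the denominator of $q$ despite $\eta\mu=4s$, an inconsequential constant slip), and your observation that $V_T\le T^{\alpha/\beta}$ forces $s\le 1<2$ is exactly how $q<1$ is secured.
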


\begin{proof}[Proof of Theorem~\ref{th:dynamicexgrad}]
Let $r_{t}=\|x_{t}-x_{t}^*\|^2+\|y_{t}-y_{t}^*\|^2$ as in Definition~\ref{def:dsppath}. For $\epsilon, \delta>0$ 
	\begin{align*}
	&r_{t+1}=\|x_{t+1}-x_{t+1}^*\|^2+\|y_{t+1}-y_{t+1}^*\|^2\\
	\leq & \left(1+\frac{1}{\epsilon}\right)\left(\|x_{t+1}-x_{t}^*\|^2+\|y_{t+1}-y_{t}^*\|^2\right)+\left(1+\epsilon\right)\left(\|x_{t+1}^*-x_{t}^*\|^2+\|y_{t+1}^*-y_{t}^*\|^2\right)\\
	\leq & \left(1+\frac{1}{\epsilon}\right)\left(1+\frac{1}{\delta}\right)\left(\|\hx_{t+1}-x_{t}^*\|^2+\|\hy_{t+1}-y_{t}^*\|^2\right)\\
	+&\left(1+\frac{1}{\epsilon}\right)\left(1+\delta\right)\left(\|x_{t+1}-\hx_{t+1}\|^2+\|y_{t+1}-\hy_{t+1}\|^2\right)+\left(1+\epsilon\right)\left(\|x_{t+1}^*-x_{t}^*\|^2+\|y_{t+1}^*-y_{t}^*\|^2\right).
	\end{align*}
	We now invoke the following result from \cite{mokhtari2019unified}.
	\begin{lemma}[Theorem 2 in \cite{mokhtari2019unified}]\label{lm:th2mokh}
	Under Assumption~\ref{as:strongconcon}, for any $\eta_t > 0$, the iterates $\lbrace x_t, y_t\rbrace\geq 0$ generated by the proximal point method satisfy
	\begin{align*}
	\|x_{t+1}-x_{t}^*\|^2+\|y_{t+1}-y_{t}^*\|^2\leq & \rho\left(\|x_{t}-x_{t}^*\|^2+\|y_{t}-y_{t}^*\|^2\right),
	\end{align*}
	where $\rho=\frac{1}{1+\eta_t \mu}$.
	\end{lemma}
	Using Lemma~\ref{lm:stocherrorppeg}, and Lemma~\ref{lm:th2mokh}, we get
	\begin{align*}
	&\expec{\|\hx_{t+1}-x_{t+1}\|^2+\|\hy_{t+1}-y_{t+1}\|^2|\calF_t}\leq  \left(e_{0,t+1,X}+e_{0,t+1,Y}\right)\\
	&\|\hx_{t+1}-x_{t}^*\|^2+\|\hy_{t+1}-y_{t}^*\|^2\leq  \rho\left(\|x_{t}-x_{t}^*\|^2+\|y_{t}-y_{t}^*\|^2\right). \numberthis \label{eq:exgradupdate}
	\end{align*}
	Hence, we have
	\begin{align*}
	\expec{r_{t+1}|\calF_t}\leq &~
	qr_t +\left(1+\frac{1}{\epsilon}\right)\left(1+\delta\right)\left(e_{0,t+1,X}+e_{0,t+1,Y}\right)\\
	&~+\left(1+\epsilon\right)\left(\|x_{t+1}^*-x_{t}^*\|^2+\|y_{t+1}^*-y_{t}^*\|^2\right), \numberthis \label{eq:relwt1wt}
	\end{align*}
	where $q=\left(1+\frac{1}{\epsilon}\right)\left(1+\frac{1}{\delta}\right)\rho$. We'll choose $\eta$ to ensure $q<1$. From \eqref{eq:relwt1wt} we get,
	\begin{align*}
	\expec{r_{t+1}|\calF_t}\leq &
	q^{t+1}r_0
	+\frac{\left(e_{0,t+1,X}+e_{0,t+1,Y}\right)\left(1+\frac{1}{\epsilon}\right)\left(1+\delta\right)}{1-q} 
	+\left(1+\epsilon\right)\sum_{j=0}^{t}q^{t-j}d_j, \numberthis \label{eq:relwt1w0}
	\end{align*}
	where $d_j=\left(\|x_{j+1}^*-x_{j}^*\|^2+\|y_{j+1}^*-y_{j}^*\|^2\right)$. Summing both sides of \eqref{eq:relwt1w0} from $t=0$ to $T-1$ we get,
	\begin{align*}
	\sum_{t=0}^{T-1}\expec{r_{t+1}|\calF_t}\leq &
	\frac{q}{1-q}r_0
	+\frac{\left(e_{0,t+1,X}+e_{0,t+1,Y}\right)\left(1+\frac{1}{\epsilon}\right)\left(1+\delta\right)}{1-q}T
	+\frac{1+\epsilon}{1-q}\sum_{t=0}^{T-1}d_t. 
	\end{align*}
	To ensure $q<1$ we choose, $\left(1+\frac{1}{\epsilon}\right)\left(1+\frac{1}{\delta}\right)\frac{1}{1+\eta \mu}<1$, i.e., $\eta>\frac{\frac{1}{\epsilon}+\frac{1}{\delta}+\frac{1}{\epsilon\delta}}{\mu}$. Now, set $\epsilon=\delta=T^\alpha V_T^{-\beta}$. To complete the proof, we handle the zeroth-order and first-order setting separately below:
	\begin{enumerate}[label=(\alph*)]
	    \item Choosing $\eta$, $\nu_{X[Y]}$, and $m_t^{X[Y]}$ as in \eqref{eq:epsdeletamtchoicezero1}, we have the following set of (in)equalities:
	\begin{align*}
	q=\frac{\left(1+T^{-\alpha}V_T^\beta\right)^2}{1+3T^{-\alpha}V_T^\beta}, \quad&\quad \frac{q}{1-q}=\frac{\left(1+T^{\alpha}V_T^{-\beta}\right)^2}{T^{\alpha}V_T^{-\beta}-1}, \\
	\frac{\left(e_{0,t+1,X}+e_{0,t+1,Y}\right)\left(1+\frac{1}{\epsilon}\right)\left(1+\delta\right)}{1-q}T & \leq \frac{a_0\left(\sigma\right)^2T^{1-4\alpha }V_T^{3\beta}\left(T^{\alpha}+4V_T^{\beta}\right)\left(V_T^{\beta}+T^{\alpha}\right)^2}{\mu^4\left(2T^{\alpha}V_T^{\beta}-V_T^{2\beta}\right)},\\
	\frac{1+\epsilon}{1-q}&=\frac{T^{\alpha}V_T^{-\beta}\left(1+T^\alpha V_T^{-\beta}\right)\left(T^{\alpha}V_T^{-\beta}+4\right)}{T^{\alpha}V_T^{-\beta}-1}.
	\end{align*}
	Hence, we have
	\begin{align*}
	\mathfrak{R}_{DSPP}=\expec{\sum_{t=1}^{T}r_{t}}\leq \mathcal{O}\left(\sigma^2T^{1-{2\alpha}}V_T^{2\beta}+V_T^{1-2\beta}T^{2\alpha}\right).
	\end{align*}
	Choosing $\eta$, $\nu_{X[Y]}$, and $m_t^{X[Y]}$ as in \eqref{eq:epsdeletamtchoicezero2}, we get \eqref{eq:rdsppboundzero2}.
	\item The proof is very similar to part (a). Choosing $\eta$, $\nu_{X[Y]}$, and $m_t^{X[Y]}$ as in \eqref{eq:epsdeletamtchoicefirst1}, and \eqref{eq:epsdeletamtchoicefirst2}, we get \eqref{eq:rdsppboundfirst1}, and \eqref{eq:rdsppboundfirst2} respectively. 
	\end{enumerate}
\end{proof}
\begin{remark}
	If $V_T\leq T$, choosing $\alpha=\beta=1/4$ in \eqref{eq:rdsppboundzero1} we get $\mathfrak{R}_{DSPP}\leq \mathcal{O}\left(\sqrt{TV_T}\left(\sigma^2+1\right)\right)$, and by choosing $\alpha=\beta=
	1/6$ in \eqref{eq:rdsppboundzero2} we get $\mathfrak{R}_{DSPP}\leq \mathcal{O}\left(\left(TV_T\right)^\frac{1}{3}\left(\sigma^2+1\right)\right)$. The price we pay for improved bounds is a larger mini-batch size $m_t$ for the gradient estimator which is $\sqrt{T/V_T}$ in the former case compared to $\left(T/V_T\right)^{2/3}$ in the later.
\end{remark}
\begin{remark}
It is possible to consider an alternate regret notion in the unconstrained setting in terms of function-value sub-optimality as well. Specifically, consider the following notion of regret:
\begin{definition} \label{def:dspf}
Let $\lbrace f_t\rbrace_{t=1}^{T}$ be a sequence of functions satisfying Assumption~\ref{as:strongconcon}. For this class of functions, with $(x_{t}^*,y_{t}^*)$ as defined in~\eqref{eq:nssaddle}, the Dynamic Saddle-Point Function-value (DSPF) Regret is defined as 
\begin{align}
\mathfrak{R}_{DSPF}=\expec{\sum_{t=1}^{T}\abs{f_t\left(x_t,y_t\right)-f_t\left(x_t^*,y_t^*\right)}}. \label{eq:dspf}
\end{align}
where the expectation is taken w.r.t the filtration generated by $\lbrace x_t,y_t \rbrace_1^T$.
\end{definition}
We now show that regret bounds for $\mathfrak{R}_{DSPF}$ could be obtained by the corresponding bounds for $\mathfrak{R}_{DSPP}$ obtained in Theorem~\ref{th:dynamicexgrad}. First note that using Assumption~\ref{as:lip}, we obtain
    \begin{align*}
       \expec{\left( \sum_{t=1}^T\abs{f_t\left(x_t,y_t\right)-f_t\left(x_t^*,y_t^*\right)}\right)^2}\leq & \expec{\left( L\sum_{t=1}^T\left(\|x_t-x_t^*\|+\|y_t-y_t^*\|\right)\right)^2}\\
       \leq & 2TL^2\expec{\sum_{t=1}^T\left(\|x_t-x_t^*\|^2+\|y_t-y_t^*\|^2\right)}\\
       \leq & 2TL^2\mathfrak{R}_{DSPP}.
       \end{align*}
       Hence, for the notion of regret in Definition~\ref{def:dspf}, we have
       \begin{align*}
       \expec{\sum_{t=1}^T\abs{f_t\left(x_t,y_t\right)-f_t\left(x_t^*,y_t^*\right)}}\leq & \sqrt{\expec{\left( \sum_{t=1}^T\abs{f_t\left(x_t,y_t\right)-f_t\left(x_t^*,y_t^*\right)}\right)^2}} \leq 2L\sqrt{T\mathfrak{R}_{DSPP}}.
    \end{align*}
\end{remark}
\subsection{Dynamic Regret Bounds for Frank-Wolfe Method}\label{sec:dynfuncvalfw}
For the constrained setting, using Frank-Wolfe algorithm, we propose the following notion of regret based on function values.
\begin{definition} \label{def:dsp2}
Let $\lbrace f_t \rbrace_{t=1}^T$ be a sequence of functions satisfying Assumption~\ref{as:strongconcon}. For this class of functions, with $(x_{t}^*,y_{t}^*)$ as defined in~\eqref{eq:nssaddle}, the Dynamic Saddle-Point Merit (DSPM) Regret is defined as 
\begin{align}
\mathfrak{R}_{DSPM}=\expec{\sum_{t=1}^{T}\left(f_t\left(x_t,y_t\right)-f_t\left(x_t^*,y_t^*\right)\right)^2}.
\end{align}
\end{definition}
In the following theorem we state our regret bounds for $\mathfrak{R}_{DSPM}$ using Frank-Wolfe method.
\begin{theorem}\label{thm:fwdynamicfuncreg}
Let $\lbrace f_t \rbrace_{t=1}^T$ be a sequence of function for which Assumptions~\ref{as:lip}, \ref{as:lipgrad}, \ref{as:crosslipgrad}, and \ref{as:strongconcon} hold. Furthermore, let $\lbrace f_t \rbrace_{t=1}^T\in \mathcal{M}_T~\cap~\mathcal{D}_T$. If $0\leq C_0\leq 1$, where $C_0=1-\frac{\sqrt{2}}{\delta_\mu}\max\left\lbrace\frac{D_\calX L_{XY}}{\sqrt{\mu_Y}},\frac{D_\calY L_{YX}}{\sqrt{\mu_X}}\right\rbrace$ and $\delta_\mu=\sqrt{\min\left(\mu_X\delta_Y^2,\mu_Y\delta_Y^2\right)}$, then running Algorithm~\ref{alg:FWdyn}, we have:
	\begin{enumerate}[label=(\alph*)]
	    \item Under the availability of the stochastic zeroth-order oracle, with
	\begin{align}
	\gamma_t=\gamma=\sqrt{\frac{\max\left(2W_T,V_T\right)}{T}} \quad m_t^{X[Y]}=T\left(d_{X[Y]}+5\right) \quad \nu_{X[Y]}=\frac{1}{\sqrt{T\left(d_{X[Y]}+3\right)^3}},\label{eq:gammamtnuchoicedspmzero}
	\end{align}
	we obtain, 
	\begin{align}
	\mathfrak{R}_{DSPM}\leq \mathcal{O}\left(\sqrt{T\max\left(2W_T,V_T\right)}\left(1+\frac{\sigma^2}{\max\left(2W_T,V_T\right)}\right)\right). \label{eq:rdspmboundzero}
	\end{align}
	Hence, the total number of calls to the stochastic zeroth-order oracle is $\mathcal{O}\left(\left(d_X+d_Y\right)T^2\right)$.
	\item Under the availability of the stochastic first-order oracle, with
	\begin{align}
	\gamma_t=\gamma=\sqrt{\frac{\max\left(2W_T,V_T\right)}{T}} \quad m_t=T, \label{eq:gammamtnuchoicedspmfirst}
	\end{align}
    we obtain
    \begin{align}
	\mathfrak{R}_{DSPM}\leq \mathcal{O}\left(\sqrt{T\max\left(2W_T,V_T\right)}\left(1+\frac{\sigma^2}{\max\left(2W_T,V_T\right)}\right)\right). \label{eq:rdspmboundfirst}
	\end{align}
	\end{enumerate}
\end{theorem}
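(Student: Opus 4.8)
The plan is to reduce the squared function-value regret $\mathfrak{R}_{DSPM}$ to a sum of instantaneous saddle-point gaps and then control that sum through the per-step contraction of the Frank-Wolfe update together with a nonstationarity (drift) budget. Define the gap $w_t := f_t(x_t,y_t^*) - f_t(x_t^*,y_t)$, which is nonnegative because $(x_t^*,y_t^*)$ is the saddle point of $f_t$. Using Assumption~\ref{as:lip} to write $|f_t(x_t,y_t) - f_t(x_t^*,y_t^*)| \leq L(\norm{x_t-x_t^*}+\norm{y_t-y_t^*})$ and the strong convexity--concavity of Assumption~\ref{as:strongconcon} to obtain $\norm{x_t-x_t^*}^2+\norm{y_t-y_t^*}^2 \leq \tfrac{2}{\mu}w_t$, I would deduce $(f_t(x_t,y_t)-f_t(x_t^*,y_t^*))^2 \leq \tfrac{4L^2}{\mu}w_t$, so that $\mathfrak{R}_{DSPM}\leq \tfrac{4L^2}{\mu}\expec{\sum_{t=1}^T w_t}$. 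It therefore suffices to bound $\expec{\sum_t w_t}$.

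The second step is to establish the one-step Frank-Wolfe recursion for the gap of the \emph{fixed} function $f_t$. Writing $\tilde{w}_{t+1}:= f_t(x_{t+1},y_t^*) - f_t(x_t^*,y_{t+1})$ for the gap of $f_t$ at the iterate produced by the update~\eqref{eq:alg2update}, the same argument that yields the offline one-step inequality used in the proof of Lemma~\ref{lm:induction} (and underlying Theorem~\ref{theo:offl_nonadapt}) gives, under Assumptions~\ref{as:lipgrad} and~\ref{as:crosslipgrad} and the condition $0\leq C_0\leq 1$,
\[
\tilde{w}_{t+1}\leq (1-C_0\gamma)\,w_t + C_1\gamma^2 + \frac{1}{2L_G}\left(\norm{\Delta_t^x}^2 + \norm{\Delta_t^y}^2\right),
\]
where $\Delta_t^{x[y]} = \bar{G}_t^{x[y]} - \nabla_{x[y]}f_t$ is the gradient-estimation error and $C_0,C_1$ are as in Theorem~\ref{theo:offl_nonadapt}. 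The contraction factor $1-C_0\gamma$ is exactly where the interior-saddle condition on $C_0$ enters.

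The third, and crucial, step is the nonstationarity bookkeeping converting the gap of $f_t$ at the new iterate, $\tilde{w}_{t+1}$, into the gap of the next function $f_{t+1}$ at the new iterate, $w_{t+1}$. I would split $w_{t+1}-\tilde{w}_{t+1}$ into a function-variation part, controlled using $\norm{f_t-f_{t+1}}:=\sup_{\calX\times\calY}|f_t-f_{t+1}|$ and summing to at most $2W_T$ by membership in $\mathcal{D}_T$, and an optimal-point-variation part, controlled by $\norm{x_{t+1}^*-x_t^*}$ and $\norm{y_{t+1}^*-y_t^*}$ and tied to $V_T$ through membership in $\mathcal{M}_T$. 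This yields $w_{t+1}\leq (1-C_0\gamma)w_t + C_1\gamma^2 + \tfrac{1}{2L_G}(\norm{\Delta_t^x}^2+\norm{\Delta_t^y}^2) + \xi_t$ with $\sum_t \xi_t = \mathcal{O}(\max(2W_T,V_T))$, and telescoping the geometric recursion with constant $\gamma$ gives $\expec{\sum_t w_t}\leq \tfrac{1}{C_0\gamma}\big(w_1 + TC_1\gamma^2 + \tfrac{1}{2L_G}\sum_t\expec{\norm{\Delta_t^x}^2+\norm{\Delta_t^y}^2} + \sum_t\xi_t\big)$.

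Finally I would plug in the parameters. In the zeroth-order setting, Lemma~\ref{lm:zerograd} with $m_t^{X[Y]}=T(d_{X[Y]}+5)$ and $\nu_{X[Y]}=1/\sqrt{T(d_{X[Y]}+3)^3}$ makes each $\expec{\norm{\Delta_t^{x[y]}}^2}=\mathcal{O}((L^2+\sigma^2+L_G^2)/T)$, so $\sum_t\expec{\norm{\Delta_t^x}^2+\norm{\Delta_t^y}^2}=\mathcal{O}(\sigma^2+1)$; in the first-order setting the same order follows from Assumption~\ref{as:stoch1} with $m_t=T$. Setting $\gamma=\sqrt{\max(2W_T,V_T)/T}$ balances $TC_1\gamma^2=\mathcal{O}(\max(2W_T,V_T))$ and the drift against the amplification $1/(C_0\gamma)=\mathcal{O}(\sqrt{T/\max(2W_T,V_T)})$, yielding $\expec{\sum_t w_t}\leq\mathcal{O}\big(\sqrt{T\max(2W_T,V_T)}\,(1+\sigma^2/\max(2W_T,V_T))\big)$; multiplying by $4L^2/\mu$ gives the stated bound, and the oracle-complexity count $\mathcal{O}((d_X+d_Y)T^2)$ follows from $\sum_t(m_t^X+m_t^Y)$. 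The main obstacle I anticipate is precisely the drift bookkeeping: since telescoping divides $\sum_t\xi_t$ by $C_0\gamma\sim\sqrt{\max(2W_T,V_T)/T}$, the result is sublinear only if the optimal-point drift is accounted for through the \emph{squared} variation $V_T$ rather than through the path length $\sum_t\norm{(x_{t+1}^*,y_{t+1}^*)-(x_t^*,y_t^*)}$ (whose naive Lipschitz bound would contribute an unacceptable $\sqrt{TV_T}$ after amplification); making this part work requires exploiting the equivalence $\tfrac{\mu}{2}(\norm{x_t-x_t^*}^2+\norm{y_t-y_t^*}^2)\le w_t\le\tfrac{L_G}{2}(\norm{x_t-x_t^*}^2+\norm{y_t-y_t^*}^2)$ between the gap and the squared distance to the saddle, together with the cross-Lipschitz Assumption~\ref{as:crosslipgrad}.
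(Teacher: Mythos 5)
Your proposal follows essentially the same route as the paper's proof: the reduction $\mathfrak{R}_{DSPM}\leq \frac{4L^2}{\mu}\expec{\sum_{t=1}^T w_t}$ via Assumptions~\ref{as:lip} and~\ref{as:strongconcon}, the one-step contraction~\eqref{eq:spfwupdate} with the same constants $C_0,C_1$, the drift decomposition~\eqref{eq:dynfwwt1breakdown} into a $\mathcal{D}_T$-controlled part ($2a_t$) and an $\mathcal{M}_T$-controlled part ($Lb_t$), geometric unrolling with constant $\gamma$, and identical parameter choices and oracle counts. The one point where you are more careful than the paper --- insisting that the optimal-point drift enter through the squared variation $V_T$ rather than through the path length $\sum_t b_t$ --- is exactly the step the paper elides, since its final display bounds $\sum_{t=0}^{T-1}\left(2a_t+Lb_t\right)$ by $2W_T+V_T$ directly, which is valid only if $V_T$ is read as a path-length budget on $\lbrace (x_t^*,y_t^*)\rbrace$ rather than the squared-variation quantity of Definition~\ref{def:us3}, so your flagged obstacle (and sketched fix via $\frac{\mu}{2}$--$\frac{L_G}{2}$ equivalence and Assumption~\ref{as:crosslipgrad}) identifies a genuine looseness in the paper rather than a gap in your own argument.
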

\begin{proof}[Proof of Theorem~\ref{thm:fwdynamicfuncreg}]
First note that by Assumption~\ref{as:lip}, we have
	\begin{align*}
	\sum_{t=1}^{T}\left(f_t\left(x_t,y_t\right)-f_t\left(x_t^*,y_t^*\right)\right)^2
	\leq  2L^2 \sum_{t=1}^{T}\left(\|x_t-x_t^*\|^2+\|y_t-y_t^*\|^2\right).
	\end{align*}
Using Assumption~\ref{as:strongconcon} we have,
	\begin{align*}
	2L^2 \sum_{t=1}^{T}\left(\|x_t-x_t^*\|^2+\|y_t-y_t^*\|^2\right)
	\leq & \frac{4L^2}{\mu}\sum_{t=1}^{T}\left(f_{t}\left(x_t,y_t^*\right)-f_{t}\left(x_t^*,y_t^*\right)+f_{t}\left(x_t^*,y_t^*\right)-f_{t}\left(x_t^*,y_t\right)\right)\\
	\leq & \frac{4L^2}{\mu}\sum_{t=1}^{T}\left(f_{t}\left(x_t,y_t^*\right)-f_{t}\left(x_t^*,y_t\right)\right). \numberthis \label{eq:relfwdynamicreg}
	\end{align*}
	By Algorithm~\ref{alg:FWdyn} update \eqref{eq:alg2update}, we have 
	\begin{align}
	f_{t}\left(x_{t+1},y_{t}^*\right)-f_{t}\left(x_{t}^*,y_{t+1}\right)\leq \left(1-C_0\gamma\right)\left(f_{t}\left(x_{t},y_t^*\right)-f_{t}\left(x_t^*,y_{t}\right)\right)+\gamma^2C_1+\frac{\|\Delta_t^x\|^2+\|\Delta_t^y\|^2}{2L_G}. \label{eq:spfwupdate}
	\end{align}
Let $w_{t}=f_{t}\left(x_{t},y_{t}^*\right)-f_{t}\left(x_{t}^*,y_{t}\right)$. Then we have
	\begin{align*}
	w_{t+1}=&f_{t+1}\left(x_{t+1},y_{t+1}^*\right)-f_{t+1}\left(x_{t+1}^*,y_{t+1}\right)\\
	=&f_{t}\left(x_{t+1},y_{t}^*\right)-f_{t}\left(x_{t}^*,y_{t+1}\right)
	+f_{t+1}\left(x_{t+1},y_{t}^*\right)-f_{t}\left(x_{t+1},y_{t}^*\right)
	+f_{t}\left(x_{t}^*,y_{t+1}\right)-f_{t+1}\left(x_{t}^*,y_{t+1}\right)\\
	&~~+f_{t+1}\left(x_{t+1},y_{t+1}^*\right)-f_{t+1}\left(x_{t+1},y_{t}^*\right)
	+f_{t+1}\left(x_{t}^*,y_{t+1}\right)-f_{t+1}\left(x_{t+1}^*,y_{t+1}\right).  \numberthis\label{eq:dynfwwt1breakdown}
	\end{align*}
	We now proceed to handle zeroth-order and first-order setting differently. 
	\begin{enumerate}[label=(\alph*)]
    \item Using \eqref{eq:grad_est_error}, choosing $m_t^{X[Y]}$, and $\nu_{X[Y]}$ as in \eqref{eq:gammamtnuchoicedspmzero} we have
	\begin{align}
	    \expec{\|\Delta_t^{x[y]}\|^2}\leq \frac{2\left(L^2+\sigma^2\right)+\frac{3}{2}L_G^2}{T}.\label{eq:expectedgraderrormtnu}
	\end{align}
	Using \eqref{eq:expectedgraderrormtnu}, and Assumption~\ref{as:lip} we get,
	\begin{align*}
	\expec{w_{t+1}|\calF_t}\leq \left(1-C_0\gamma\right)w_t+\gamma^2C_1+\frac{4\left(L^2+\sigma^2\right)+3L_G^2}{2L_GT} +2a_t+Lb_t,
	\end{align*}
	where $a_t=\norm{f_t-f_{t+1}}:=\sup_{x,y\in\mathcal{X},\calY}\abs{f_t\left( x,y\right) -f_{t+1}\left( x,y\right)}$, and $b_t=\|x_t^*-x_{t+1}^*\|+\|y_t^*-y_{t+1}^*\|$. 
	Hence, using $r_0\leq 2LD$, where $D=\max\left(D_X,D_Y\right)$, we get 
	\begin{align*}
	\expec{w_{t+1}|\calF_t}\leq& ~2\left(1-C_0 \gamma\right)^{t+1}LD+\gamma^2C_1\sum_{j=0}^{t}\left(1-C_0 \gamma\right)^{j}+\frac{4\left(L^2+\sigma^2\right)+3L_G^2}{2C_0\gamma L_GT}\\
	&~+\sum_{j=0}^{t}\left(1-C_0 \gamma\right)^{t-j}\left(2a_j+Lb_j\right).
	\end{align*}
	Summing both sides from $t=0$, to $t=T-1$, we get
	\begin{align*}
	\sum_{t=0}^{T-1}\expec{w_{t+1}|\calF_t}\leq & 2LD\sum_{t=0}^{T-1}\left(1-C_0 \gamma\right)^{t+1}+\gamma^2C_1\sum_{t=0}^{T-1}\sum_{j=0}^{t}\left(1-C_0 \gamma\right)^{j}+\frac{4\left(L^2+\sigma^2\right)+3L_G^2}{2C_0\gamma L_G}\\
	+&\sum_{t=0}^{T-1}\sum_{j=0}^{t}\left(1-C_0 \gamma\right)^{t-j}\left(2a_j+Lb_j\right)\\
	\leq & \frac{2LD\left(1-C_0 \gamma\right)}{C_0 \gamma}+\frac{\gamma^2C_1T}{1-\left(1-C_0 \gamma\right)}+\frac{4\left(L^2+\sigma^2\right)+3L_G^2}{2C_0\gamma L_G}
	+\frac{1}{C_0 \gamma}\sum_{t=0}^{T-1}\left(2a_t+Lb_t\right)\\
	\leq & \frac{2LD}{C_0\gamma}+\frac{\gamma C_1T}{C_0}+\frac{4\left(L^2+\sigma^2\right)+3L_G^2}{2C_0\gamma L_G}+\frac{1}{C_0 \gamma}\left(2W_T+V_T\right).
	\end{align*}
	Choosing $\gamma$ as in \eqref{eq:gammamtnuchoicedspmzero}, and by \eqref{eq:relfwdynamicreg} we get \eqref{eq:rdspmboundzero}.
	\item After \eqref{eq:dynfwwt1breakdown}, choosing $\gamma$, and $m_t$ as in \eqref{eq:gammamtnuchoicedspmfirst} and following the same logic we get \eqref{eq:rdspmboundfirst}.
\end{enumerate}
\end{proof}
\begin{remark} 
Note that in Theorem~\ref{thm:fwdynamicfuncreg}, we require $\lbrace f_t \rbrace_{t=1}^T\in \mathcal{M}_T~\cap~\mathcal{D}_T$, unlike Theorem~\ref{th:dynamicexgrad} or in general \texttt{argmin}-type convex optimization problems, where the functions are generally required to belong to only one uncertainty set. The reason is due to the fact the merit function $w_{t}=f_{t}\left(x_{t},y_{t}^*\right)-f_{t}\left(x_{t}^*,y_{t}\right)$ is used as a suboptimality measure. At time $t$, if one had already known the points $\left(x_{t},y_{t}^*\right)$, and $\left(x_{t}^*,y_{t}\right)$ then the situation would be similar to online convex optimization. In particular, in this case, it would be enough to require $\lbrace f_t \rbrace_{t=1}^T\in \mathcal{M}_T$. But as we do not know $\left(x_{t},y_{t}^*\right)$, and $\left(x_{t}^*,y_{t}\right)$, we also require $\lbrace f_t \rbrace_{t=1}^T\in \mathcal{D}_T$.
\end{remark}

\subsection{Regret Analysis of Gradient Descent Ascent Algorithm}\label{sec:gdadynamic}
Thus far in this paper, we considered in extragradient and Frank-Wolfe algorithms for solving nonstationary saddle-point optimization problems of the form~\eqref{eq:nssaddle}. As discussed in Section~\ref{sec:mainalgos}, arguably the most natural algorithm for solving saddle-point optimization problem is the gradient descent ascent algorithm. Hence, it is worth exploring how well online or bandit versions of gradient descent algorithms performs for solving problems of the form~\eqref{eq:nssaddle}. In this section, we address this question concentrating on bounding dynamic regret. The algorithms is formally presented in Algorithm~\ref{alg:gdadynamica}. It turns out that it is not possible to obtain any meaningful bounds for the previous notions of dynamic regret (as in Definition~\ref{def:dsppath} or~\ref{def:dsp2}). We now define a weaker notions of dynamic regret which is suitable for analyzing Algorithm~\ref{alg:gdadynamica}. It is intriguing to explore other stronger notions of regret for which one could quantify the performance of Algorithm~\ref{alg:gdadynamica}, or prove impossibility results. 

\begin{definition} \label{def:dsp}
Let $\lbrace f_t \rbrace_{t=1}^T$ be a sequence of functions satisfying Assumption~\ref{as:strongconcon}. For this class of functions, with $(x_{t}^*,y_{t}^*)$ as defined in~\eqref{eq:nssaddle}, the Dynamic Saddle-Point (DSP) Regret is defined as 
\begin{align}
\mathfrak{R}_{DSP}=\expec{\abs{\sum_{t=1}^{T}\left(f_t\left(x_t,y_t\right)-f_t\left(x_t^*,y_t^*\right)\right)}}.
\end{align}
\end{definition}
\begin{algorithm}[t]\label{alg:gdadynamica}
	\caption{Bandit Gradient Descent Ascent Algorithm}
	\textbf{Input:} $x_1\in \mathbb{R}^{d_X}$, $y_1\in \mathbb{R}^{d_Y}$, $\eta_t>0$, $\nu_{X[Y]}>0$ 
	\begin{algorithmic}
	\State \textbf{for} $t=1,2,\cdots,T$ \textbf{do}	
	\State \textbf{Sample} $u_t^{x[y]} \sim N \left( 0,\bf{I_d}\right) $
	\State \textbf{Set} $\left[\bar{G}_t^x\left(x_t,y_t\right);\bar{G}_t^y\left(x_t,y_t\right)\right]=\texttt{gradest}\left(x_t,y_t,\nu_{X[Y]}\right)$ \Comment{Algorithm~\ref{alg:gradest}}
	\State \textbf{Update}
	\State $x_{t+1}=\mathcal{P}_\mathcal{X}\left( x_t-\eta_t \bar{G}^{x}_t\left(x_t,y_t \right)\right) $
	\State $y_{t+1}=\mathcal{P}_\mathcal{Y}\left( y_t+\eta_t \bar{G}^{y}_t\left(x_t,y_t \right)\right) $
	\State where $\mathcal{P}_\mathcal{X}\left( z\right)$ is the projection operator, i.e., $\mathcal{P}_\mathcal{X}\left( z\right)\vcentcolon=\argmin_{x \in \mathcal{X}}\norm{z-x}$
	\State \textbf{end for}
	\end{algorithmic}
\end{algorithm}
For the above mentioned notion of regret, we state the following result.
\begin{theorem}\label{th:absoutsum}
	Let $\left(x_t,y_t\right)$ be generated by Algorithm~\ref{alg:gdadynamica} for any sequence of functions $\lbrace f_t \rbrace_{t=1}^T \in \mathcal{M}_T$ for which Assumptions~\ref{as:lip} and \ref{as:lipgrad} hold. Then, we have:
	\begin{enumerate}[label=(\alph*)]
	    \item Under the availability of the stochastic zeroth-order oracle, choosing
	\begin{align}
	\eta_t=\eta=V_T^\frac{1}{4} \quad \nu_{X[Y]}=\frac{1}{\left(d_{X[Y]}+6\right)^\frac{3}{2}\sqrt{T}} \quad m_t^{X[Y]}=\left(d_{X[Y]}+6\right)T, \label{eq:dspetanuzero}
	\end{align}
	we obtain,
	\begin{align}
	\mathfrak{R}_{DSP}\leq\mathcal{O}\left(\left(1+\sigma^2\right)V_T^\frac{1}{4}+\sigma \sqrt{T}\right). \label{eq:dspzero}
	\end{align}
	Hence, the total number of calls to the stochastic zeroth-order oracle is $\mathcal{O}\left(\left(d_X+d_Y\right)T^2\right)$.
	\item Under the availability of the stochastic first-order oracle, choosing
	\begin{align}
	\eta_t=V_T^\frac{1}{4} \quad m_t=T, \label{eq:dspetanufirst}
	\end{align}
	we obtain,
	\begin{align}
	\mathfrak{R}_{DSP}\leq \mathcal{O}\left(V_T^\frac{1}{4}+\sigma \sqrt{T}\right). \label{eq:dspfirst}
	\end{align}	
	\end{enumerate}
\end{theorem}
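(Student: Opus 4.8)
The plan is to control the \emph{signed} sum $\sum_{t=1}^{T} a_t$, where $a_t := f_t(x_t,y_t)-f_t(x_t^*,y_t^*)$, from both above and below, and to pass to $\expec{\abs{\sum_t a_t}}$ only at the very end. This two-sided route is the essential point: bounding $\sum_t \abs{a_t}$ term-by-term (which is the quantity controlled for $\mathfrak{R}_{DSPF}$) is far too lossy here and would reintroduce the uncontrolled distances $\norm{x_t-x_t^*}$. Writing $g_t^{x[y]} := \nabla_{x[y]} f_t(x_t,y_t)$ and $D:=\max(D_\calX,D_\calY)$, the first step is a per-step sandwich obtained from convexity in $x$, concavity in $y$ (Assumption~\ref{as:strongconcon}), and the saddle optimality of $(x_t^*,y_t^*)$. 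Splitting $a_t = [f_t(x_t,y_t)-f_t(x_t^*,y_t)] + [f_t(x_t^*,y_t)-f_t(x_t^*,y_t^*)]$ and dropping the nonpositive second bracket gives $a_t \le \langle g_t^x,\, x_t-x_t^*\rangle$, while the symmetric split gives $a_t \ge \langle g_t^y,\, y_t-y_t^*\rangle$.

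Next I would feed the Algorithm~\ref{alg:gdadynamica} update into these inner products. Nonexpansiveness of the projections $\mathcal{P}_\calX,\mathcal{P}_\calY$ yields, for the mini-batch estimators $\bar{G}_t^{x[y]}$, the inequality $\langle \bar{G}_t^x, x_t-x_t^*\rangle \le \tfrac{1}{2\eta}(\norm{x_t-x_t^*}^2-\norm{x_{t+1}-x_t^*}^2) + \tfrac{\eta}{2}\norm{\bar{G}_t^x}^2$ and its ascent analogue for $y$. Summing the upper bound, the leading quadratic terms telescope; the moving comparator is handled by the standard decomposition $\norm{x_{t+1}-x_{t+1}^*}^2 - \norm{x_{t+1}-x_t^*}^2 \le 2D_\calX\norm{x_{t+1}^*-x_t^*}$, so the drift is controlled by $\sum_t(\norm{x_{t+1}^*-x_t^*}+\norm{y_{t+1}^*-y_t^*})$, which Cauchy--Schwarz and the definition of $\mathcal{M}_T$ in \eqref{eq:us3} bound through $V_T$. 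The estimator bias and variance are then substituted via Lemma~\ref{lm:zerograd}: with $\nu_{X[Y]}$ as in \eqref{eq:dspetanuzero} the bias contribution is negligible, and $\expec{\norm{\bar{G}_t^{x[y]}}^2}$ as well as $\expec{\norm{\Delta_t^{x[y]}}^2}$ are each $\order((L^2+\sigma^2)/m_t^{X[Y]})$.

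The stochastic error splits into a conditionally zero-mean martingale part $M_T=\sum_t \langle \bar{G}_t-\expec{\bar{G}_t\mid\calF_t},\, x_t-x_t^*\rangle$ and a deterministic bias part. For the martingale I would use Jensen, $\expec{\abs{M_T}} \le (\expec{M_T^2})^{1/2} = (\sum_t \expec{\langle\cdot,\cdot\rangle^2})^{1/2}$, bounding each increment by $D_\calX^2\,\expec{\norm{\Delta_t^x}^2}$; this is where the $\sigma\sqrt{T}$ term in \eqref{eq:dspzero}--\eqref{eq:dspfirst} originates. Collecting the telescoped distance term $\order(D^2/\eta)$, the $V_T$ drift term, the gradient-norm term $\order(\eta\sum_t\expec{\norm{\bar{G}_t}^2})$, the bias, and the martingale term, then repeating the identical argument for the lower bound driven by $\langle g_t^y, y_t-y_t^*\rangle$, gives two-sided control of $\sum_t a_t$. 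Taking $\expec{\abs{\cdot}}$ and inserting $\eta=V_T^{1/4}$, $m_t^{X[Y]}=(d_{X[Y]}+6)T$ balances the $V_T$-dependent terms and yields \eqref{eq:dspzero} (zeroth-order) and \eqref{eq:dspfirst} (first-order), with the oracle count following from summing $m_t^{X[Y]}$ over $t$.

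The main obstacle I anticipate is exactly the exploitation of the outer absolute value. Each one-sided inner-product bound individually carries a $\tfrac{\eta}{2}\sum_t\norm{\bar{G}_t}^2$ term, which by itself is only $\order(\eta T)$ and hence linear; it is the cancellation between the $x$-descent and $y$-ascent contributions -- a reflection of the skew/indefinite saddle structure, most transparent in the bilinear case where both sandwich inequalities are tight -- that must be argued carefully to keep $\sum_t a_t$ sublinear. Getting this cancellation quantitatively right, while simultaneously keeping the moving-comparator drift at the correct power of $V_T$ and tracking the zeroth-order bias and variance through Lemma~\ref{lm:zerograd}, is the delicate part; the martingale estimate and the final parameter optimization are comparatively routine.
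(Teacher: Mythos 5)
Your skeleton coincides with the paper's proof of Theorem~\ref{th:absoutsum}: the same two-sided sandwich (upper bound on your $a_t$ via convexity in $x$ after dropping $\sum_t\left(f_t(x_t^*,y_t)-f_t(x_t^*,y_t^*)\right)\leq 0$, lower bound via concavity in $y$), the same projection/nonexpansiveness inequality with telescoping and drift terms of order $D_\calX\|x_t^*-x_{t+1}^*\|$, and the same use of Lemma~\ref{lm:zerograd}. The genuine gap is exactly the step you flag as delicate, and it is not resolvable the way you propose. There is no cancellation between the $x$-descent and $y$-ascent contributions to exploit: under the outer absolute value, the upper bound on $\sum_t a_t$ involves only $x$-block inner products and the lower bound only $y$-block inner products, so each one-sided bound must be sublinear \emph{on its own}, and a skew/bilinear cancellation argument never enters either side. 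The paper kills the quadratic term $\frac{\eta}{2}\sum_t\expec{\|\bar{G}_t\|^2}$ not by cancellation but through the $\Theta(T)$ mini-batch: with $m_t^{X[Y]}=\left(d_{X[Y]}+6\right)T$, the bound \eqref{eq:gradnormbound} scales as $\order\left((L^2+\sigma^2)/m_t\right)$ per step, whence $\sum_{t=1}^T\expec{\|\bar{G}_t^x\|^2}\leq 3L^2+2\sigma^2$ is constant and the whole term is $\order\left(\eta\left(L^2+\sigma^2\right)\right)=\order\left(\left(1+\sigma^2\right)V_T^{1/4}\right)$ --- precisely the first term of \eqref{eq:dspzero}. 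Your accounting leaves this term at $\order(\eta T)$ with $\eta=V_T^{1/4}$ fixed (it does not decay with $T$), i.e.\ linear in $T$, and the deferred cancellation cannot repair it; as proposed, the proof does not close.

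A second divergence concerns the drift. You invoke Cauchy--Schwarz together with the definition of $\mathcal{M}_T$ in \eqref{eq:us3}, which yields $\sum_{t}\|x_{t+1}^*-x_t^*\|\leq\sqrt{TV_T}$; divided by $\eta=V_T^{1/4}$ this gives a term of order $\sqrt{T}\,V_T^{1/4}$, which exceeds \eqref{eq:dspzero}. The paper's proof instead passes from \eqref{eq:gbartxta}--\eqref{eq:gbartxtb} to its final display by replacing the path-length sums with $\sqrt{2V_T}$ outright (no $\sqrt{T}$ factor), which is what produces $V_T^{1/4}$ after dividing by $\eta$; whatever one makes of that step, the route you state gives the strictly weaker $\sqrt{T}\,V_T^{1/4}$ rate, not the theorem's. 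Finally, a harmless misattribution: with your martingale-plus-Jensen treatment, $\expec{|M_T|}\leq\left(\sum_t D_\calX^2\expec{\|\Delta_t^x\|^2}\right)^{1/2}=\order(\sigma)$ is constant under the chosen batch sizes, so the $\sigma\sqrt{T}$ term cannot originate there; in the paper it comes from the cruder per-step bound $\sum_t\expec{\|\Delta_t^{x[y]}\|}\leq 2\left(L+\sigma+L_G\right)\sqrt{T}$ in \eqref{eq:sqrtgraderror}, which also absorbs the smoothing bias. That last point would only improve your constants, but the two items above are genuine gaps.
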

\begin{proof}[Proof of Theorem~\ref{th:absoutsum}]
 Based on the non-expansiveness of the Euclidean projections and our boundedness assumption on $\mathcal{X}$, we have
	\begin{align*}
	&\norm{x_{t+1}-x_{t+1}^*}_2^2
	=\norm{x_{t+1}-x_{t}^*}_2^2+3D_\calX\|x_t^*-x_{t+1}^*\|\\
	=&  \norm{\mathcal{P}_\mathcal{X}\left( x_t-\eta_t \bar{G}^{x}_t\left(x_t,y_t \right)\right)-x_t^*   }_2^2+3D_\calX\|x_t^*-x_{t+1}^*\|\\
	\leq &  \norm{ x_t-\eta_t \bar{G}^{x}_t\left(x_t,y_t  \right)-x_t^*   }_2^2+3D_\calX\|x_t^*-x_{t+1}^*\|\\
	=& \norm{x_{t}-x_t^*}_2^2+\eta_t^2 \|\bar{G}^{x}_t\left(x_t,y_t\right)\|_2^2-2\eta_t \bar{G}^{x}_t\left(x_t,y_t\right)^\top\left( x_t-x_t^*\right)+3D_\calX\|x_t^*-x_{t+1}^*\|.
	\end{align*}
	Rearranging terms we then have
	\begin{align*}
	\bar{G}^{x}_t\left(x_t,y_t \right)^\top\left( x_t-x_t^*\right)\leq  \frac{1}{2\eta_t}\left(\norm{x_{t}-x_t^*}_2^2-\norm{x_{t+1}-x_{t+1}^*}_2^2 +\eta_t^2\norm{\bar{G}^{x}_t\left(x_t,y_t  \right)}_2^2+3D_\calX\|x_t^*-x_{t+1}^*\| \right). \numberthis \label{eq:notexpeckweagradx}
	\end{align*}
	Using the fact that $\sum_{t=1}^{T}\left(f_t\left(x_t^*,y_t\right)-f_t\left(x_t^*,y_t^*\right)\right)\leq 0$, and choosing $\eta_t$ as in \eqref{eq:dspetanuzero} we get
	\begin{align*}
	&\sum_{t=1}^{T}\left(f_t\left(x_t,y_t\right)-f_t\left(x_t^*,y_t^*\right)\right)\\
	=&\sum_{t=1}^{T}\left(f_t\left(x_t,y_t\right)-f_t\left(x_t^*,y_t\right)+f_t\left(x_t^*,y_t\right)-f_t\left(x_t^*,y_t^*\right)\right)\\
	\leq&\sum_{t=1}^{T}\left(\bar{G}^{x}_t\left(x_t,y_t  \right)^\top\left( x_t-x_t^*\right)+\|\bar{G}^{x}_t\left(x_t,y_t  \right)-\nabla_x f_t\left(x_t,y_t\right)\|\|x_t-x_t^*\|\right).
	\end{align*}
	Similarly, we have
	\begin{align*}
	    \sum_{t=1}^{T}\left(f_t\left(x_t^*,y_t^*\right)-f_t\left(x_t,y_t\right)\right)\leq \sum_{t=1}^{T}\left(\bar{G}^{y}_t\left(x_t,y_t  \right)^\top\left( y_t-y_t^*\right)+\|\bar{G}^{y}_t\left(x_t,y_t  \right)-\nabla_y f_t\left(x_t,y_t\right)\|\|y_t-y_t^*\|\right).
	\end{align*}
	Hence, we obtain
	\begin{align*}
	    \abs{\sum_{t=1}^{T}\left(f_t\left(x_t^*,y_t^*\right)-f_t\left(x_t,y_t\right)\right)}&~\leq  \sum_{t=1}^{T}\left(\bar{G}^{x}_t\left(x_t,y_t  \right)^\top\left( x_t-x_t^*\right)+\bar{G}^{y}_t\left(x_t,y_t  \right)^\top\left( y_t-y_t^*\right)\right)\\
	    &~+\sum_{t=1}^{T}\left(\|\bar{G}^{x}_t\left(x_t,y_t  \right)-\nabla_x f_t\left(x_t,y_t\right)\|D_\calX\right)\\
	    &~+\sum_{t=1}^{T} \left(\|\bar{G}^{y}_t\left(x_t,y_t  \right)-\nabla_y f_t\left(x_t,y_t\right)\|D_\calY\right).
	\end{align*}
	Hence, we obtain
	\begin{align*}
	    \expec{\sum_{t=1}^{T}\bar{G}^{x}_t\left(x_t,y_t  \right)^\top\left( x_t-x_t^*\right)}\leq & \frac{1}{2\eta}\left(\norm{x_{1}-x_1^*}_2^2-\norm{x_{T+1}-x_{T+1}^*}_2^2 +\eta^2\sum_{t=1}^{T}\expec{\norm{\bar{G}^{x}_t\left(x_t,y_t  \right)}_2^2}\right.\\
	    &\left.+3D_\calX\sum_{t=1}^{T}\|x_t^*-x_{t+1}^*\| \right).
	\end{align*}
	We now handle the zeroth-order and first-order setting separately. 
	\begin{enumerate}[label=(\alph*)]
    \item Choosing $\nu_{X[Y]}$, and $m_t^{X[Y]}$ as in \eqref{eq:dspetanuzero}, and using Lemma~\ref{lm:zerograd} we get
	\begin{align*}
	  \sum_{t=1}^{T}\expec{\norm{\bar{G}^{x}_t\left(x_t,y_t  \right)}_2^2}&\leq 3L^2+2\sigma^2,\\ \sum_{t=1}^{T}\expec{\|\bar{G}^{x[y]}_t\left(x_t,y_t  \right)-\nabla_{x[y]} f_t\left(x_t,y_t\right)\|}&\leq 2\left(L+\sigma+L_G\right)\sqrt{T}. \numberthis \label{eq:sqrtgraderror}
	\end{align*}
	Hence, we obtain
	\begin{align}
	    \expec{\sum_{t=1}^{T}\bar{G}^{x}_t\left(x_t,y_t  \right)^\top\left( x_t-x_t^*\right)}\leq & \frac{1}{2\eta}\left(D_\calX^2 +\eta^2\left(3L^2+2\sigma^2\right)+3D_\calX\sum_{t=1}^{T}\|x_t^*-x_{t+1}^*\| \right). \label{eq:gbartxta} 
	\end{align}
	Similarly, we also have
	\begin{align}
	    \expec{\sum_{t=1}^{T}\bar{G}^{y}_t\left(x_t,y_t  \right)^\top\left( y_t-y_t^*\right)}\leq & \frac{1}{2\eta}\left(D_\calY^2 +\eta^2\left(3L^2+2\sigma^2\right)+3D_\calY\sum_{t=1}^{T}\|y_t^*-y_{t+1}^*\| \right). \label{eq:gbartxtb}
	\end{align}
	Combining, \eqref{eq:sqrtgraderror}, \eqref{eq:gbartxta}, and \eqref{eq:gbartxtb}, we get
	\begin{align*}
	\expec{\abs{\sum_{t=1}^{T}\left(f_t\left(x_t,y_t\right)-f_t\left(x_t^*,y_t^*\right)\right)}}&\leq  \frac{1}{2\eta}\left(D_\calX^2+D_\calY^2+\eta^2\left(6L^2+4\sigma^2\right)+3\left(D_\calX+D_\calY\right)\sqrt{2V_T}\right)\\&~~~+2\left(D_\calX+D_\calY\right)\left(L+L_G+\sigma\right)\sqrt{T}.
	\end{align*}
	Choosing $\eta$ as in \eqref{eq:dspetanuzero} we get,
	\begin{align*}
	\mathfrak{R}_{DSP}\leq  \mathcal{O}\left(\left(1+\sigma^2\right)V_T^\frac{1}{4}+\sigma \sqrt{T}\right)
	\end{align*}
	\item The proof for part (b) is similar to part (a). Choosing $\eta_t$, $\nu$, and $m_t$ as in \eqref{eq:dspetanufirst} we get \eqref{eq:dspfirst}.
\end{enumerate}

\end{proof}
\begin{remark}
Note that $\mathfrak{R}_{DSP}$ is a weaker notion of regret than $\mathfrak{R}_{DSPF}$ as we trivially have $$\mathfrak{R}_{DSP}=\expec{\abs{\sum_{t=1}^{T}\left(f_t\left(x_t,y_t\right)-f_t\left(x_t^*,y_t^*\right)\right)}}\leq \expec{\sum_{t=1}^{T}\abs{f_t\left(x_t,y_t\right)-f_t\left(x_t^*,y_t^*\right)}}=\mathfrak{R}_{DSPF}.$$
Hence, the obtained sub-linear regret bounds on  $\mathfrak{R}_{DSP}$ in Theorem~\ref{th:absoutsum} have no consequence for $\mathfrak{R}_{DSPF}$. 
\end{remark}

\section{Discussion}
In this work, we proposed and analyzed algorithms for sequential decision making problems that could be naturally formulated as nonstationary strongly-convex and strongly-concave saddle point optimization problems. We considered both static and dynamic notions of regret. We analyzed online and bandit versions of iterative algorithms like extragradient method and Frank-Wolfe method with respect to the proposed notions of regret, establishing sublinear regret bounds.

For future work, it is interesting to establish parameter-free versions of our algorithms. Furthermore it is interesting to explore nonconvex and nonconcave nonstationary saddle point optimization problems in the online and bandit settings, by extending appropriately, the definitions of regret proposed in~\cite{roy2019multi} for the \texttt{argmin}-type online nonconvex problems recently. Furthermore, recently the problem of convex body chasing, considered in~\cite{friedman1993convex} initially, has regained significant attention; see for example~\cite{bubeck2019competitively,sellke2019chasing,argue2019chasing}. It is intriguing to precisely formulate saddle-point versions of convex bodies chasing problem and explore algorithms for it.

\section{Proof of Theorem~\ref{theo:offl_nonadapt}} \label{sec:proofofzofw}
In this section, we prove Theorem~\ref{theo:offl_nonadapt}. In order to do so, we require a few sub-results which we state and prove below.
\begin{lemma}
\label{lm: zeroOrder_helper1}
Under assumption~\ref{as:lipgrad}, the following inequalities hold:
\begin{align*}
    &\gamma_k\inner{\derivx{x_{k-1},y_{k-1}},s_{k}^x-x_{k-1}} \leq -\gamma_k \wh{g}_{k-1}^x + \frac{L_G\gamma^2}{2} D^2_\calX + \frac{1}{2L_G}\norm{\Delta_k^x}^2, \\
    &\gamma_k\inner{-\derivy{x_{k-1},y_{k-1}},s_{k}^y-y_{k-1}} \leq -\gamma_k \wh{g}_{k-1}^y + \frac{L_G\gamma^2}{2} D^2_\calY + \frac{1}{2L_G}\norm{\Delta_k^y}^2,
\end{align*}
where $\norm{\Delta_k^x}:= \bar{G}_k^{x}\left(x_k,y_k\right)-\nabla_{x}f\left(x_k,y_k\right)$ and $\norm{\Delta_k^y}:=\bar{G}_k^{y}\left(x_k,y_k\right)-\nabla_{y}f\left(x_k,y_k\right)$. 
\end{lemma}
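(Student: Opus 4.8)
The plan is to prove the $x$-inequality; the $y$-inequality follows by the identical argument with $\calY$, $D_\calY$, and $-\derivy{x_{k-1},y_{k-1}}$ in place of their $x$-counterparts. The key idea is to pass from the \emph{estimated} gradient $\bar{G}_k^x$, which defines the Frank--Wolfe vertex $s_k^x$ actually computed by the linear-minimization step of Algorithm~\ref{algo:offline_SPFW}, to the \emph{true} gradient $\derivx{x_{k-1},y_{k-1}}$, which defines the true gap $\wh{g}_{k-1}^x$ of Definition~\ref{def:fwgaps}, absorbing the discrepancy into the estimation error $\Delta_k^x$.

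First I would add and subtract the estimated gradient, writing $\inner{\derivx{x_{k-1},y_{k-1}}, s_k^x - x_{k-1}} = \inner{\bar{G}_k^x, s_k^x - x_{k-1}} - \inner{\Delta_k^x, s_k^x - x_{k-1}}$. Since $s_k^x$ minimizes $\inner{u,\bar{G}_k^x}$ over $\calX$, I may replace it by the true vertex $\wh{s}_k^x := \argmin_{u}\inner{u,\derivx{x_{k-1},y_{k-1}}}$ and only increase the first term, i.e. $\inner{\bar{G}_k^x, s_k^x - x_{k-1}} \le \inner{\bar{G}_k^x, \wh{s}_k^x - x_{k-1}}$. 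Then I would re-expand the right-hand side back in terms of the true gradient: $\inner{\bar{G}_k^x, \wh{s}_k^x - x_{k-1}} = \inner{\derivx{x_{k-1},y_{k-1}}, \wh{s}_k^x - x_{k-1}} + \inner{\Delta_k^x, \wh{s}_k^x - x_{k-1}} = -\wh{g}_{k-1}^x + \inner{\Delta_k^x, \wh{s}_k^x - x_{k-1}}$, using Definition~\ref{def:fwgaps} to identify the gap term.

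Combining these two displays, the two error contributions collapse neatly into a single cross term, giving $\inner{\derivx{x_{k-1},y_{k-1}}, s_k^x - x_{k-1}} \le -\wh{g}_{k-1}^x + \inner{\Delta_k^x, \wh{s}_k^x - s_k^x}$. Multiplying by $\gamma_k > 0$ and applying Young's inequality to the cross term with weight $L_G$, namely $\gamma_k\inner{\Delta_k^x, \wh{s}_k^x - s_k^x} \le \tfrac{1}{2L_G}\norm{\Delta_k^x}^2 + \tfrac{L_G\gamma_k^2}{2}\norm{\wh{s}_k^x - s_k^x}^2$, and then bounding $\norm{\wh{s}_k^x - s_k^x} \le D_\calX$ (both vertices lie in $\calX$, whose diameter is $D_\calX$), yields exactly the claimed inequality.

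The argument is mostly careful bookkeeping. The one delicate point is the direction of the inequality when swapping $s_k^x$ for $\wh{s}_k^x$: it is valid precisely because $s_k^x$ minimizes the \emph{estimated} linear objective while $\wh{s}_k^x$ minimizes the \emph{true} one, so I must track the sign conventions in Definition~\ref{def:fwgaps} throughout. I expect the only other (minor) obstacle to be pinning down the Young's-inequality weight so that the coefficients come out as $1/(2L_G)$ and $L_G\gamma_k^2/2$; this forces the weight to equal $L_G$, which is how Assumption~\ref{as:lipgrad} enters, even though no smoothness \emph{descent} is needed here beyond supplying the constant $L_G$.
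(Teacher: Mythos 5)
Your proof is correct and follows essentially the same route as the paper's: both exploit the optimality of $s_k^x$ with respect to the estimated gradient $\bar{G}_k^x$ to pass between $s_k^x$ and the true vertex $\wh{s}_k^x$, reduce everything to the single cross term $\gamma_k\inner{\Delta_k^x,\wh{s}_k^x-s_k^x}$, and finish with Young's inequality with weight $L_G$ together with the diameter bound $\norm{\wh{s}_k^x-s_k^x}\leq D_\calX$. The only cosmetic difference is the order of the decomposition (you add and subtract $\bar{G}_k^x$ first, while the paper splits $s_k^x-x_{k-1}$ through $\wh{s}_k^x$ first), and your bookkeeping actually records the sign of the cross term correctly, whereas the paper writes it with the opposite sign---immaterial once Young's inequality is applied.
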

\begin{proof}[Proof of Lemma~\ref{lm: zeroOrder_helper1}]
The proof follows by the fact that
    \begin{align*}
    &\gamma_k\inner{\derivx{x_{k-1},y_{k-1}},s_{k}^x-x_{k-1}}\\
    =~&\gamma_k\inner{\derivx{x_{k-1},y_{k-1}},\wh{s}_{k}^x-x_{k-1}} + \gamma_k\inner{\derivx{x_{k-1},y_{k-1}},s_k^x - \wh{s}_{k}^x}\\
    \leq~&  -\gamma_k\wh{g}_{k-1}^x + \gamma_k\inner{\Delta_k^x,s_k^x - \wh{s}_{k}^x} \\
    \leq~&  - \gamma_k\wh{g}_{k-1}^x + \frac{L_G\gamma_k^2}{2} D^2_\calX + \frac{1}{2L_G}\norm{\Delta_k^x}^2,
    \end{align*}
where, the first inequality follows from the observation that $\inner{\Bar{G}^{x}_k,s_k^x - u} \leq 0$ due to the optimality condition of $s_k^x$. The second inequality of Lemma~\ref{lm: zeroOrder_helper1} follows similarly.
\end{proof}
\begin{lemma} 
\label{lm:est_fwgap_error}
The gap between the true optimality error and estimated optimality error are bounded as
\begin{align*}
 \abs{\wh{g}_k^x- g_k^x} \leq D_\calX\norm{\Delta_k^x}, \quad
    \abs{\wh{g}_k^y- g_k^y} \leq D_\calY\norm{\Delta_k^y}, \quad
    \abs{\wh{g}_k- g_k} \leq (D_\calX\norm{\Delta_k^x}+D_\calY\norm{\Delta_k^y}).
\end{align*}
\end{lemma}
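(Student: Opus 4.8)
The plan is to recognize that each Frank--Wolfe gap is a support-function evaluation, so that both the true gap $\wh{g}_k^x$ and its zeroth-order estimate $g_k^x$ can be written as a maximum of a linear functional over the constraint set; the difference between them then reduces to a difference of two maxima taken over the \emph{same} domain, which is controlled by the maximal pointwise discrepancy between the two functionals. This is the conceptual core of the bound.

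First I would rewrite the gaps in support-function form. Using the optimality of $\wh{s}_{k+1}^x = \argmin_{u\in\calX}\inner{u,\derivx{x_k,y_k}}$ from Definition~\ref{def:fwgaps}, we have $\wh{g}_k^x = -\inner{\derivx{x_k,y_k},\wh{s}_{k+1}^x - x_k} = \max_{u\in\calX}\inner{\derivx{x_k,y_k}, x_k - u}$. Since the estimated gap $g_k^x$ is obtained by replacing the true partial gradient with the zeroth-order estimator $\bar{G}_k^x\left(x_k,y_k\right)$ (and the estimated atom $s_k^x$), correspondingly $g_k^x = \max_{u\in\calX}\inner{\bar{G}_k^x\left(x_k,y_k\right), x_k - u}$. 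The analogous identities hold for the $y$-component, where the sign convention in Definition~\ref{def:fwgaps} (the gap is built from $-\derivy{}$) yields $\wh{g}_k^y = \max_{u\in\calY}\inner{\derivy{x_k,y_k}, u - y_k}$ and likewise for $g_k^y$.

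Then I would invoke the elementary inequality $\abs{\max_u h_1(u) - \max_u h_2(u)} \le \max_u \abs{h_1(u) - h_2(u)}$, valid whenever both maxima are attained over a common domain. Applying it with $h_1(u) = \inner{\derivx{x_k,y_k}, x_k - u}$ and $h_2(u) = \inner{\bar{G}_k^x\left(x_k,y_k\right), x_k - u}$ gives
\[
\abs{\wh{g}_k^x - g_k^x} \le \max_{u\in\calX}\abs{\inner{\Delta_k^x, x_k - u}},
\]
where $\Delta_k^x = \bar{G}_k^x\left(x_k,y_k\right) - \derivx{x_k,y_k}$ as in the statement. Cauchy--Schwarz followed by the diameter bound $\norm{x_k - u}\le D_\calX$ finishes the $x$-estimate, and the identical argument over $\calY$ (using $\norm{y_k-u}\le D_\calY$) produces the $y$-estimate. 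The third inequality then follows from the first two by the triangle inequality, since $\wh{g}_k - g_k = (\wh{g}_k^x - g_k^x) + (\wh{g}_k^y - g_k^y)$.

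The argument is essentially self-contained and short; there is no substantive obstacle. The only point requiring care is the bookkeeping of the sign convention in the $y$-direction, where the maximization and the minus sign must flip consistently so that the perturbation still collapses cleanly to an inner product with $\Delta_k^y$ before applying Cauchy--Schwarz. I would double-check that step to ensure the diameter $D_\calY$ (rather than an extraneous constant) is what multiplies $\norm{\Delta_k^y}$.
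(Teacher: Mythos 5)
Your proof is correct and is essentially the paper's argument in slightly more abstract clothing: the paper proves the two one-sided bounds $\wh{g}_k^x - g_k^x \leq \norm{\Delta_k^x}D_\calX$ and $g_k^x - \wh{g}_k^x \leq \norm{\Delta_k^x}D_\calX$ by invoking the optimality of $s_{k+1}^x$ and $\wh{s}_{k+1}^x$ respectively, which is exactly the content of your inequality $\abs{\max_u h_1(u) - \max_u h_2(u)} \leq \max_u \abs{h_1(u)-h_2(u)}$ applied to the two support functions, followed by the same Cauchy--Schwarz and diameter bounds and the same triangle-inequality step for $\abs{\wh{g}_k - g_k}$. Your handling of the sign convention in the $y$-direction matches the paper's as well, so no changes are needed.
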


\begin{proof}[Proof of Lemma~\ref{lm:est_fwgap_error}]
First note that, we have
\begin{align*}
    \wh{g}_k^x- g_k^x 
    &= -\inner{\derivx{x_{k},y_k},\wh{s}_{k+1}^x-x_k} + \inner{\Bar{G}_{k+1}^{x}, s_{k+1}^x - x_k}\\
    &= -\inner{\derivx{x_{k},y_k},\wh{s}_{k+1}^x-x_k} + \inner{\Bar{G}_{k+1}^{x}, \wh{s}_{k+1}^x - x_k} + \inner{\Bar{G}_{k+1}^{x}, s_{k+1}^x -\wh{s}_{k+1}^x}\\
    &\leq -\inner{\derivx{x_{k},y_k},\wh{s}_{k+1}^x-x_k} + \inner{\Bar{G}_{k+1}^{x}, \wh{s}_{k+1}^x - x_k}\quad \text{ (By the optimality of $s_{k+1}^x$ )}\\
    &= \inner{ \Delta_k^x,\wh{s}_{k+1}^x-x_k}
    \leq \norm{\Delta_k^x} D_\calX.
\end{align*}
Similarly, we have
\begin{align*}
    g_k^x - \wh{g}_k^x
    &= \inner{\derivx{x_{k},y_k},\wh{s}_{k+1}^x-x_k} - \inner{\Bar{G}_{k+1}^{x}, s_{k+1}^x - x_k}\\
    &= \inner{\derivx{x_{k},y_k},s_{k+1}^x-x_k} + \inner{\derivx{x_{k},y_k}, \wh{s}_{k+1}^x - s_{k+1}} - \inner{\Bar{G}_{k+1}^{x}, s_{k+1}^x - x_k}\\
    &\leq \inner{\derivx{x_{k},y_k},s_{k+1}^x-x_k}- \inner{\Bar{G}_{k+1}^{x}, s_{k+1}^x - x_k}\quad \text{ (By the optimality of $\wh{s}_{k+1}^x$ )}\\
    &= \inner{ -\Delta_k^x,s_{k+1}^x-x_k}
    \leq \norm{\Delta_k^x}D_\calX.
\end{align*}
Hence the upper bound for $\abs{\wh{g}_k^x- g_k^x}$ is proved. By using a similar approach, the upper bound claim for $|\wh{g}_k^y- g_k^y|$ could be proved. Hence by the Definition~\ref{def:fwgaps} and triangle inequality, the upper bound claim for $\abs{\wh{g}_k- g_k}$ follows.
\vgap
\end{proof}
\begin{proof}[Proof of Theorem~\ref{theo:offl_nonadapt}]
First note that by Assumption~\ref{as:lipgrad},
\begin{align*}
    f(x_{k},y^*) 
    &\leq f(x_{k-1},y^*) + \inner{\derivx{x_{k-1},y^*},x_{k}-x_{k-1}} + \frac{L_{GX}}{2}\norm{x_{k}-x_{k-1}}^2\\
    & = f(x_{k-1},y^*) + \gamma_k\inner{\derivx{x_{k-1},y^*},s_{k}-x_{k-1}} + \frac{L_{GX}\gamma_k^2}{2}\norm{s_{k}-x_{k-1}}^2.
    \end{align*}
Hence, by Assumption~\ref{as:crosslipgrad} and Lemma~\ref{lm: zeroOrder_helper1},
    \begin{align*}
   f(x_{k},y^*)  &\leq f(x_{k-1},y^*) + \gamma_k\inner{\derivx{x_{k-1},y_{k-1}},s_{k}-x_{k-1}} + \gamma_k D_\calX L_{XY} \norm{y_{k-1}-y^*}  + \frac{L_{GX}\gamma^2_k}{2}D^2_\calX \\ 
    & \leq f(x_{k-1},y^*) - \gamma_k\wh{g}_{k-1}^x + L_G\gamma^2 D^2_\calX + \gamma_k D_\calX L_{XY} \norm{y^* - y_{k-1}}  + \frac{1}{2L_G}\norm{\Delta_k^x}^2. \numberthis\label{eq:meritineq1}
\end{align*}
Similarly we have,
\begin{align*}
    -f(x^*,y_k) \leq -f(x^*,y_{k-1}) - \gamma_k\wh{g}_{k-1}^y + L_G\gamma^2 D^2_\calY + \gamma_k D_\calY L_{YX} \norm{x^* - x_{k-1}}  + \frac{1}{2L_G}\norm{\Delta_k^y}^2. \numberthis\label{eq:meritineq2}
\end{align*}
Adding \eqref{eq:meritineq1}, and \eqref{eq:meritineq2}, and using Assumption~\ref{as:strongconcon} we have
\begin{align*}
    w_{k} - w_{k-1} 
    &\leq -\gamma_k\wh{g}_{k-1} + \gamma_k (D_\calX L_{XY} \norm{y^* - y_{k-1}} + D_\calY L_{YX} \norm{x^* - x_{k-1}}) + L_G\gamma^2 (D^2_\calX + D^2_\calY)\\
    & \quad + \frac{1}{2L_G}(\norm{\Delta_k^x}^2 + \norm{\Delta_k^y}^2)\\
    &\leq -\gamma_k\wh{g}_{k-1} + \gamma_k \left(D_\calX L_{XY} \sqrt{\frac{2w_{k-1}^y}{\mu_y}} + D_\calY L_{YX} \sqrt{\frac{2w_{k-1}^x}{\mu_x}}\right) + L_G\gamma^2 (D^2_\calX + D^2_\calY) \\ &\quad+ \frac{1}{2L_G}(\norm{\Delta_k^x}^2 + \norm{\Delta_k^y}^2)\\
    &\leq -\gamma_k\wh{g}_{k-1} + 2\gamma_k \max\left\{ \frac{D_\calX L_{XY}}{\sqrt{\mu_\calY}},\frac{D_\calY L_{YX}}{\sqrt{\mu_\calX}}\right\}\sqrt{w_{k-1}} + L_G \gamma_k^2 (D^2_\calX + D^2_\calY) \\ &\quad + \frac{1}{2L_G}(\norm{\Delta_k^x}^2 + \norm{\Delta_k^y}^2)
\end{align*}
So by Lemma 19 in \cite{gidel2017frank}, we get 
\begin{align}
    w_k \leq w_{k-1} - C_0\gamma_k\wh{g}_{k-1} + \gamma^2_k C_1 + \frac{1}{2L_G}(\norm{\Delta_k^x}^2 + \norm{\Delta_k^y}^2) \label{eq:standard_form} 
\end{align}
Now we are ready to prove the two results. First we will prove the result (a) with non-adaptive step size. Because $w_k \leq \wh{g}_k$, \eqref{eq:standard_form} can be rewritten as 
\begin{align*}
w_{k} \leq \left(1-\frac{C_0\gamma_k}{2}\right)w_{k-1} - \frac{C_0\gamma_k}{2}\wh{g}_{k-1} + \gamma^2_kC_1 + \frac{1}{2L_G}\left(\norm{\Delta_k^x}^2 + \norm{\Delta_k^y}^2\right).
\end{align*}
By taking expectation on the both side, dividing by $\Gamma_k$, summing up and doing a telescoping argument, we have 
\begin{align*}
    C_0\sum_{k=1}^{N}\frac{\gamma_k}{2\Gamma_k}\E[\wh{g}_k] + \frac{\E[w_{k}]}{\Gamma_{k}}
    &\leq \E[w_0] - \frac{C_0}{2}\sum_{k=1}^{N}\frac{\gamma_k}{\Gamma_k}\E[w_{k-1}] + C_1\sum_{k=1}^{N}\frac{\gamma_k^2}{\Gamma_k} +\frac{1}{2L_G}\sum_{k=1}^{N}\frac{\E[\norm{\Delta_k^x}^2 + \norm{\Delta_k^y}^2]}{\Gamma_k}\\
    &\leq \E[w_0] + C_1\sum_{k=1}^{N}\frac{\gamma_k^2}{\Gamma_k}+\frac{1}{2L_G}\sum_{k=1}^{N}\frac{\E[\norm{\Delta_k^x} + \norm{\Delta_k^y}]}{\Gamma_k}.~~~\text{(as $C_0 > 0$)}
\end{align*}
\newline
Then by Lemma~\ref{lm:zerograd}, the choice of $\Gamma_0 = 1$ and the fact that $\sum_{k=1}^{N} \frac{\gamma_k\Gamma_{T}}{2\Gamma_k(1-\Gamma_{T})} = 1$, we have
\begin{align*}
    \E[w_N] + \E[\wh{g}_R] 
    &= \Gamma_{N}\left(\frac{\E[w_{N}]}{\Gamma_{N}}\right) + \frac{\Gamma_{N}}{C_0(1-\Gamma_{N})} \left(C_0\sum_{k=1}^{N}\frac{\gamma_k}{2\Gamma_k}\E[\wh{g}_k]\right)\\
    &\leq \frac{\Gamma_{N}}{C_0(1-\Gamma_{N})}\left(\frac{\E[w_{N-1}]}{\Gamma_{N-1}}\right) + \frac{\Gamma_{N}}{C_0(1-\Gamma_{N})}\left(C_0\sum_{k=1}^{N}\frac{\Gamma_k}{2\Gamma_k}\E[\wh{g}_k]\right)\\
    &\leq \frac{\Gamma_{N}}{C_0(1-\Gamma_{N})} \left[ w_0 + C_1\sum_{k=1}^{N}\frac{\gamma_k^2}{\Gamma_k} +\frac{1}{2L_G}\sum_{k=1}^{N}\frac{\E[\norm{\Delta_k^x} + \norm{\Delta_k^y}]}{\Gamma_k}  \right]\\
    & \leq \frac{\Gamma_{N}}{C_0(1-\Gamma_{N})} \left[ w_0 + C_1\sum_{k=1}^{N}\frac{\gamma_k^2}{\Gamma_k} + \left(\frac{2L_G}{T^2}+\frac{3L_G}{4T^2}\right)\left(B_x^{L\sigma}+B_y^{L\sigma}\right)\sum_{k=1}^{N} \frac{1}{\Gamma_k}\right] \\
    & = \frac{\Gamma_{N}}{C_0(1-\Gamma_{N})}w_0 + \frac{C_1\Gamma_{N-1}}{C_0(1-\Gamma_{N-1})}\sum_{k=1}^{N}\frac{\gamma_k^2}{\Gamma_k} + \frac{\left(\frac{2L_G}{N^2}+\frac{3L_G}{4N^2}\right)\left(B_x^{L\sigma}+B_y^{L\sigma}\right)}{C_0(1-\Gamma_{N-1})} \left(\Gamma_{N-1}  \sum_{k=1}^{N} \frac{1}{\Gamma_k}\right).
\end{align*}
The first inequality comes from the fact that $1-\Gamma_{N-1} < 1$ and $C_0 < 1$. Now, it is easy to verify that the following inequalities hold:
\begin{align*}
    \Gamma_k \leq \frac{60}{(k+3)(k+4)(k+5)}\qquad \frac{1}{1-\Gamma_N} &\leq 2 \qquad \Gamma_{N-1}  \sum_{k=1}^{N} \frac{1}{\Gamma_k} \leq N\\
    \sum_{k=1}^{N}\frac{\gamma_k^2}{\Gamma_k}\leq \sum_{k=1}^{N}\frac{3(k+3)}{5}& \leq \frac{3N(N+7)}{10}.
\end{align*}
Based on the above set of inequalities, we then have
\begin{align*}
    \E[w_k] + \E[\wh{g}_R]  \leq \frac{120 w_0}{(N+3)^3} + \frac{36L_GC_1}{C_0(N+5)} + \frac{11\left(\sqrt{(L_{GX})^2+\sigma^2}+\sqrt{(L_{GY})^2+\sigma^2}\right)}{2NC_0} : = \epsilon.
\end{align*}
Therefore we have $N = \order\left({1}/{\epsilon}\right)$ and hence the total number of call to stochastic zeroth-order oracle is 
\begin{align*}
    \sum_{k=1}^N (m_k^X + m_k^Y) 
    = \sum_{k=1}^N N^2 [B^{L\sigma}_X(d_X+5) + B^{L\sigma}_Y(d_Y+5)]
     = \order((d_X+d_Y)T^3) = \order(\frac{d_X+d_Y}{\epsilon^3}).
\end{align*}
Next, we prove the result (b) with an adaptive step-size choice for $\gamma_k$.
Using Lemma~\ref{lm:est_fwgap_error}, \eqref{eq:standard_form}, $C_0<1$ we get,
\begin{align*}
    w_k
    &\leq w_{k-1}-C_0\gamma_k g_{k-1} + \gamma^2_k C_1 + \frac{1}{2L_G}(\norm{\Delta_k^x}^2 + \norm{\Delta_k^y}^2) + C_0\gamma_k(D_\calX\norm{\Delta_k^x}+D_\calY\norm{\Delta_k^y}) \\
    &\leq w_{k-1}-C_0\gamma_k g_{k-1} + \gamma^2_k C_1 + \frac{1}{2L_G}(\norm{\Delta_k^x}^2 + \norm{\Delta_k^y}^2) + \gamma_k(D_\calX\norm{\Delta_k^x}+D_\calY\norm{\Delta_k^y}) \\    
    & \leq w_{k-1}-C_0\gamma_k g_{k-1} + 2\gamma^2_k C_1 + \frac{1}{L_G}(\norm{\Delta_k^x}^2 + \norm{\Delta_k^y}^2).
\end{align*}
Note that in Algorithm~\ref{algo:offline_SPFW} we set $\gamma_k = \min\{1, \frac{C_0}{4C_1}g_{k-1} \}$. So when $\frac{C_0}{4C_1}g_{k-1} < 1$, $\gamma_k =\frac{C_0}{4C_1}g_{k-1}$.  By Lemma~\ref{lm:est_fwgap_error} and Lemma 19 in \cite{gidel2017frank} we then obtain
\begin{align*}
    w_k
    &\leq w_{k-1}- \frac{C^2_0}{8C_1}(g_{k-1})^2 + \frac{1}{L_G}(\norm{\Delta_k^x}^2 + \norm{\Delta_k^y}^2)\\
    &\leq w_{k-1}- \frac{C^2_0}{8C_1}\left(\frac{1}{2}\hat{g}_{k-1}^2-\norm{\hat{g}_{k-1}-g_{k-1}}^2\right) + \frac{1}{L_G}(\norm{\Delta_k^x}^2 + \norm{\Delta_k^y}^2)\\
    &\leq w_{k-1}- \frac{C^2_0}{16C_1}\wh{g}_{k-1}^2+ \left(\frac{1}{4C_1}D^2_\calX + \frac{1}{L_G}\right)\norm{\Delta_k^x}^2 +\left(\frac{1}{4C_1}D^2_\calY + \frac{1}{L_G}\right)\norm{\Delta_k^y}^2 \\
    &\leq \left(1- \frac{C^2_0\delta^2_\mu}{8C_1}\right)w_{k-1}+\underbrace{ \left(\frac{1}{4C_1}D^2_\calX + \frac{1}{L_G}\right)\norm{\Delta_k^x}^2 +\left(\frac{1}{4C_1}D^2_\calY + \frac{1}{L_G}\right)\norm{\Delta_k^y}^2}_{T_2}.
\end{align*}
In the other case, when $\frac{C_0}{4C_1}g_{k-1} \geq 1$, $\gamma_k =1$, and we obtain,
\begin{align*}
    w_k
    &\leq w_{k-1}- C_0 g_{k-1} + \frac{C_0}{2}  g_{k-1} +  \frac{1}{L_G}(\norm{\Delta_k^x}^2 + \norm{\Delta_k^y}^2)\\
    &= w_{k-1}- \frac{C_0}{2}  g_{k-1} + \frac{1}{L_G}(\norm{\Delta_k^x}^2 + \norm{\Delta_k^y}^2)\\
    & \leq w_{k-1}- \frac{C_0}{2}  \wh{g}_{k-1} + \frac{C_0}{2} (D_\calX\sqrt{\norm{\Delta_k^x}^2}+D_\calY\sqrt{\norm{\Delta_k^y}^2}) +\frac{1}{L_G}(\norm{\Delta_k^x}^2 + \norm{\Delta_k^y}^2)\\
    & \leq (1-\frac{C_0}{2} ) w_{k-1} +\underbrace{ \frac{C_0}{2} (D_\calX\sqrt{\norm{\Delta_k^x}^2}+D_\calY\sqrt{\norm{\Delta_k^y}^2}) +\frac{1}{L_G}(\norm{\Delta_k^x}^2 + \norm{\Delta_k^y}^2)}_{T_3},
\end{align*}
where the last inequality follows by the fact that $w_k\leq \wh{g}_k$. Hence, by defining $\rho = 1- \min\left\{\frac{C^2_0\delta^2_\mu}{8C_1}, \frac{C_0}{2} \right\}$ for convenience, we can get
\begin{align*}
    &w_k \leq (1-\rho)w_{k-1} + T_2,~\text{when}~\gamma = \frac{C_0}{4C_1},\\
    &w_k \leq (1-\rho)w_{k-1} + T_3,~\text{when}~\gamma = 1.
\end{align*}
Now rearrange the inequality and taking expectation on the both side, and by Lemma~\ref{lm:zerograd}, we get
\begin{align*}
    &\E[w_k] - \frac{1}{\rho}\max\left\{\E[T_2],\E[T_3]\right\}
    \leq  (1-\rho)\left(\E[w_{k-1}] - \frac{1}{\rho}\max\left\{\E[T_2],\E[T_3]\right\}\right). 
\end{align*}
Therefore, we have
\begin{align*}
    \E[w_k] \leq (1-\rho)^T (\E[w_0] - \frac{1}{\rho}\max\{\E[T_2],\E[T_3]\}) + \frac{1}{\rho}\max\{\E[T_2],\E[T_3]\}.
\end{align*}
This implies the iterates decreases geometrically to reach point that is $\left(\frac{1}{\rho}\max\{\E[T_2],\E[T_3]\}\right)$-close to the saddle point. Notice that $\rho$ is a problem dependent constant and the magnitude of $T_2$ and $T_3$ depends on the $\Delta_k^{x[y]}$, which is the gradient estimation error. Hence by choosing the values of $m_k,v_k$ in the gradient estimation oracle as stated in~\eqref{eq:gammamtnutPRchoice_adp}, we get an $\epsilon$-optimal saddle saddle point, with the stated number of calls to the stochastic zeroth-order oracle and linear sub-problems.

\end{proof}

\bibliographystyle{alpha}
\bibliography{ns_saddle}
\newpage

\end{document}